\newtheorem{thm}{Theorem}[section]
\newtheorem{cor}[thm]{Corollary}
\newtheorem{lem}[thm]{Lemma}
\newtheorem{prop}[thm]{Proposition}
\theoremstyle{definition}
\newtheorem{definition}[thm]{Definition}
\newtheorem{remark}[thm]{Remark}
\newtheorem{example}[thm]{Example}
\newcommand{\bfx}{\boldsymbol{x}}
\DeclareMathOperator{\MLQ}{MLQ}
\DeclareMathOperator{\SMLQ}{MLQ^{\pm}}
\DeclareMathOperator{\Tab}{Tab}
\DeclareMathOperator{\wt}{wt}
\DeclareMathOperator{\skipped}{skip}
\DeclareMathOperator{\free}{free}
\DeclareMathOperator{\emp}{empty}
\DeclareMathOperator{\hook}{hook}
\DeclareMathOperator{\classic}{cl}
\DeclareMathOperator{\primed}{pr}
\DeclareMathOperator{\negative}{neg}
\DeclareMathOperator{\coinv}{coinv}
\DeclareMathOperator{\maj}{maj}
\DeclareMathOperator{\arm}{arm}
\DeclareMathOperator{\leg}{leg}
\DeclareMathOperator{\ball}{ball}
\DeclareMathOperator{\pair}{pair}
\DeclareMathOperator{\rev}{rev}
\DeclareMathOperator{\Span}{Span}
\DeclareMathOperator{\Pack}{Pack}
\DeclareMathOperator{\Supp}{Supp}
\newcommand{\xx}{\boldsymbol{x}}
\newcommand{\mcP}{\mathcal P}
\newcommand{\Ni}{\mathcal N^{(i)}}
\newcommand{\T}{\mathcal T}
\newcommand{\ZZ}{\mathbb Z}
\newcommand{\QQ}{\mathbb Q}
\newcommand{\QT}{\mathcal{T}}
\newcommand{\VV}{V_{\lambda}^*}
\newcommand{\G}{{Q^{\pm}}}
\newcommand{\mcG}{\mathcal{G}}
\newcommand{\DD}{D} 
\newcommand{\NN}{\mathbb N}
\newcommand{\YY}{\mathbb Y}
\newcommand{\SG}{S}
\newcommand{\hatfstar}[1]{\widehat{f^*_{#1}}}
\definecolor{green}{RGB}{43,92,47}
\definecolor{blue}{RGB}{40,68,104}
\definecolor{red}{RGB}{254, 113, 96}
\definecolor{purple}{RGB}{102,0,51}
\definecolor{gray}{RGB}{224,224,224}
\definecolor{lightpurple}{RGB}{255, 249, 242}
\definecolor{blue}{RGB}{40,68,104}
\newcommand\footnoteref[1]{\protected@xdef\@thefnmark{\ref{#1}}\@footnotemark}
\title{A combinatorial formula for Interpolation Macdonald polynomials}
\date{\today}
\author{Houcine Ben Dali}
\address{Department of Mathematics,
Harvard University, Cambridge, MA, and 
Center for Mathematical Sciences and Applications, Harvard University, Cambridge, MA}
\email{bendali@math.harvard.edu}
\author{Lauren Kiyomi Williams}
\address{Department of Mathematics,
Harvard University, Cambridge, MA}
\email{williams@math.harvard.edu}
\begin{document}

\begin{abstract}
In 1996, Knop and Sahi introduced a remarkable
family of inhomogeneous symmetric polynomials,  defined via vanishing conditions, whose top homogeneous parts are exactly the \emph{Macdonald polynomials}.  Like the Macdonald polynomials, 
these \emph{interpolation Macdonald polynomials} are closely connected to the Hecke algebra, and admit nonsymmetric versions, which generalize the nonsymmetric Macdonald polynomials.  In this paper we give a combinatorial formula for interpolation Macdonald polynomials in terms of \emph{signed multiline queues}; 
this formula generalizes the combinatorial formula for Macdonald polynomials in terms of multiline queues given by Corteel--Mandelshtam--Williams.
\end{abstract}

\maketitle

\begin{center}
\begin{minipage}{0.33\textwidth}
 \begin{center}
  \emph{Interpolation\\ Macdonald polynomials?\\ E-I-E-I-O!}
     \end{center}
\end{minipage}
\end{center}

\setcounter{tocdepth}{1}
\tableofcontents

\section{Introduction}

\subsection{Interpolation polynomials}
\emph{Macdonald polynomials}, introduced by Ian Macdonald in 1989 \cite{Macdonald1988}, are one of the most interesting families of polynomials in mathematics: they have connections to the geometry of the Hilbert scheme \cite{Haiman2001}, and admit various
beautiful combinatorial formulas in terms of tableaux~\cite{HaglundHaimanLoehr2005}, \emph{multiline queues}~\cite{CorteelMandelshtamWilliams2022}, and 
\emph{vertex models}~\cite{ABW}.
There is a fascinating inhomogeneous generalization  of Macdonald polynomials
called \emph{interpolation Macdonald polynomials}, introduced by Knop \cite{Knop1997b}
and Sahi \cite{Sahi1996} around 1996, and further studied in \cite{Okounkov1998,Olshanski2019}.
These polynomials are related to gauge theories and vertex operators \cite{CNO},  the HOMFLY polynomial \cite{Kameyama}, and the theory of link invariants of $\mathfrak{gl}_n$~\cite{BeliakovaGorsky2024}. In the Jack limit, interpolation polynomials were recently proved to be monomial-positive~\cite{NakviSahiSergel2023} and shown to be closely related to the theory of non-orientable combinatorial maps ~\cite{BenDaliDolega2023}.

The main result of this paper is a combinatorial formula for interpolation Macdonald polynomials. These polynomials can be  defined via vanishing conditions as in \cref{def:intMacdonald}.

Given a composition $\mu=(\mu_1,\dots,\mu_n)\in\NN^n$, we define
\begin{align} \label{eq:k}
k_i(\mu)&:=\#\{j:j<i \text{ and }\mu_j>\mu_i\}+\#\{j:j>i \text{ and }\mu_j\geq \mu_i\}, \text{ and }\\\label{eq:k2}
\widetilde\mu&:=\left(q^{\mu_1} t^{-k_1(\mu)},\dots,q^{\mu_n} t^{-k_n(\mu)}\right).
\end{align}
For example,  when $\mu=(4,2,0,1,4)$ we have
    $\widetilde \mu=(q^4t^{-1},q^2t^{-2},t^{-4},qt^{-3},q^4)$.

\begin{thm}\cite{Knop1997b, Sahi1996}\label{def:intMacdonald}
For each partition $\lambda=(\lambda_1,\dots,\lambda_n)$, there is a unique inhomogeneous symmetric polynomial $P^*_{\lambda} =P_{\lambda}^*(\xx;q,t) = P_{\lambda}^*(x_1,\dots,x_n;q,t)$ 
called the \emph{interpolation Macdonald polynomial} such that 
\begin{itemize}
    \item the coefficient $[m_{\lambda}]P_{\lambda}^*$ of the 
    monomial symmetric polynomial $m_{\lambda}$ in $P_{\lambda}^*$ is $1$,
    \item $P_{\lambda}^*(\tilde{\nu}) = 0$ for each partition $\nu \neq \lambda$ with $|\nu| \leq |\lambda|$.
\end{itemize}
Moreover, the top homogeneous component of 
$P_{\lambda}^*$ is the usual Macdonald polynomial
$P_{\lambda}.$
\end{thm}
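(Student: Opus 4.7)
The plan is to reduce the theorem to a single non-singularity claim in linear algebra, using the framework of non-symmetric interpolation polynomials pioneered by Knop and Sahi. Set $d = |\lambda|$ and let $V := \Lambda_n^{\leq d}$ denote the space of symmetric polynomials in $x_1,\dots,x_n$ of total degree at most $d$, with basis $\{m_\nu\}$ indexed by partitions $\nu$ satisfying $|\nu| \leq d$ and $\ell(\nu) \leq n$. Let $N = \dim V$, which equals the number of such $\nu$ and hence the number of evaluation points $\tilde\nu$. First I would show that the evaluation map $\mathrm{ev} : V \to \mathbb{C}^N$, $f \mapsto (f(\tilde\nu))_\nu$, is an isomorphism; given this, the $N-1$ vanishing conditions cut out a one-dimensional subspace $W \subset V$ spanned by a Lagrange-type element $L_\lambda$ characterized by $L_\lambda(\tilde\mu) = \delta_{\mu\lambda}$. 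Existence and uniqueness of $P_\lambda^*$ then reduce to showing $[m_\lambda] L_\lambda \neq 0$, in which case $P_\lambda^*$ is the unique scalar multiple of $L_\lambda$ whose coefficient of $m_\lambda$ is $1$.

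The hard part will be establishing these two facts simultaneously. The approach I would take is via the non-symmetric interpolation polynomials $E^*_\alpha$ indexed by compositions $\alpha$ with $|\alpha| \leq d$, constructed as simultaneous eigenfunctions of the Cherednik--Dunkl operators $\xi_i$ with joint eigenvalue $\tilde\alpha_i$. Their existence and triangularity in the monomial basis of the full polynomial ring $\mathbb{C}[x_1,\dots,x_n]$ follow from the separation of the joint spectra $\{\tilde\alpha\}$ over compositions of bounded size, which is a direct calculation from the definition of $\tilde\alpha$. I would then average the $E^*_\alpha$ with appropriate $t$-weights over compositions $\alpha$ with $\mathrm{sort}(\alpha) = \lambda$ to produce $P_\lambda^*$; the triangularity at the nonsymmetric level transfers to $V$ and yields both the non-singularity of the evaluation matrix $M = (m_\mu(\tilde\nu))$ and the nonvanishing $[m_\lambda] L_\lambda \neq 0$ in one stroke.

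Finally, to identify the top homogeneous part of $P_\lambda^*$ with $P_\lambda$, decompose $P_\lambda^* = g_d + g_{d-1} + \cdots + g_0$ with $g_k$ homogeneous of degree $k$. Since the top-degree symbols of the Cherednik--Dunkl operators $\xi_i$ are precisely the operators whose symmetrization is the Macdonald operator, the top-degree part of each $E^*_\alpha$ is the non-symmetric Macdonald polynomial $E_\alpha$; averaging then identifies $g_d$ with the symmetric Macdonald polynomial $P_\lambda$. The normalization $[m_\lambda] g_d = 1$ (which follows from $[m_\lambda] P_\lambda^* = 1$ because lower homogeneous pieces do not contribute to the degree-$d$ monomial $m_\lambda$) matches the standard monic normalization of $P_\lambda$, completing the identification.
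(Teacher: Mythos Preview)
The paper does not provide its own proof of this theorem: it is stated with the citation \cite{Knop1997b, Sahi1996} and treated throughout as a known background result. There is therefore nothing in the paper to compare your proposal against directly. Your outline is essentially the original Knop--Sahi strategy (non-symmetric $E^*_\alpha$ built as joint eigenfunctions of inhomogeneous Cherednik operators, triangularity giving non-singularity of the evaluation map, symmetrization to obtain $P^*_\lambda$, and top-degree comparison to recover $P_\lambda$), and is broadly correct as a sketch.

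A few places where your sketch would need tightening if you were to write it out in full: (i) the phrase ``average the $E^*_\alpha$ with appropriate $t$-weights'' hides the actual Hecke symmetrization $\sum_{\sigma} T_\sigma$ or equivalent, and one must check this lands in $\Lambda_n$ and preserves the vanishing conditions (the paper invokes exactly this via \cref{prop:symmetrization} for its ASEP polynomials); (ii) the separation of joint spectra $\{\tilde\alpha\}$ is not a completely trivial calculation and is where the genericity of $q,t$ enters; (iii) your identification of $g_d$ with $P_\lambda$ via top symbols of the $\xi_i$ is morally right but requires knowing that the inhomogeneous Cherednik operators degenerate correctly, which is the content of the ``moreover'' clause in \cref{thm:Knop_Sahi} and is the least obvious part of the whole story. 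None of these are gaps in the approach, only in the level of detail.
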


Recall that there are also \emph{nonsymmetric Macdonald polynomials} $E_{\mu}$, introduced by 
Cherednik \cite{cherednik-1995b}, associated to any composition
$\mu\in \NN^n$; these also have
interpolation analogues $E_{\mu}^*$ due to Knop and Sahi, see \cref{thm:Knop_Sahi}.  More recently the so-called 
\emph{ASEP polynomials} $f_{\mu}$ 
were introduced in connection to 
the \emph{asymmetric simple exclusion process} (ASEP), see 
\cite{cantini-degier-wheeler-2015, chen-degier-wheeler-2020}.
The ASEP polynomials are in fact
special cases
of the
\emph{permuted-basement Macdonald polynomials}
introduced in
\cite{ferreira-2011}, as shown in
\cite{CorteelMandelshtamWilliams2022}.

In this article we define 
\emph{interpolation ASEP polynomials} as in \cref{def:ASEP} below; they have the property that their top homogeneous component recovers the usual ASEP polynomials.  Our main result is a combinatorial formula for both the interpolation ASEP polynomials and the interpolation Macdonald polynomials, see \cref{thm:main}.

\begin{definition}\label{def:ASEP}
Fix a partition $\lambda$.
    For $\mu\in\SG_n(\lambda)$, the \emph{ASEP polynomial} $f_\mu$ is the homogeneous polynomial defined by
    $$f_\mu=T_{\sigma_\mu}\cdot E_\lambda,$$
    where 
$\sigma_{\mu}$ is the shortest permutation in $S_n$ such that 
$\sigma_{\mu}(\lambda)=\mu$, see \eqref{eq:action} and \eqref{def:Hecke} for the notation. In particular, $f_{\lambda} = E_{\lambda}.$
    
Similarly, we define the \emph{interpolation ASEP polynomial} $f^*$ by 
$$f^*_\mu:=T_{\sigma_\mu}\cdot E^*_\lambda.$$
In particular, $f^*_{\lambda} = E^*_{\lambda}.$
\end{definition}
Since the top homogeneous part of $E^*_\lambda$ is $E_\lambda$, we get that the top homogeneous part of $f^*_\mu$ is then the ASEP polynomial $f_\mu$.  In particular, the degree of $f^*_{\mu}$ is $|\mu|$.  In \cref{ssec:interpolation_ASEP}, we give a characterization of interpolation ASEP polynomials with vanishing conditions.

\subsection{Multiline queues and signed multiline queues}

Let $\lambda=(\lambda_1,\dots, \lambda_n)$
with $\lambda_1\geq\dots\geq\lambda_n\geq 0$ be a partition. We can
describe such a partition by its vector of types
$\mathbf{m}=(m_0, m_1, \dots, m_L)$, where
$m_i=\#\{j:\lambda_j=i\}$,
and $L$ is the largest part that occurs.
Sometimes we denote our partition by
$\lambda=\langle L^{m_L}, \dots, 1^{m_1}, 0^{m_0} \rangle$.
We have $\sum_{i=0}^L m_i=n$.

\begin{definition}\label{def:MLQ0}
Fix a partition
$\lambda=\langle L^{m_L}, \dots, 1^{m_1}, 0^{m_0} \rangle$
        as above, with $\sum_{i=0}^L m_i=n$.
A \emph{ball system} $B$ of type $\lambda$
is an $L \times n$ array, with rows labeled from bottom to top as $1, 2, \dots, L$,
and columns labeled from left to right from $1$ to $n$,
in which each of the $Ln$ positions is either empty or occupied by a ball, and such that
there are
        $m_L+ m_{L-1} + \dots + m_r$ balls in row $r$.
        We label each ball with an element of $\{1, \dots, L\}$ (viewing
        empty spots as $0$), such that:
        \begin{itemize}
                \item in row $r$ our configuration of balls gives a permutation of
                        $$\lambda^{(r)}:=\langle L^{m_L}, (L-1)^{m_{L-1}}, \dots, r^{m_r},  0^{m_{r-1} + \dots + m_0} \rangle.$$
        \end{itemize}
\end{definition}
\cref{def:MLQ} comes from \cite{CorteelMandelshtamWilliams2022}, and is a slight variant of a definition from \cite{Martin}.
\begin{definition}\label{def:MLQ}
A \emph{multiline queue} (or MLQ) of \emph{type} 
$\mu\in S_n(\lambda)$ is a ball system of type $\lambda$ such that 
each ball  in row $r>1$ is paired with a ball  of the same label
in the row below it, and the configuration of balls on the bottom row is $\mu$.  We require that the set of pairings between row $r$ and $r-1$ form a \emph{classic layer}, i.e. satisfy the following rules:
\begin{itemize}
 \item  We pair two balls using a shortest strand
that travels either straight down or from left to right, allowing the
strand to wrap around the cylinder if necessary;
\item  In row $r$, each  ball with label $a$ has either an empty spot below it,
           or a ball with label $a'$, where $a' \geq a$, and if $a=a'$, they must be trivially paired,
                i.e. paired to each other with a straight segment.
\end{itemize}
Let $\MLQ(\mu)$ denote the set of multiline queues of type $\mu$.
\end{definition}
See \Cref{fig:MLQ_example} for an example.
\begin{figure}[!ht]
  \centerline{\includegraphics[height=1.2in]{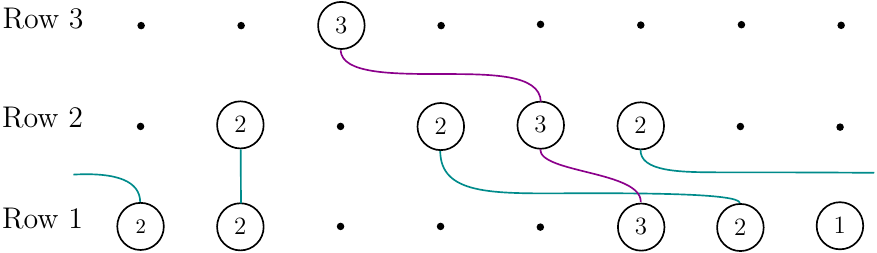}}
\centering
 \caption{
A multiline queue of type $(2,2,0,0,0,3,2,1)$.}
\label{fig:MLQ_example}
 \end{figure}

        \begin{definition}
An \emph{enhanced ball system} $B$ of type $\lambda$
is a $2L \times n$ array, with rows labeled from bottom to top as $1, 1', 2, 2', \dots, L,L'$,
and columns labeled from left to right from $1$ to $n$,
in which each of the $2Ln$ positions is either empty or occupied by a ball, and such that
there are
        $m_L+ m_{L-1} + \dots + m_r$ balls in each of rows $r$ and $r'$.
        Moreover:
        \begin{itemize}
                \item[(a)] in row $r$ our balls are labeled by $\{1,2,\dots,L\}$ (we call them \emph{regular balls}) and the configuration of balls gives a permutation of
                        $$\lambda^{(r)}:=\langle L^{m_L}, (L-1)^{m_{L-1}}, \dots, r^{m_r},  0^{m_{r-1} + \dots + m_0} \rangle$$
                \item[(b)] in row $r'$ our balls are labeled by $\{\pm 1, \dots, \pm L\}$ (we call them \emph{signed balls}) and the configuration of balls gives a \emph{signed} permutation of
                        $$\lambda^{(r)}=\langle L^{m_L}, (L-1)^{m_{L-1}}, \dots, r^{m_r},  0^{m_{r-1} + \dots + m_0} \rangle$$
        \end{itemize}
        A signed ball with a positive (respectively negative) label will be called \emph{a positive ball} (respectively \emph{a negative ball}). 
\end{definition}

\begin{definition}  \label{def:ghostMLQ}
A \emph{signed multiline queue} $\G$ of type $\mu\in S_n(\lambda)$ is an enhanced ball system of type $\lambda$
such that each ball in a row above row $1$ is paired with
a ball  of
the same absolute value in the row below it,
and the configuration of balls on the bottom row is $\mu$.
We require that, if we consider only the absolute values of the ball labels, then 
the pairings between row $r$ and row $(r-1)'$ form 
a classic layer, as in \cref{def:MLQ}, and we call them
\emph{classic pairings}.
And we require that the pairings between row $r'$ and row $r$ form a \emph{signed layer}, i.e. satisfy the following rules (and we call them \emph{signed pairings}):
\begin{itemize}
    \item[(a')] Each pairing connects two balls with a shortest strand that travels either straight down or from left to right, and does not wrap around;
    \item[(b')] In row $r'$, each positive ball with label $a\in \ZZ^+$
            must always have a ball labeled $a'$ underneath it, where $a' \geq a$, and if $a'=a$, the two balls must be trivially paired;
    \item[(c')] In row $r'$, each negative ball with label $-a$ (for $a\in \ZZ^+$) has either an empty spot below it or a ball with label $a'$, where $a \geq a'$.
           \end{itemize}
Let $\SMLQ(\mu)$ denote the set of signed multiline queues of type $\mu$.
\end{definition}

\begin{figure}[t]
    \centering
    \begin{subfigure}{0.3\textwidth}
        \centering
        \includegraphics[height=2.8cm]{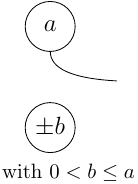}
        \subcaption{}
        \label{fig:forbidden_configurations_classic}
    \end{subfigure}
    \begin{subfigure}{0.6\textwidth}
        \centering
        \includegraphics[height=2.8cm]{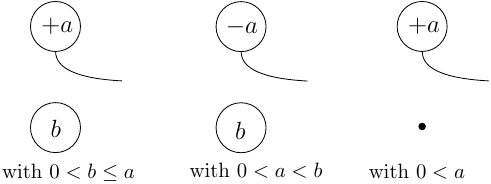}
        \subcaption{}
        \label{fig:forbidden_configurations_ghost}
    \end{subfigure}
    \caption{\Cref{fig:forbidden_configurations_classic} illustrates the forbidden configurations for the classic layers, while the three other figures (\Cref{fig:forbidden_configurations_ghost}) show the forbidden configurations for the signed layers. The three figures on the left show two balls on top of each other, which are not trivially paired, whereas the rightmost figure features a regular ball with an empty position beneath it.}
    \label{fig:forbidden_configurations}
\end{figure}
In \Cref{fig:forbidden_configurations_classic} and \Cref{fig:forbidden_configurations_ghost}
we illustrate the forbidden configurations in the classic and signed layers, respectively.

See \Cref{fig:GMLQ_example} for an example of a signed multiline queue.

\begin{figure}[!ht]
  \centerline{\includegraphics[height=2in]{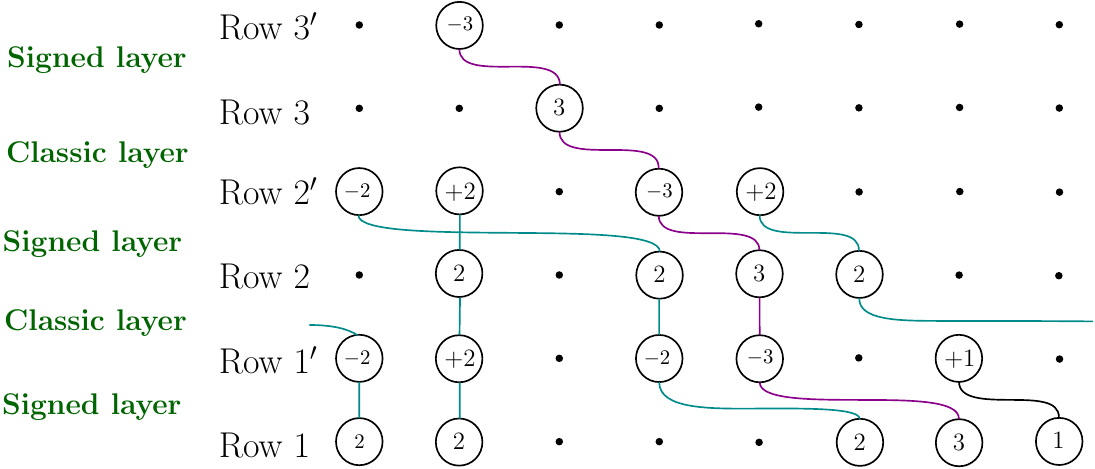}}
\centering
 \caption{
A signed multiline queue of type $(2,2,0,0,0,2,3,1)$.
}
\label{fig:GMLQ_example}
 \end{figure}

\begin{remark}
In \cite{Martin}, multiline queues were given an interpretation in terms of ``priority queues,'' with balls at each level representing customers who are each seeking a service on the level below. 
We can also interpret our signed multiline queues as follows.  The signed balls all represent customers, with the positive balls having ``polite'' and ``attractive'' characteristics and the negative balls having ``needy'' characteristics.  The regular balls all represent services; they are always ``polite''.  We will work our way from top to bottom of the multiline queue: in Row $r'$ the customers seek a service from Row $r$ below, and in Row $r$ the services seek a customer from Row $(r-1)'$ below. When we construct pairings between two adjacent rows, we will start by examining the balls of largest absolute value in the higher row; if we need to break ties, we will work from right to left. The polite customers and services can choose any unused service/customer below, except that if there is an unused service immediately underneath them, politeness dictates that they must accept it; this explains the leftmost two diagrams in 
\Cref{fig:forbidden_configurations}.  When we construct the pairings on a signed layer, because the negative balls/customers are so needy, 
no ball (positive or negative) dares to pair with an unused service that is immediately below a negative customer who has not yet accepted a service; 
this explains the third diagram in \Cref{fig:forbidden_configurations}. Since the positive balls/customers are attractive, there is always a service to be found just underneath them (though it may be taken already); this explains the fourth diagram in 
\Cref{fig:forbidden_configurations}.  Finally, pairings initiated by services from Row $r$ can wrap around, because servers ``know the building''; however, pairings initiated by customers cannot.  
\end{remark}

\subsection{Combinatorial formulas for ASEP and interpolation ASEP polynomials}

In this section we define weights for both multiline and signed multiline queues.  We then use them to review the combinatorial formula for ASEP polynomials and give a new combinatorial formula for interpolation ASEP polynomials.

\begin{definition}\label{def:wt0}
Let $Q$ be a multiline queue.  If the balls in row $r$ form the composition $\mu=(\mu_1,\dots,\mu_n)$,
we define the \emph{ball-weight} of row $r$ and of $Q$ to be 
\begin{equation}\label{eq:ball0}
\wt_{\ball}(r) = \prod_{i: \mu_i>0} x_i 
\hspace{.5cm} \text{ and } \hspace{.5cm}
\wt_{\ball}(Q) = \prod_{r=1}^L \wt_{\ball}(r).
\end{equation}

We also define the \emph{pairing-weight} $\wt_{\pair}(Q)$ of $Q$ by associating a weight to each nontrivial pairing $p$ of balls. 
Consider the pairings  
in a (necessarily) classic layer
connecting balls in row $r$ and row $(r-1)$.  Their weights are computed via the following \emph{pairing order}. 
We read the balls in row $r$ in decreasing order of their label; 
within a fixed label, we read the balls from right to left.
As we read the balls in this order, we imagine placing the strands pairing the balls one by one.  The balls in row $(r-1)$ that have not yet been matched right before we place $p$ are considered  \emph{free} for $p$. If pairing $p$ matches a ball labeled $a$ in row $r$ and column $j$ to a ball
in row $(r-1)$ and column $j'$, then the free balls in row $(r-1)$ and columns $j+1, j+2,\dots, j'-1$  (indices considered modulo $n$) are considered \emph{skipped}. (When pairing balls of label $a$ between rows $r$ and $(r-1)$, trivially paired balls of label $a$ in row $(r-1)$ are not considered free.)  We then associate to pairing $p$ the weight
\begin{equation}\label{eq:pair}
\wt_{\pair}(p) = \begin{cases}
\frac{(1-t) t^{\skipped(p)}}{1-q^{a-r+1}t^{\free(p)}}\cdot q^{a-r+1}
&\mbox {if $j'<j$}\\
\frac{(1-t) t^{\skipped(p)}}{1-q^{a-r+1}t^{\free(p)}}
&\mbox{if $j'>j$}.
\end{cases}
\end{equation}
Note that the  factor $q^{a-r+1}$ appears precisely when
the pairing wraps around the cylinder.

Having associated a weight to each nontrivial pairing,
we define $$\wt_{\pair}(Q) = \prod_p \wt_{\pair}(p),$$
where the product is over all nontrivial pairings of balls in $Q$.

Finally the \emph{weight} of $Q$ is defined to be
$$\wt(Q) = \wt_{\ball}(Q) \wt_{\pair}(Q).$$
\end{definition}

\begin{definition}\label{def:Fmu0}
Let $\mu = (\mu_1,\dots, \mu_n) \in \{0,1,\dots, L\}^n$ be a composition with largest part $L$.
We set
\[
F_{\mu} = F_{\mu}(x_1,\dots,x_n; q, t)  = F_{\mu}({\xx}; q, t) 
 = \sum_{Q\in \MLQ(\mu)} \wt(Q).\]
 
Let $\lambda = (\lambda_1 \geq \lambda_2 \geq \dots \geq \lambda_n \geq 0)$
be a partition with $n$ parts and largest part $L$.
We set
$$Z_{\lambda} = Z_{\lambda}(x_1,\dots,x_n; q, t) = 
        Z_{\lambda}({\xx}; q, t) = \sum_{\mu\in S_n(\lambda)} F_{\mu}(x_1,\dots,x_n; q, t).$$
        We call
$Z_{\lambda}$ the \emph{combinatorial partition function} for multiline queues.
\end{definition}

\begin{thm}
\label{thm:main0}\cite{CorteelMandelshtamWilliams2022}
Let $\mu\in \NN^n$ be a  composition, and let $\lambda$ be a partition.  Then the ASEP polynomial $f_{\mu}$ equals the 
weight-generating function $F_{\mu}$ for multiline queues of type $\mu$. And the Macdonald polynomial $P_{\lambda}(\xx; q, t)$
        is equal to
 the combinatorial partition function $Z_{\lambda}(\xx; q, t)$ for  multiline queues.
\end{thm}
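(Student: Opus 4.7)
My plan is to prove the theorem in two stages: first establish the nonsymmetric identity $f_\mu = F_\mu$ for all $\mu \in S_n(\lambda)$, and then deduce $P_\lambda = Z_\lambda$ by summing over the orbit. For the nonsymmetric statement I would induct on the length of $\sigma_\mu$. The base case is $\mu = \lambda$, where I must show $F_\lambda = E_\lambda$. Here I propose a secondary induction on the largest part $L$ of $\lambda$: when $L=0$ both sides equal $1$, and for the inductive step I would use a Knop--Sahi cyclic/raising operator $\Psi$ satisfying an identity of the form $\Psi \cdot E_{(\lambda_2, \ldots, \lambda_n, \lambda_1 - 1)} = E_\lambda$ (when $\lambda_1 \geq 1$), and identify its combinatorial counterpart as stacking a new top row onto a multiline queue of smaller type. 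The weight contributions of the new row and the rational factor produced by $\Psi$ must match, which reduces to an identity about pairings from the new top row down to the former top row of the smaller MLQ.

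For the inductive step, suppose $\mu_i < \mu_{i+1}$, so that $\sigma_{s_i \mu} = s_i \sigma_\mu$ is a reduced expression of length $\ell(\sigma_\mu)+1$ and hence $f_{s_i \mu} = T_i \cdot f_\mu$ by definition. I would prove the matching identity $T_i \cdot F_\mu = F_{s_i \mu}$ by constructing a weight-preserving correspondence between $\MLQ(\mu)$ and $\MLQ(s_i \mu)$, essentially swapping the ball contents of columns $i$ and $i+1$ of the bottom row and propagating the change upward through the pairings. The main obstacle is precisely this exchange relation: the pairing-weight formula \eqref{eq:pair} is sensitive to the global reading order, and swapping columns $i$ and $i+1$ of the bottom row can cascade through higher rows, altering the sets of free and skipped balls for many pairings. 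Verifying that these cascading changes telescope to exactly reproduce the two-term divided-difference action of $T_i$ (with its characteristic factor of the form $\frac{tx_i-x_{i+1}}{x_i-x_{i+1}}$) will require a careful case analysis based on the configuration of balls in columns $i$ and $i+1$ of each row, the direction of each pairing (wrap-around or not), and the presence of trivial pairings; this is where the bulk of the technical work will concentrate.

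Once $F_\mu = f_\mu$ is established for all $\mu \in S_n(\lambda)$, the symmetric statement follows by summing the combinatorial formula over the orbit. Indeed, in the normalization used here one has $P_\lambda = \sum_{\mu \in S_n(\lambda)} f_\mu$, expressing the symmetric Macdonald polynomial as the Hecke-symmetrization of $E_\lambda$ aggregated over the distinct compositions in the orbit of $\lambda$. Combined with the nonsymmetric identity, this yields $Z_\lambda = \sum_\mu F_\mu = \sum_\mu f_\mu = P_\lambda$ directly from \cref{def:Fmu0}.
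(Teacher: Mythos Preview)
This theorem is not proved in the present paper; it is quoted from \cite{CorteelMandelshtamWilliams2022}, and the only information the paper gives about the original argument is the Remark after \cref{thm:main}: the proof in \cite{CorteelMandelshtamWilliams2022} rests on the circular symmetry
\[
q^{\mu_n} f_{\mu}(x_1,\dots,x_n;q,t)=f_{(\mu_n,\mu_1,\dots,\mu_{n-1})}(qx_n,x_1,\dots,x_{n-1};q,t)
\]
for the ASEP polynomials, together with the Hecke relations. Your outline is in the same spirit but contains two genuine problems.

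First, your base case does not close. You propose to prove $F_\lambda=E_\lambda$ for partitions $\lambda$ by induction on the largest part, invoking a Knop--Sahi raising operator with $\Psi\cdot E_{(\lambda_2,\dots,\lambda_n,\lambda_1-1)}=E_\lambda$. To feed this you need $F_\nu=E_\nu$ for $\nu=(\lambda_2,\dots,\lambda_n,\lambda_1-1)$, which is almost never a partition. But the theorem asserts $F_\nu=f_\nu$, and for non-partition $\nu$ one has $f_\nu\neq E_\nu$; so neither the outer hypothesis (which only covers partitions) nor the statement you are trying to prove supplies this input. The fix used in \cite{CorteelMandelshtamWilliams2022} is to work entirely at the level of $f_\mu$: the cyclic shift above is an identity among $f$'s (not $E$'s), and it has a direct combinatorial counterpart on multiline queues (rotating columns on the cylinder and shifting the $x$-variables). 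Combined with the Hecke relations, it characterizes the $f_\mu$ uniquely from $f_{(0,\dots,0)}=1$, so one verifies both recursions for $F_\mu$ and concludes.

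Second, your inductive step has the inequality reversed. If $\mu_i<\mu_{i+1}$ then $s_i\mu$ is \emph{closer} to $\lambda$, so $\ell(\sigma_{s_i\mu})=\ell(\sigma_\mu)-1$; and in this case \cref{prop:T_i-f_star} (via \cref{rem:ASEPpoly}) gives $T_i f_\mu=(t-1)f_\mu+t f_{s_i\mu}$, not $f_{s_i\mu}=T_i f_\mu$. The case you want for advancing the induction is $\mu_i>\mu_{i+1}$, where indeed $T_i f_\mu=f_{s_i\mu}$ and $\ell$ goes up. This is easily repaired, but the matching combinatorial statement you would then need is $T_i F_\mu=F_{s_i\mu}$ whenever $\mu_i>\mu_{i+1}$, which is exactly the exchange relation proved in \cite{CorteelMandelshtamWilliams2022}; note that it is not a bijection $\MLQ(\mu)\to\MLQ(s_i\mu)$ but a weighted identity, as you acknowledge. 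Once you replace the raising-operator step by the cyclic-shift step for $f$ and $F$, your plan becomes essentially the approach of \cite{CorteelMandelshtamWilliams2022}.
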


Our goal is now to give an analogue
of \cref{thm:main0} for interpolation
 polynomials.

\begin{definition}\label{def:wt}
Let $\G$ be a signed multiline queues.  If the balls in row $r'$ form the signed composition $\alpha=(\alpha_1,\dots,\alpha_n)$,
we define the \emph{shifted ball-weight} of row $r'$ to be 
\begin{equation}\label{eq:ball}
\wt_{\ball}(r') = \left(\prod_{i: \alpha_i>0} x_i \right) \left(\prod_{i: \alpha_i<0} \frac{-q^{r-1}}{t^{n-1}} \right)
\end{equation}
and we define the \emph{shifted ball-weight} of $\G$ to be 
\begin{equation}
    \wt_{\ball}(\G) = \prod_{r=1}^L \wt_{\ball}(r').
\end{equation}
In other terms, we assign to a  ball in column $i$ and row $r'$ the weight $x_i$ if it is a positive ball and the weight $\frac{-q^{r-1}}{t^{n-1}}$ if it is a negative ball.

We also define the \emph{pairing-weight} $\wt_{\pair}(\G)$ of $\G$ by associating a weight to each nontrivial pairing $p$ of balls. 
For the pairings in a classic layer
connecting balls in row $r$ and row $(r-1)'$, we use the weighting scheme given in \eqref{eq:pair}, where we ignore the signs on ball labels and only work with the absolute value.

For the pairings in a signed layer connecting balls in row $r'$ and row $r$, we read the balls in row $r'$ in decreasing order of the absolute value of their label; within a fixed absolute value, we read the balls from right to left.  Reading the balls in this order, we place the strands pairing the balls one by one.  The balls in row $r$ that have not yet been matched  are \emph{free}.  If pairing $p$ matches a ball labeled $\pm a$ in row $r'$ and column $j$ to a ball labeled $a$ in row $r$ and column $k>j$, then the free balls (respectively, empty positions) in row $r$ and columns $j+1,j+2,\dots,k-1$
are  \emph{skipped} (respectively, \emph{empty}).  We then set 
\begin{equation}\label{eq:pair2}
\wt_{\pair}(p) = \begin{cases}
         (1-t) t^{\skipped(p)+\emp(p)} &\text{ if $p$ connects a positive ball and a regular ball}\\
         -(1-t) t^{\skipped(p)+\emp(p)} &\text{ if $p$ connects a negative ball and a regular ball.}
\end{cases}
\end{equation}

Having associated a weight to each nontrivial pairing,
we define $$\wt_{\pair}(\G) = \prod_p \wt_{\pair}(p),$$
where the product is over all nontrivial pairings of balls in $\G$.

Finally the \emph{weight} of $\G$ is defined to be
$$\wt(\G) = \wt_{\ball}(\G) \wt_{\pair}(\G).$$
\end{definition}

\begin{remark}
    If all the balls in our signed multiline queue are regular, i.e all labels are in $\NN_+$, then it follows from items (a') and (b') of \cref{def:ghostMLQ} that all the ghost pairings are trivial. As a consequence, the contribution of the signed layers to the pairing weight of the system is 1. We can then remove these layers and keep only the classic ones; the definition of signed multiline queue then reduces to \cref{def:MLQ}.
\end{remark}

\begin{example}
In \Cref{fig:GMLQ_example}, the ball-weight of $\G$ is 
$$\left(\frac{-q^2}{t^7}\right) \cdot x_2 x_5 \left(\frac{-q}{t^7}\right)^2
\cdot x_2 x_7 \left(\frac{-1}{t^7}\right)^3.$$
Meanwhile, the weights of the nontrivial pairings are as follows (reading from left to right):
\begin{itemize}
    \item From Row $3'$ to Row $3$: $-(1-t)$
    \item From Row $3$ to Row $2'$: $\frac{1-t}{1-qt^4}$
    \item From Row $2'$ to Row $2$: $-t(1-t)$, $-(1-t)$, and $(1-t)$
    \item From Row $2$ to Row $1'$: $\frac{(1-t)t}{1-qt^2} \cdot q$
    \item From Row $1'$ to Row $1$: $-t(1-t)$, $-t(1-t)$, and $(1-t)$.
        \end{itemize}
Thus, multiplying all of these weights, we obtain 
$$\wt(\G) = -x_2^2 x_5 x_7 \frac{q^5(1-t)^9}{t^{38}(1-qt^2)(1-qt^4)}.$$
\end{example}

Notice that in signed multiline queues, the pairing weights do not depend on the signs of the labels, only on their absolute value. However, the signs play an important role in the forbidden configurations and the ball weights.

We now define the weight-generating function for signed multiline queues
of a given type, as well as the \emph{combinatorial partition function}
for signed multiline queues.
\begin{definition}\label{def:Fmu}
Let $\mu = (\mu_1,\dots, \mu_n) \in \{0,1,\dots, L\}^n$ be a composition with largest part $L$.
We set
\[
F^*_{\mu} = F^*_{\mu}(x_1,\dots,x_n; q, t)  = F^*_{\mu}({\xx}; q, t) 
 = \sum_{\G \in \SMLQ(\mu)} \wt(\G).\]

Let $\lambda = (\lambda_1 \geq \lambda_2 \geq \dots \geq \lambda_n \geq 0)$
be a partition with $n$ parts and largest part $L$.
We set
$$Z^*_{\lambda} = Z^*_{\lambda}(x_1,\dots,x_n; q, t) = 
        Z^*_{\lambda}({\xx}; q, t) = \sum_{\mu\in S_n(\lambda)} F^*_{\mu}(x_1,\dots,x_n; q, t).$$
        We call
$Z^*_{\lambda}$ the \emph{combinatorial partition function} for signed multiline queues.
\end{definition}

\begin{thm}[Main theorem]
\label{thm:main}
Let $\mu$ be a  composition, and let $\lambda$ be a partition.  Then the interpolation ASEP polynomial $f_{\mu}^*$ equals the 
weight-generating function $F_{\mu}^*$ for signed multiline queues of type $\mu$. And the interpolation Macdonald polynomial $P^*_{\lambda}(\xx; q, t)$
        is equal to
 the combinatorial partition function $Z^*_{\lambda}(\xx; q, t)$ for signed multiline queues.
\end{thm}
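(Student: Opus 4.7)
The plan is to prove the nonsymmetric statement $F^*_\mu = f^*_\mu$ first, and deduce the symmetric identity $Z^*_\lambda = P^*_\lambda$ by summing over $\mu \in S_n(\lambda)$ and invoking an interpolation analogue of the symmetrization identity $\sum_{\mu \in S_n(\lambda)} f_\mu = P_\lambda$ that underlies $Z_\lambda = P_\lambda$ in the proof of \cref{thm:main0}. For the nonsymmetric half, the approach mirrors Corteel--Mandelshtam--Williams and splits into two parts: an intertwining recurrence with the Hecke generators, and a base case at the dominant composition.

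For the recurrence, I would establish the combinatorial identity
\[
T_i \cdot F^*_\mu = F^*_{s_i \mu}
\]
for every composition $\mu$ and every simple reflection $s_i$ with $\ell(\sigma_{s_i \mu}) = \ell(\sigma_\mu) + 1$. The natural proof is a row-by-row, local, weight-preserving bijection obtained by transposing columns $i$ and $i+1$ in the bottom row and propagating the swap upward through the classical and signed layers. For each pair of adjacent rows, the verification reduces to a finite list of local configurations in columns $i, i+1$; in the classical layers this is the CMW argument applied verbatim. Each signed layer contributes a handful of new local cases (one type coming from the pair of signed balls in row $r'$, one from the regular-ball pair in row $r$), in which the $t^{\skipped}$ and $t^{\emp}$ statistics interchange and the $\pm(1-t)$ weights from positive versus negative pairings recombine so that the net transformation matches the action of $T_i$. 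Combined with $f^*_\mu = T_{\sigma_\mu} \cdot E^*_\lambda$, this reduces the nonsymmetric theorem to the base case $F^*_\lambda = E^*_\lambda$.

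For the base case, I would appeal to the Knop--Sahi characterization of $E^*_\lambda$ by vanishing and normalization (\cref{thm:Knop_Sahi}). The normalization $[m_\lambda] F^*_\lambda = 1$ comes from the unique signed multiline queue in $\SMLQ(\lambda)$ in which every ball is positive and stacked in its dominant column, where every classical pairing is trivial and every signed pairing contributes $1$. The vanishing $F^*_\lambda(\tilde\eta) = 0$ for $|\eta| \leq |\lambda|$ and $\eta \neq \lambda$ would be proven by a sign-reversing involution on $\SMLQ(\lambda)$ that exploits the $x_j$ versus $-q^{r-1}/t^{n-1}$ dichotomy between positive and negative balls: under the evaluation $x_j \mapsto q^{\eta_j} t^{-k_j(\eta)}$, a distinguished signed ball whose positive and negative weights agree up to a sign can be identified by scanning columns in a canonical order, and toggling the sign of that ball produces a fixed-point-free involution whose weight contributions cancel.

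The main obstacle will be constructing and verifying this sign-reversing involution: the distinguished toggling must be compatible with the forbidden configurations of \cref{def:ghostMLQ} and with the pairing-weight bookkeeping of \eqref{eq:pair2}, and one must control how the ``extra'' negative balls created or destroyed redistribute the $t^{\skipped + \emp}$ factors in neighboring pairings. A fallback, if the involution proves delicate, is to characterize $F^*_\mu$ inductively by (i) its top homogeneous component, which equals $f_\mu$ by \cref{thm:main0} after checking that the top-degree contributions to $F^*_\mu$ come exclusively from signed multiline queues with no negative balls, together with (ii) the Hecke recurrence above. Then Knop--Sahi uniqueness gives the base case without a direct vanishing check, and the theorem follows by induction on $\ell(\sigma_\mu)$.
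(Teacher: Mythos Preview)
Your strategy diverges substantially from the paper's, and the divergence exposes a real gap.

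The paper does \emph{not} induct on $\ell(\sigma_\mu)$ with base case $F^*_\lambda = E^*_\lambda$. Instead it inducts on the largest part $L$ of $\mu$, with trivial base case $F^*_{(0,\dots,0)}=1=f^*_{(0,\dots,0)}$. The inductive step peels off the bottom two rows of a signed MLQ: combinatorially this yields a decomposition $F^*_\mu = \sum_\lambda F^{*\lambda}_\mu\, q^{|\lambda^-|} F^*_{\lambda^-}(x/q)$, and the paper establishes the matching algebraic decomposition $f^*_\mu = \sum_\lambda F^{*\lambda}_\mu\, q^{|\lambda^-|} f^*_{\lambda^-}(x/q)$ in two stages. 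First, for \emph{packed} $\mu$ it proves such a recursion directly from Knop's extra vanishing theorem (\cref{lem:extravanishing}), not from any involution. Second, it propagates from packed to general $\mu$ by Hecke operators, but applied to the \emph{coefficients} $G^\alpha_\mu$ of a single signed layer (\cref{prop:recursion}), via auxiliary ``extended ASEP polynomials'' $f_\alpha$ indexed by signed compositions. The paper explicitly remarks that the CMW proof template does not transfer, because interpolation ASEP polynomials lack the cyclic symmetry $q^{\mu_n} f_\mu(\xx) = f_{(\mu_n,\mu_1,\dots)}(qx_n,x_1,\dots)$ that CMW used to connect different shapes.

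This is exactly where your proposal breaks. Granting you the Hecke recurrence $T_i F^*_\mu = F^*_{s_i\mu}$ (which is plausible, and whose one-layer shadow is the substantial case analysis of \cref{prop:recursion}), you are still left with an independent base case $F^*_\lambda = E^*_\lambda$ for \emph{every} partition $\lambda$. In CMW the analogous tower of base cases is collapsed to $\lambda=\emptyset$ by the cyclic/raising operator; here that tool is unavailable. Your sign-reversing involution for the vanishing $F^*_\lambda(\tilde\eta)=0$ is not constructed, and there is no indication that toggling a single ball's sign can be made compatible with the forbidden configurations and with the neighboring pairing weights simultaneously. More seriously, your fallback does not close the gap: ``top homogeneous part equals $E_\lambda$'' together with ``$T_i$ acts correctly within the orbit $S_n(\lambda)$'' does \emph{not} invoke any Knop--Sahi uniqueness statement. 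Knop--Sahi uniqueness (\cref{thm:Knop_Sahi}, \cref{thm:characterization}) is a vanishing-condition characterization, and neither of your two inputs supplies vanishing. Concretely, for any partition $\rho$ with $|\rho|<|\lambda|$, the polynomial $E^*_\lambda + c\,P^*_\rho$ has top part $E_\lambda$ and, being symmetric in every pair $x_i,x_{i+1}$ with $\lambda_i=\lambda_{i+1}$, satisfies the same Hecke relations at the dominant vertex; so those two pieces of data do not pin down $E^*_\lambda$. Absent either a working involution or an interpolation substitute for the CMW raising operator, the base case---and hence the proof---remains open.
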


\begin{example}
To use \cref{thm:main} to compute the interpolation ASEP polynomial $f^*_{(0,2)}$, we enumerate all signed multiline queues of type $(0,2)$, see \Cref{fig:GMLQ_example2}, and then sum up 
their weights, obtaining
\begin{align*}
f^*_{(0,2)}=&\frac{1-t}{1-qt}(x_1-q/t)(x_2-1/t) + \frac{1-t}{t}(x_1-q/t) + (x_2-q/t)(x_2-1/t) + \\
&(1-t) \frac{q}{t} (x_2-1/t) + \frac{q^2}{t^2} \frac{(1-t)^3}{1-qt} + \frac{q}{t} \frac{(1-t)^2}{1-qt} (x_2-q/t).
\end{align*}
The usual ASEP polynomial
is the top homogeneous part of the expression above, namely 
$$f_{(0,2)}=\frac{1-t}{1-qt} x_1 x_2 + x_2^2.$$
This can be computed from the 
signed multiline queues which have no negative balls, and whose pairings from row $r'$ to row $r$ are all trivial.

    \begin{figure}[!ht]
  \centerline{\includegraphics[height=2.8in]{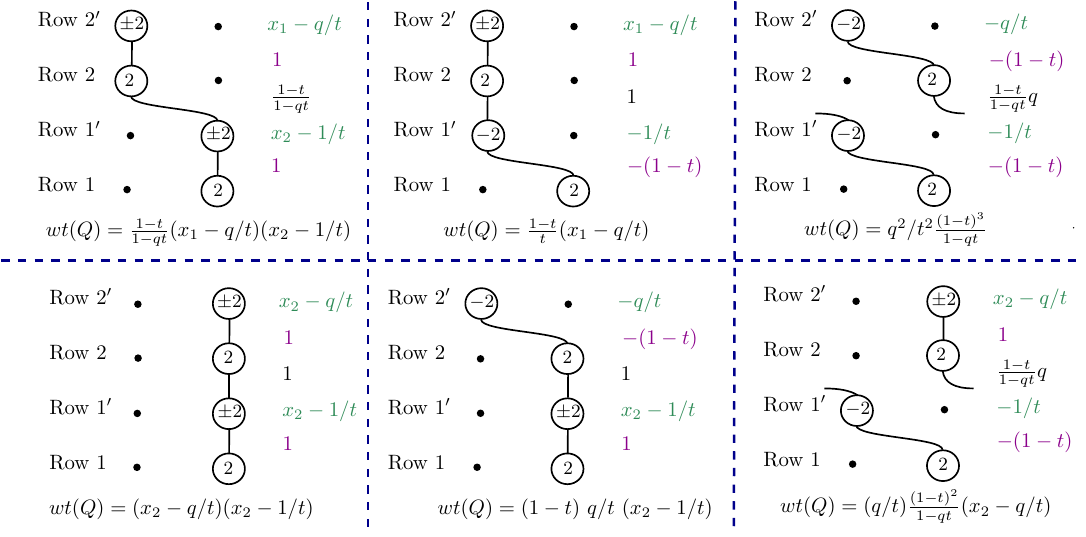}}
\centering
 \caption{The signed multiline queues of type $(0,2)$, with 
 their weights superimposed.  Note that 
 a ball labeled $\pm 2$ represents
 the fact that the corresponding ball can 
 either be a positive or a negative ball.  Thus, the six diagrams above actually represent $15$ signed multiline queues.}
\label{fig:GMLQ_example2}
 \end{figure}
\end{example}

\begin{remark}
While our combinatorial formulas for interpolation ASEP and Macdonald polynomials can be seen as generalizations of the combinatorial formulas in 
\cite{CorteelMandelshtamWilliams2022}, our proof strategy is quite different.  The proof in \cite{CorteelMandelshtamWilliams2022} utilized the
circular symmetry
$$q^{\mu_n} f_{\mu}(\mathbf{x}; q, t)=
f_{\mu_n, \mu_1,\dots,\mu_{n-1}}(qx_n, x_1,\dots, x_{n-1}; q, t)$$ for ASEP polynomials; however,  interpolation ASEP polynomials lack this property.
\end{remark}

When $q=1$, the ASEP polynomials and the Macdonald polynomials have a probabilistic interpretation in terms of the $t$-Push TASEP \cite{AMW}.  We will give an interpolation analogue of this result in \cite{BenDaliWilliams2}, using the recursive structure of how signed multiline queues are built.  

\subsection{Integrality and Comparison with Okounkov's Formula}

Knop and Sahi proved that the \emph{integral form} of the interpolation  Macdonald polynomials $P_\lambda^*$ satisfy an integrality property (see e.g \cite[Corollary 5.5]{Knop1997b}). This can be proved using our combinatorial formula for $P_\lambda^*$, and extended to interpolation ASEP polynomials $f^*_\mu$; see \cref{ssec:integral_form}.

In \cite{Okounkov1998}, Okounkov gave a combinatorial formula for the interpolation symmetric Macdonald polynomials, which, to our knowledge, was the only such formula prior to our work. This formula is obtained by ``shifting'' Macdonald's formula for the homogeneous symmetric Macdonald polynomials~\cite[Section VI.7]{Macdonald1995}. It expresses the polynomial $P_\lambda^*$ as a sum over tableaux of shape $\lambda$, counted with coefficients given by products of \emph{Pieri coefficients}. These coefficients are quite complicated to compute, and in particular, the integrality property is not apparent from this tableau formula.

\bigskip

The structure of this paper is as follows.
In \cref{sec:background} we provide background on 
interpolation polynomials; we also define interpolation ASEP polynomials, and give vanishing condition characterization of them. In \cref{sec:packed}
we provide a recursion for interpolation ASEP polynomials from \emph{packed} compositions; this
provides a base case for our subsequent arguments.  We generalize this recursion to arbitrary compositions in \cref{sec:tworow}.  In \cref{sec:proof} we provide a combinatorial analysis of two-line signed multiline queues, and complete the proof of the main theorem. 
In \cref{sec:tableau} we give a tableaux formula for interpolation ASEP and Macdonald polynomials and prove an integrality result for them.
Finally in 
\cref{sec:factorization} we give a factorization property for interpolation Macdonald polynomials at $q=1$.

\bigskip
\noindent{\bf Acknowledgements:~}
We would like to thank Olya Mandelshtam for several very useful discussions.
HBD acknowledges support from the Center of Mathematical Sciences and Applications at Harvard University.
LW was supported by the National Science Foundation under Award No.
DMS-2152991 until May 12, 2025, when the grant was terminated; she would also like to thank the Radcliffe Institute for Advanced Study, where some of this work was carried out. 
Any opinions, findings, and conclusions or recommendations expressed in this material are
those of the author(s) and do not necessarily reflect the views of the National Science
Foundation.

\section{Background on interpolation polynomials}\label{sec:background}

We now provide some more background on interpolation polynomials.
We also prove some properties of interpolation ASEP polynomials.

\subsection{Notation}\label{ssec:notation}
Fix $n\geq 1$. Let $\YY_n$ denote the set of integer partitions 
$\lambda=(\lambda_1,\dots,\lambda_n)=(\lambda_1 \geq \dots \geq \lambda_n)$ with at most $n$ parts.  We let $|\lambda|$ denote the sum $\lambda_1+ \dots + \lambda_n$ of the parts of the partition and call it the \emph{size} of $\lambda$.

Let $\mcP_n$ denote the ring of polynomials in $n$ variables, and 
let $\mcP_n^{(d)}$ denote the polynomials of degree at most $d$. Similarly, let $\Lambda_n$ denote the ring of symmetric polynomials with $n$ variables and let $\Lambda_n^{(d)}$ denote the symmetric polynomials with degree at most $d$.
All the polynomials considered here will have coefficients in $\QQ(q,t).$

The symmetric group acts on $\NN^n$ by
\begin{equation} \label{eq:action}
\sigma\cdot (\mu_1,\mu_2,\dots,\mu_n):=(\mu_{\sigma^{-1}(1)},\mu_{\sigma^{-1}(2)},\dots,\mu_{\sigma^{-1}(n)}).
\end{equation}
For $\mu\in\NN^n$ we will write
$x^\mu:=x_1^{\mu_1}\dots x_n^{\mu_n}$. The symmetric group acts on $\mcP_n$ by
$$\sigma(x^\mu):=x_{\sigma(1)}^{\mu_1}\dots x_{\sigma(N)}^{\mu_n}=x^{\sigma(\mu)}.$$

\subsection{Interpolation Macdonald polynomials}
We now recall some of the main results of \cite{Knop1997b,Sahi1996}.
Recall the notation $\widetilde{\mu}$ from \eqref{eq:k2}. 
\begin{thm}[{\cite[Theorem 2.2]{Knop1997b}}]\label{thm:Knop-Sahi-interpolation}
    Fix two integers $d,n\geq 1$, and fix a family $(a_\nu)_{\nu\in\NN^n,|\nu|\leq d}$ in $\QQ(q,t)$. Then there exists a unique polynomial $f\in\mcP_n^{(d)}$ such that for any $|\nu|\leq d$ we have $f(\widetilde{ \nu})=a_\nu$.

    In particular, if $f\in\mcP_n^{(d)}$ such that for any $|\nu|\leq d$ we have $f(\widetilde{\nu})=0$, then $f=0$.
\end{thm}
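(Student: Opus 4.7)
My approach is to reduce both claims to the single vanishing statement at the end by a dimension count, and then to prove the vanishing statement by constructing a triangular family of ``separating polynomials.''

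First, counting monomials $x^\mu$ with $|\mu|\le d$ gives $\dim_{\QQ(q,t)} \mcP_n^{(d)} = \binom{n+d}{n}$, which is exactly the cardinality $N$ of $\{\nu\in\NN^n:|\nu|\le d\}$. Hence the evaluation map $\mathrm{ev}: \mcP_n^{(d)}\to\QQ(q,t)^N$ sending $f\mapsto(f(\widetilde\nu))_{|\nu|\le d}$ is a linear map between spaces of equal finite dimension, and surjectivity (existence of an interpolating $f$ for arbitrary data $(a_\nu)$) is equivalent to injectivity (the ``in particular'' vanishing claim). It therefore suffices to prove the vanishing claim.

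For that, I would construct, for each $\nu\in\NN^n$, a polynomial $M_\nu\in\mcP_n^{(|\nu|)}$ satisfying $M_\nu(\widetilde\mu)=0$ for every $\mu\neq\nu$ with $|\mu|\le|\nu|$, and $M_\nu(\widetilde\nu)\neq 0$. Given such a family, the $M_\nu$ with $|\nu|\le d$ form a basis of $\mcP_n^{(d)}$ (by triangularity with respect to the filtration by $|\nu|$, combined with the nonvanishing at $\widetilde\nu$), and any $f\in\mcP_n^{(d)}$ that vanishes on all the $\widetilde\mu$ with $|\mu|\le d$ expands as $f=\sum c_\nu M_\nu$; evaluating at $\widetilde\mu$ for $|\mu|$ increasing from $0$ forces $c_\mu=0$ inductively, so $f=0$.

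The main obstacle is producing the $M_\nu$. I would proceed by strong induction on $|\nu|$, with $M_{0^n}=1$ as the base case. For the inductive step, pick a coordinate $i$ with $\nu_i>0$, let $\nu'$ be $\nu$ with the $i$-th entry decremented, and try $M_\nu=(x_i-c)\cdot M_{\nu'}$ for a scalar $c=q^a t^{-b}$ chosen so that the new linear factor kills exactly those $\widetilde\mu$ (with $|\mu|=|\nu|$, $\mu\neq\nu$) not already killed by $M_{\nu'}$, while $\widetilde\nu_i\neq c$. The delicate combinatorial task is to select $i$, $\nu'$, and $c$ coherently so that every $\mu\neq\nu$ of the same size is eventually eliminated, which requires exploiting the precise structure of the exponents $k_i(\nu)$ defined in~\eqref{eq:k}. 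A posteriori, this recursion produces (up to normalization) the nonsymmetric interpolation polynomials $E_\nu^*$ that appear later in the paper, and the argument parallels the proof strategy of Knop and Sahi.
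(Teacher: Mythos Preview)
The paper does not prove this theorem; it is quoted from \cite{Knop1997b} and used as a black box, so there is no in-paper argument to compare against. Your dimension count and the reduction of existence to the vanishing statement are correct, and the plan of building a triangular family $(M_\nu)$ is the right one---it is essentially what Knop does.

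The gap is in your construction of the $M_\nu$. The recursion $M_\nu=(x_i-c)\,M_{\nu'}$, with $\nu'$ obtained by decrementing coordinate $i$, cannot work. Take $n=2$ and $\nu=(2,0)$: you are forced to $i=1$, $\nu'=(1,0)$, and $M_{(1,0)}=x_1-t^{-1}$ (it must vanish at $\widetilde{(0,0)}=(t^{-1},1)$ and $\widetilde{(0,1)}=(t^{-1},q)$). Among size-$2$ compositions other than $(2,0)$, this factor already kills $\widetilde{(0,2)}=(t^{-1},q^2)$ but not $\widetilde{(1,0)}=(q,t^{-1})$ or $\widetilde{(1,1)}=(qt^{-1},q)$; a single new factor $(x_1-c)$ would need $c=q$ and $c=qt^{-1}$ simultaneously. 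So the ``delicate'' step is not merely unfinished---the proposed shape of the recursion is wrong. In Knop's argument the $M_\nu$ are still products of linear factors, but the constants are determined by the \emph{whole} composition $\nu$ at once (via the combinatorics of its diagram and the statistics $k_i$), not inherited from a predecessor; that global choice is precisely what makes all the required vanishings hold. Your closing remark that the $M_\nu$ so produced are the $E_\nu^*$ is then moot, since the recursion fails to produce any valid $M_{(2,0)}$ at all.
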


\begin{remark}
 Our notation is similar to but not identical to that of \cite{Knop1997b}.
In particular, we have
$$(\widetilde \mu)^{\rev}=\widebar{\mu^{\rev}},$$
where the sequence $\widebar \mu$ is the one from \cite{Knop1997b}, and $\nu^{\rev}:=(\nu_n,\dots,\nu_1)$.  When $f$ is symmetric, we have $f(\widetilde \nu)=f(\widebar \nu)$.
\end{remark}

Polynomials defined by their evaluation on compositions as in \cref{thm:Knop-Sahi-interpolation} are known as \emph{interpolation polynomials}. 
\begin{thm}[\cite{Knop1997b,Sahi1996}]\label{thm:Knop_Sahi}
    Fix $\mu\in \NN^n$ of size $d$. There exists a unique polynomial $E^*_\mu\in\mcP_n^{(d)}$, called the \emph{nonsymmetric interpolation Macdonald polynomial}, such that
    \begin{itemize}
        \item $[x^\mu]E^*_\mu=1$ (so in particular, $E^*_{\mu}$ has degree $d$),
        \item $E^*_\mu(\widetilde\nu)=0$ for any $\nu\in \NN^n$ satisfying $|\nu|\leq d$ and $\nu\neq \mu$.
    \end{itemize}
    Moreover, the top homogeneous part of $E^*_\mu$ is the nonsymmetric Macdonald polynomial $E_\mu$.
\end{thm}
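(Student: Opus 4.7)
First, I would apply \cref{thm:Knop-Sahi-interpolation} to reduce the problem to a one-dimensional space. Set
\[ W_\mu := \{ f \in \mcP_n^{(d)} : f(\widetilde\nu) = 0 \text{ for all } \nu \in \NN^n \text{ with } |\nu| \leq d,\ \nu \neq \mu \}. \]
The interpolation theorem says that evaluation at $\{\widetilde\nu : |\nu| \leq d\}$ is a linear isomorphism $\mcP_n^{(d)} \cong \QQ(q,t)^{\{\nu\, :\, |\nu|\leq d\}}$, so $\dim W_\mu = 1$. Moreover, any nonzero $f \in W_\mu$ has total degree exactly $d$: if $\deg f \leq d-1$, then since $|\mu|=d$, $f$ would vanish at $\widetilde\nu$ for every $|\nu|\leq d-1$, and \cref{thm:Knop-Sahi-interpolation} applied with parameter $d-1$ would force $f=0$.

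The remaining content of the theorem -- existence, uniqueness, and the identification of the top component -- all reduce to the single claim that the degree-$d$ homogeneous component $f^{(d)}$ of any nonzero $f \in W_\mu$ is a nonzero scalar multiple of the nonsymmetric Macdonald polynomial $E_\mu$. Indeed, granting this, $[x^\mu]f = [x^\mu]f^{(d)}\neq 0$ (since $[x^\mu]E_\mu = 1$), so rescaling yields a unique $E^*_\mu$ with $[x^\mu]E^*_\mu = 1$, uniqueness is automatic from $\dim W_\mu = 1$, and the ``moreover'' clause is exactly the identification of $f^{(d)}$ with $E_\mu$.

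My plan to identify $f^{(d)}$ with $E_\mu$ is to use the Cherednik $Y$-operators $Y_1,\dots,Y_n$ on $\mcP_n$, which pairwise commute and characterize $E_\mu$ as the unique simultaneous eigenvector with eigenvalue sequence $(\widetilde\mu_1,\dots,\widetilde\mu_n)$ normalized by $[x^\mu]E_\mu=1$. The main obstacle, and the technical heart of this approach, is to construct interpolation analogues $\widehat Y_1,\dots,\widehat Y_n$ on $\mcP_n$ satisfying: (i) they pairwise commute; (ii) each $\widehat Y_i$ preserves the filtration $\{\mcP_n^{(d)}\}$; and (iii) the induced action on $\mcP_n^{(d)}/\mcP_n^{(d-1)}$ agrees with $Y_i$. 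Granting these operators, for nonzero $f \in W_\mu$ one checks that $g_i := (\widehat Y_i - \widetilde\mu_i)f \in \mcP_n^{(d)}$ vanishes at every $\widetilde\nu$ with $|\nu|\leq d$: at $\widetilde\nu \neq \widetilde\mu$ from the vanishing of $f$ together with the way $\widehat Y_i$ respects these evaluation points, and at $\widetilde\mu$ by the eigenvalue cancellation. The interpolation theorem then gives $g_i=0$, so $f$ is a simultaneous $\widehat Y_i$-eigenvector; passing to top-degree components via (ii) and (iii), $f^{(d)}$ is a simultaneous $Y_i$-eigenvector with eigenvalues $\widetilde\mu_i$, forcing $f^{(d)} \propto E_\mu$. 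Constructing the $\widehat Y_i$ with properties (i)--(iii) is carried out by Knop using the double affine Hecke algebra; an alternative route would be to construct $E^*_\mu$ recursively by moving $\mu$ along simple reflections of the affine Weyl group, starting from $E^*_{(0,\dots,0)}=1$ and using Knop--Sahi intertwiners, with inductive verification of the vanishing conditions at each step.
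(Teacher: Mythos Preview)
The paper does not give its own proof of this theorem; it simply cites Knop and Sahi and remarks that the existence/uniqueness part follows from \cref{thm:Knop-Sahi-interpolation}, while the identification of the top component with $E_\mu$ is ``more surprising.'' Your treatment of the first part is correct and matches that remark exactly: the interpolation isomorphism forces $\dim W_\mu = 1$, and your degree argument showing a nonzero element has degree exactly $d$ is clean.

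For the second part you are outlining Knop's approach via inhomogeneous Cherednik operators, and also noting Sahi's intertwiner alternative. Both are the right routes. However, there is a gap in your sketch: the three properties (i)--(iii) you list for the $\widehat Y_i$ are \emph{not} sufficient to run the argument you give. From (i)--(iii) alone one cannot conclude that $(\widehat Y_i f)(\widetilde\nu)$ is controlled by $f(\widetilde\nu)$ when $f\in W_\mu$; an operator can preserve the degree filtration and agree with $Y_i$ on the associated graded while being completely unrelated to the evaluation points $\widetilde\nu$. You implicitly invoke a fourth property when you write ``the way $\widehat Y_i$ respects these evaluation points,'' but this is the crux and must be made precise: one needs, roughly, that in the basis of $\mcP_n^{(d)}$ dual to evaluation at the $\widetilde\nu$, the operator $\widehat Y_i$ is triangular with diagonal entries $\widetilde\nu_i$ (equivalently, that $\widehat Y_i$ preserves each $W_\mu$ and acts there by $\widetilde\mu_i$). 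In Knop's construction this comes from the specific form of the affine intertwiner $\Phi$ and the Hecke operators, and is what links the operator side to the interpolation side. Without it your eigenvector argument does not close, and in particular the ``eigenvalue cancellation at $\widetilde\mu$'' step has no content.
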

Note that the first part of \cref{thm:Knop_Sahi} is a consequence of \cref{thm:Knop-Sahi-interpolation}. The second part giving the connection to Macdonald polynomials is however more surprising.
\begin{remark}\label{rmk:Enu(nu)}
    For any $\nu\in\NN^n$, we have $E_\nu^*(\widetilde{\nu})\neq 0$. This is a consequence of \cref{thm:Knop-Sahi-interpolation} and the fact that $E_\nu$ is not identically zero by definition.
\end{remark}

For any partition $\lambda\in\YY_n$ of size $d$, we define the space $\VV\subset\mcP^{(d)}_n$ by
\\
$$\VV:=\left\{f\in\mcP_n^{(d)}|f(\widetilde\nu)=0 \text{ for any $|\nu|\leq |\lambda|$ and $\nu\notin\SG_n(\lambda)$}\right\}.$$
\begin{lem}\label{lem:span}
We have
$$\VV=\Span_{\QQ(q,t)} \left\{E^*_\mu|\mu\in\SG_n(\lambda)\right\}.$$
\end{lem}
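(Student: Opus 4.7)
The plan is to prove the equality by showing both inclusions, with the key observation being that the evaluation of $E^*_\mu$ at points $\widetilde{\nu}$ with $\nu\in\SG_n(\lambda)$ has a diagonal structure.

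For the inclusion $\supseteq$, I would simply verify that each generator $E^*_\mu$ with $\mu\in\SG_n(\lambda)$ lies in $\VV$. Indeed, $|\mu|=|\lambda|=d$, and by \cref{thm:Knop_Sahi}, $E^*_\mu$ has degree $d$ and satisfies $E^*_\mu(\widetilde\nu)=0$ for all $\nu\neq \mu$ with $|\nu|\leq d$. In particular, if $\nu\notin\SG_n(\lambda)$ and $|\nu|\leq d$, then $\nu\neq \mu$, so $E^*_\mu(\widetilde{\nu})=0$, giving $E^*_\mu\in\VV$.

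For the inclusion $\subseteq$, fix $f\in\VV$ and define the candidate coefficients
$$c_\mu:=\frac{f(\widetilde\mu)}{E^*_\mu(\widetilde\mu)}\qquad\text{for }\mu\in\SG_n(\lambda),$$
where the denominator is nonzero by \cref{rmk:Enu(nu)}. I would then introduce
$$g:=f-\sum_{\mu\in\SG_n(\lambda)} c_\mu E^*_\mu\in\mcP_n^{(d)}$$
and check that $g(\widetilde{\nu})=0$ for every $\nu$ with $|\nu|\leq d$. There are two cases: if $\nu\in\SG_n(\lambda)$, then by \cref{thm:Knop_Sahi} the off-diagonal evaluations $E^*_\mu(\widetilde\nu)$ vanish for $\mu\neq\nu$ (both have size $d$), so $g(\widetilde\nu)=f(\widetilde\nu)-c_\nu E^*_\nu(\widetilde\nu)=0$; if $\nu\notin\SG_n(\lambda)$ with $|\nu|\leq d$, then $f(\widetilde\nu)=0$ by definition of $\VV$, while each $E^*_\mu(\widetilde\nu)=0$ by the vanishing conditions of \cref{thm:Knop_Sahi} (since $\nu\neq\mu$ and $|\nu|\leq d=|\mu|$). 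The uniqueness part of \cref{thm:Knop-Sahi-interpolation} applied to $g$ then forces $g=0$, giving the desired decomposition of $f$.

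The argument also gives linear independence of $\{E^*_\mu:\mu\in\SG_n(\lambda)\}$ as a bonus (evaluate any vanishing linear relation at $\widetilde\nu$ and use the diagonal structure), so the span is genuinely a direct sum description. No real obstacle arises here; the entire proof is driven by the interpolation uniqueness theorem \cref{thm:Knop-Sahi-interpolation} together with the crucial fact that, since partitions in $\SG_n(\lambda)$ all have the same size, the vanishing conditions defining $E^*_\mu$ yield orthogonality rather than mere triangularity on the set of evaluation points indexed by $\SG_n(\lambda)$.
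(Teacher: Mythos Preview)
Your proof is correct and takes essentially the same approach as the paper's own proof: both verify $\supseteq$ directly from \cref{thm:Knop_Sahi}, then for $\subseteq$ define the linear combination $\sum_{\mu\in\SG_n(\lambda)} \frac{f(\widetilde\mu)}{E^*_\mu(\widetilde\mu)} E^*_\mu$ and use the diagonal evaluation structure together with \cref{thm:Knop-Sahi-interpolation} to conclude it equals $f$. Your version is slightly more explicit in separating the two cases for $\nu$, but the argument is the same.
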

\begin{proof}
The inclusion $\supseteq$ is direct from \cref{thm:Knop_Sahi}. We now fix $f$ in $\VV$ and we want to prove that $f$ is a linear combination of $E^*_\mu$ for $\mu\in\SG_n(\lambda)$. We define 
$$g(\xx)=\sum_{\nu\in\SG_n(\lambda)}\frac{f(\widetilde\mu)}{E^*_\nu(\widetilde{\nu})}E^*_\nu(\xx).$$
We claim that $f=g$. Indeed, $f(\widetilde \nu)=g(\widetilde \nu)$ for all compositions $|\nu|\leq |\lambda|$. Hence $f$ and $g$ are of degree at most $|\lambda|$ and agree on all compositions of size at most $|\lambda|$. By \cref{thm:Knop-Sahi-interpolation}, we get that $f=g$.    
\end{proof}

In a similar way, one shows that
    $\mcP_n=\oplus_{\lambda\in \YY_n} \VV$; see also \cite[Corollary 2.6]{Knop1997b}. In particular, $\{E^*_\mu:\mu\in \NN^n\}$ is a basis of $\mcP_n$.

\subsection{Hecke Operators}
For $1\leq i \leq n-1$, we let $s_i=(i,i+1)$ denote the transposition exchanging $i$ and $i+1$.
The \emph{Hecke operator} $T_i$, which acts on $\mcP_n$,
is defined by 
\begin{equation}\label{eq:Hecke}
T_i:=t-\frac{tx_i-x_{i+1}}{x_i-x_{i+1}}(1-s_i).
\end{equation}

These operators satisfy the relations of the Hecke algebra of type $A_{n-1}$
\begin{equation}\label{eq:Hecke_algebra}
\begin{array}{ll}
(T_i-t)(T_i+1)=0 & \text{for $1\leq i \leq n-1$}     \\
 T_iT_{i+1}T_i=T_{i+1}T_iT_{i+1}    & \text{for $1\leq i \leq n-2$}\\
 T_iT_j=T_jT_i    & \text{for $ |i-j| > 1$}.\\
\end{array}
\end{equation}

If $\sigma\in\SG_n$ and $\sigma=s_{i_1}\dots s_{i_\ell}$ is a reduced decomposition of $\sigma$, we define 
\begin{equation}\label{def:Hecke}
T_\sigma:=T_{i_1}\dots T_{i_\ell}.
\end{equation}
It follows from \eqref{eq:Hecke_algebra} that this definition is independent of the choice of reduced expression.

\begin{lem}
[{\cite[Corollary 3.2]{Knop1997b}}]\label{lem:stability}
For any $i$, we have that $T_i(\VV) \subset \VV.$  In particular, using \cref{lem:span}, 
we conclude that $T_i E_{\nu}^*$ is a linear combination of 
$E_{\mu}^*$ for $\mu\in S_n(\nu)$. 
\end{lem}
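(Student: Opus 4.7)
The plan is to prove the stability by checking the two defining conditions for $V_\lambda^*$. First, $T_i$ preserves the degree filtration $\mathcal{P}_n^{(d)}$: for any polynomial $g$, the difference $(1-s_i)g$ is divisible by $x_i-x_{i+1}$ (since $(1-s_i)g$ vanishes when $x_i=x_{i+1}$), so $\frac{tx_i-x_{i+1}}{x_i-x_{i+1}}(1-s_i)g$ is a polynomial of degree at most $\deg g$. It therefore suffices to show that if $f\in V_\lambda^*$ and $\nu\in\NN^n$ satisfies $|\nu|\leq d=|\lambda|$ and $\nu\notin S_n(\lambda)$, then $(T_if)(\widetilde\nu)=0$.

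The key computation is to compare $s_i\widetilde\nu$ (the point $\widetilde\nu$ with its $i$-th and $(i+1)$-th coordinates swapped) with vanishing points $\widetilde{\nu'}$ of $f$. We split into two cases. If $\nu_i\neq \nu_{i+1}$, then a direct check from \eqref{eq:k} shows that the composition $\nu':=s_i\nu$ satisfies $k_i(\nu')=k_{i+1}(\nu)$, $k_{i+1}(\nu')=k_i(\nu)$, and $k_j(\nu')=k_j(\nu)$ for $j\neq i,i+1$; thus $s_i\widetilde\nu=\widetilde{s_i\nu}$. Since $S_n(\lambda)$ is a symmetric group orbit, $\nu\notin S_n(\lambda)$ implies $s_i\nu\notin S_n(\lambda)$, so both $f(\widetilde\nu)$ and $f(s_i\widetilde\nu)=f(\widetilde{s_i\nu})$ vanish, and formula \eqref{eq:Hecke} yields $(T_if)(\widetilde\nu)=0$.

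If instead $\nu_i=\nu_{i+1}$, then $\nu'=s_i\nu$ equals $\nu$, but $s_i\widetilde\nu$ is generally not of the form $\widetilde{\nu''}$. To handle this, one computes directly from \eqref{eq:k} that $k_i(\nu)-k_{i+1}(\nu)=1$: the only differences in the two sums come from the indices $j=i,i+1$, and exactly one of the two strict/nonstrict comparisons contributes. Hence $\widetilde\nu_{i+1}=q^{\nu_i}t^{-k_{i+1}(\nu)}=t\,q^{\nu_i}t^{-k_i(\nu)}=t\widetilde\nu_i$, so the numerator $t\widetilde\nu_i-\widetilde\nu_{i+1}$ in \eqref{eq:Hecke} vanishes while the denominator does not (as $t\neq 1$). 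Evaluating the rational expression for $T_if$ at $\widetilde\nu$ we get $(T_if)(\widetilde\nu)=tf(\widetilde\nu)=0$.

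Combining the two cases shows $T_if\in V_\lambda^*$, proving the first assertion. The ``in particular'' part is then immediate: by \cref{lem:span}, $V_\lambda^*=\Span_{\QQ(q,t)}\{E_\mu^*:\mu\in S_n(\lambda)\}$, so applying $T_i$ to $E_\nu^*\in V_\nu^*$ and using the stability of $V_\nu^*$ yields $T_iE_\nu^*\in\Span\{E_\mu^*:\mu\in S_n(\nu)\}$. The only real work is the small bookkeeping check that $s_i\widetilde\nu=\widetilde{s_i\nu}$ when $\nu_i\neq\nu_{i+1}$ and that $k_i(\nu)-k_{i+1}(\nu)=1$ when $\nu_i=\nu_{i+1}$; both are direct from the definition of $k_j(\mu)$ and are where the choice of parameters in \eqref{eq:k2} is designed precisely to make the prefactor in $T_i$ vanish in the diagonal case.
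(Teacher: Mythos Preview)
Your proof is correct and follows the same approach as Knop's original argument (cited by the paper); in fact, the key computation you carry out---that $(T_if)(\widetilde\nu)$ is a combination of $f(\widetilde\nu)$ and $f(\widetilde{s_i\nu})$, with the coefficient of the problematic term vanishing when $\nu_i=\nu_{i+1}$---is precisely the content of the paper's \cref{lem:Heckeop-evaluation}. The two case checks on $k_j$ are accurate, and the degree-preservation step is handled correctly.
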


\begin{lem}[{\cite[Corollary 3.4]{Knop1997b}  or \cite[Theorem 4.5]{Sahi1996}}]\label{lem:symmetry_E_lambda}
    Let $\mu\in \NN^n$ such that $\mu_i=\mu_{i+1}.$
    Then $E^*_\mu$ is symmetric in $x_i$ and $x_{i+1}$, or equivalently
    $T_i E^*_\mu=t E^*_\mu.$  
\end{lem}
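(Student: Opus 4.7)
The equivalence ``$E^*_\mu$ is symmetric in $x_i$ and $x_{i+1}$'' $\iff$ ``$T_i E^*_\mu = t E^*_\mu$'' is immediate from the definition \eqref{eq:Hecke}: if $s_i E^*_\mu = E^*_\mu$ then $(1-s_i)E^*_\mu = 0$ and hence $T_i E^*_\mu = t E^*_\mu$; conversely $T_i E^*_\mu = t E^*_\mu$ forces $\tfrac{tx_i-x_{i+1}}{x_i-x_{i+1}}(1-s_i)E^*_\mu = 0$, and since the rational prefactor is nonzero in $\QQ(q,t)(x_1,\dots,x_n)$ we recover $(1-s_i)E^*_\mu = 0$. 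So the plan is to prove $T_i E^*_\mu = t E^*_\mu$ directly.

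Let $\lambda$ denote the partition rearrangement of $\mu$. By \cref{lem:stability} we have $T_i E^*_\mu \in \VV$, and then \cref{lem:span} yields an expansion
\[
T_i E^*_\mu = \sum_{\nu \in \SG_n(\lambda)} c_\nu\, E^*_\nu
\]
with $c_\nu \in \QQ(q,t)$. To pin down the $c_\nu$, I would compare top homogeneous components. A direct check from \eqref{eq:Hecke} shows that $T_i$ preserves homogeneous degree: for any polynomial $g$, the difference $(1-s_i)g$ is divisible by $x_i - x_{i+1}$, so multiplying by the degree-one polynomial $tx_i - x_{i+1}$ and dividing by $x_i - x_{i+1}$ preserves total degree. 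Consequently, the top homogeneous component of $T_i E^*_\mu$ equals $T_i E_\mu$, and the top component of the right-hand side equals $\sum_\nu c_\nu E_\nu$, since each $E^*_\nu$ with $\nu \in \SG_n(\lambda)$ has top component $E_\nu$ of common degree $|\lambda|$.

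Now I would invoke the classical analogue of the desired identity for Cherednik's nonsymmetric Macdonald polynomials: $T_i E_\mu = t E_\mu$ whenever $\mu_i = \mu_{i+1}$. Combined with the previous paragraph at the top-degree level, this gives $\sum_\nu c_\nu E_\nu = t E_\mu$, and linear independence of the $E_\nu$ for $\nu \in \SG_n(\lambda)$ forces $c_\mu = t$ and $c_\nu = 0$ for $\nu \neq \mu$. Hence $T_i E^*_\mu = t E^*_\mu$, as required.

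The main obstacle is the appeal to the classical identity $T_i E_\mu = t E_\mu$ at the homogeneous level. This is standard (it follows from the fact that $E_\mu$ and $E_{s_i\mu}=E_\mu$ share the same Cherednik $Y$-eigenvalues when $\mu_i = \mu_{i+1}$), but if one prefers a self-contained treatment, the same top-component/vanishing-condition argument used above can be run in parallel for $E_\mu$ and $E^*_\mu$, reducing both assertions to a straightforward check in the relevant two-variable slice.
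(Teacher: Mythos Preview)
The paper does not give its own proof of this lemma; it is simply cited from \cite[Corollary 3.4]{Knop1997b} and \cite[Theorem 4.5]{Sahi1996}. So there is no in-paper argument to compare against directly.

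Your argument is correct and, in fact, matches exactly the strategy the paper uses later in its proof of \cref{prop:shape}: expand $T_i E^*_\mu$ in the basis $\{E^*_\nu : \nu \in \SG_n(\lambda)\}$ via \cref{lem:stability} and \cref{lem:span}, then read off the coefficients by passing to top homogeneous components and invoking the known homogeneous identity. The equivalence between symmetry and $T_i E^*_\mu = t E^*_\mu$ is handled cleanly, and the observation that $T_i$ is degree-preserving is exactly what is needed to justify the top-component comparison.

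The one point worth flagging is that your proof genuinely rests on the homogeneous input $T_i E_\mu = t E_\mu$ when $\mu_i = \mu_{i+1}$, which you correctly identify as external. This is fine as an appeal to standard theory, but your closing remark that one could make it ``self-contained'' via a ``straightforward check in the relevant two-variable slice'' is too vague to stand as an actual alternative; if you want a self-contained route, you would need to spell out how the vanishing conditions alone force $s_i E^*_\mu = E^*_\mu$ without passing through the homogeneous case. As written, though, the main argument is sound.
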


We note that Hecke operators can be used to define an inhomogeneous analogue of Cherednik operators, which act diagonally on non-symmetric Macdonald polynomials; see \cite{Knop1997b,Sahi1996}, or \cite[Section 2.6]{BDWv1}. 

\subsection{ASEP and interpolation ASEP polynomials}\label{ssec:ASEP}

Recall the definition of the
ASEP polynomial $f_\mu$ and the interpolation
ASEP polynomial $f^*_\mu$ from \cref{def:ASEP}.  They were defined in terms of the Hecke operators, and the permutation $\sigma_\mu\in\SG_n$, which is the shortest permutation such that  $\sigma_\mu(\lambda)=\mu$. Intuitively, $\sigma_\mu$ sends the left-most part of size $i$ in $\lambda$ to the left-most part of size $i$ in $\mu$, the second left-most to the second left-most, and so on.
\begin{example}
    If $\lambda=(4,4,3,3,1)$ and $\mu=(3,4,1,4,3)$, then $\sigma_\mu=(2,4,1,5,3)$.
\end{example}

\begin{lem}\label{lem:f_mu-V_lambda}
If $\mu\in \SG_n(\lambda)$, then $f^*_\mu\in \VV$.  
\end{lem}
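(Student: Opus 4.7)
The plan is to combine \cref{lem:stability} (Hecke stability of $V_\lambda^*$) with \cref{lem:span} (which identifies $V_\lambda^*$ as the span of the $E_\mu^*$ for $\mu\in S_n(\lambda)$), and to proceed by induction on the length of $\sigma_\mu$. There is no real obstacle here; the statement is an almost immediate corollary of the preparatory lemmas just proved.

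First, I would observe that $E_\lambda^* \in V_\lambda^*$: since $\lambda \in S_n(\lambda)$, the polynomial $E_\lambda^*$ is one of the spanning vectors listed in \cref{lem:span}. Equivalently, the defining vanishing conditions of $E_\lambda^*$ from \cref{thm:Knop_Sahi} force $E_\lambda^*(\widetilde{\nu}) = 0$ for every $\nu \in \NN^n$ with $|\nu|\leq|\lambda|$ and $\nu \neq \lambda$, which is stronger than the condition defining $V_\lambda^*$.

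Next, writing a reduced decomposition $\sigma_\mu = s_{i_1} \cdots s_{i_\ell}$, we have $T_{\sigma_\mu} = T_{i_1} \cdots T_{i_\ell}$ by \eqref{def:Hecke}. I would apply these Hecke operators one at a time, starting from $E_\lambda^*$. By \cref{lem:stability}, each $T_{i_k}$ sends $V_\lambda^*$ into itself. Therefore
\[
f^*_\mu \;=\; T_{i_1}\cdots T_{i_\ell}\cdot E_\lambda^* \;\in\; V_\lambda^*,
\]
since at every step we remain inside the stable subspace $V_\lambda^*$. This proves the lemma.

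The only thing worth emphasizing in the write-up is that $V_\lambda^*$ is defined using the degree bound $|\lambda|$, and we need $f_\mu^*$ to have degree at most $|\lambda|$. This is automatic: $E_\lambda^*$ has degree $|\lambda|$ by \cref{thm:Knop_Sahi}, and the Hecke operators $T_i$ do not increase the degree, as is visible from \eqref{eq:Hecke}. So the degree requirement built into $V_\lambda^*$ is preserved throughout the argument, and no additional work is needed.
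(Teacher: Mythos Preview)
Your proof is correct and is precisely the argument the paper intends: the paper's proof is the one-liner ``This follows from \cref{lem:stability},'' and you have simply unpacked that into the explicit inductive application of $T_{i_1}\cdots T_{i_\ell}$ to $E_\lambda^*\in V_\lambda^*$. The extra remark about degree is fine but unnecessary, since the degree bound is already built into the statement $T_i(V_\lambda^*)\subset V_\lambda^*$.
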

\begin{proof}
    This follows from \cref{lem:stability}.
\end{proof}

\begin{prop}\label{prop:T_i-f_star}
    For $1 \leq i \leq n-1$, the interpolation ASEP polynomials $f^*_\mu$ satisfy:
    \begin{enumerate}
        \item\label{item 1} $T_i f^*_{\mu}=f^*_{s_i\mu}$ if $\mu_i>\mu_{i+1}$,
        \item\label{item 2} $T_if^*_{\mu}=tf^*_{\mu}$ if $\mu_i=\mu_{i+1}$,
        \item\label{item 3} $T_i f^*_{\mu}=(t-1)f^*_{\mu}+tf^*_{s_i\mu}$ if $\mu_i<\mu_{i+1}$.
    \end{enumerate}
\end{prop}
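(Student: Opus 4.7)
The plan is to unfold the definition $f^*_\mu = T_{\sigma_\mu} E^*_\lambda$ and reduce everything to the action of simple Hecke generators on reduced expressions, using \cref{lem:symmetry_E_lambda} for the case of equal parts. Throughout I will use the characterization of $\sigma_\mu$ as the unique permutation sending $\lambda$ to $\mu$ that preserves the order of positions within each level set of $\lambda$; equivalently, $\sigma_\mu^{-1}$ lists, in increasing order, the positions of each value in $\mu$.

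\textbf{Case \ref{item 1} ($\mu_i > \mu_{i+1}$).} Let $j_a = \sigma_\mu^{-1}(i)$ and $j_b = \sigma_\mu^{-1}(i+1)$; these are the positions in $\lambda$ corresponding to $\mu_i$ and $\mu_{i+1}$, respectively. Since $\mu_i > \mu_{i+1}$ and $\lambda$ is weakly decreasing, $j_a < j_b$. A direct check against the characterization above gives $\sigma_{s_i \mu} = s_i \sigma_\mu$, and the inequality $\sigma_\mu^{-1}(i) < \sigma_\mu^{-1}(i+1)$ gives $\ell(s_i \sigma_\mu) = \ell(\sigma_\mu)+1$. Hence $T_i T_{\sigma_\mu} = T_{s_i \sigma_\mu} = T_{\sigma_{s_i \mu}}$, and applying both sides to $E^*_\lambda$ yields $T_i f^*_\mu = f^*_{s_i\mu}$.

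\textbf{Case \ref{item 3} ($\mu_i < \mu_{i+1}$).} Here $s_i \mu$ satisfies $(s_i\mu)_i > (s_i\mu)_{i+1}$, so by Case \ref{item 1} we have $T_i f^*_{s_i\mu} = f^*_\mu$. The Hecke quadratic relation $T_i^2 = (t-1)T_i + t$ then gives
\[
T_i f^*_\mu = T_i^2 f^*_{s_i\mu} = (t-1)T_i f^*_{s_i\mu} + t f^*_{s_i\mu} = (t-1) f^*_\mu + t f^*_{s_i\mu}.
\]

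\textbf{Case \ref{item 2} ($\mu_i = \mu_{i+1}$).} Now $\sigma_\mu^{-1}(i)$ and $\sigma_\mu^{-1}(i+1)$ are two positions in $\lambda$ holding the common value $\mu_i=\mu_{i+1}$; by the order-preserving characterization they must be consecutive indices, say $j$ and $j+1$, with $\sigma_\mu(j)=i$ and $\sigma_\mu(j+1)=i+1$. In particular $\lambda_j = \lambda_{j+1}$, so \cref{lem:symmetry_E_lambda} gives $T_j E^*_\lambda = t E^*_\lambda$. On the permutation side, one checks directly that $s_i \sigma_\mu = \sigma_\mu s_j$, and both length conditions $\ell(s_i\sigma_\mu)=\ell(\sigma_\mu)+1$ and $\ell(\sigma_\mu s_j)=\ell(\sigma_\mu)+1$ hold (from $\sigma_\mu^{-1}(i)<\sigma_\mu^{-1}(i+1)$ and $\sigma_\mu(j)<\sigma_\mu(j+1)$). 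Thus $T_i T_{\sigma_\mu} = T_{s_i\sigma_\mu} = T_{\sigma_\mu s_j} = T_{\sigma_\mu} T_j$, and therefore
\[
T_i f^*_\mu = T_i T_{\sigma_\mu} E^*_\lambda = T_{\sigma_\mu} T_j E^*_\lambda = t\, T_{\sigma_\mu} E^*_\lambda = t f^*_\mu.
\]

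The only real content is combinatorial bookkeeping on $\sigma_\mu$: identifying the correct $j$ in $\lambda$ matching $i$ in $\mu$, and tracking that passing from $\sigma_\mu$ to $s_i \sigma_\mu$ always adds (rather than removes) a length. This is the step that needs the most care, but it is a purely combinatorial verification based on the uniqueness of $\sigma_\mu$ described above, not a genuine obstacle. Once these length relations are in hand, the three cases follow formally from \cref{lem:symmetry_E_lambda} and the Hecke algebra relations of \eqref{eq:Hecke_algebra}.
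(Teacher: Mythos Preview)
Your proof is correct and follows essentially the same route as the paper's: all three cases reduce to identities in the Hecke algebra on the word $T_{\sigma_\mu}$, using \cref{lem:symmetry_E_lambda} for the equal-parts case and the quadratic relation for item~(3). If anything, your treatment of item~(2) is slightly more careful than the paper's: you explicitly verify that $\sigma_\mu^{-1}(i)$ and $\sigma_\mu^{-1}(i+1)$ are \emph{consecutive} indices (using that $\lambda$ is weakly decreasing), so that $s_j=\sigma_\mu^{-1}s_i\sigma_\mu$ is a \emph{simple} reflection and the factorization $T_{\sigma_\mu s_j}=T_{\sigma_\mu}T_j$ is legitimate---a point the paper's proof leaves implicit in its notation.
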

\begin{proof}
Let $\mu':=s_i\cdot \mu$. If $\mu_i>\mu_{i+1}$ then $\sigma_{\mu'}=s_i\sigma_\mu$.  Using the fact that $\ell(s_i\sigma_\mu)=\ell(\sigma_\mu)+1$, we get
$$f^*_{\mu'}=T_{\sigma_{\mu'}}\cdot E_\lambda^*=T_iT_{\sigma_\mu}\cdot E^*_\lambda=T_i\cdot f^*_\mu,$$
which gives \cref{item 1}.

We now assume that $\mu_i=\mu_{i+1}$. We then have  $\mu=\sigma_{\mu}(\lambda)=s_i\sigma_\mu(\lambda)$ which implies, by definition of $\sigma_\mu$, that $\ell(s_i\sigma_\mu)=\ell(\sigma_\mu)+1$. Hence, we get as above that $T_if^*_{\mu}=T_{s_i\sigma_\mu}\cdot E_\lambda^*.$

Consider now the transposition $s_j:=\sigma_\mu^{-1}s_i\sigma_\mu$. Since $s_i\mu=\mu$, we get $s_j \lambda=\lambda$. We deduce that $\ell(\sigma_\mu s_j)=\ell(s_i \sigma_\mu)=\ell(\sigma_\mu)+1.$ Hence, 
$$T_i\cdot f^*_{\mu}=T_{s_i\sigma_\mu} \cdot E_{\lambda}^*=T_{\sigma_\mu}T_j\cdot E_\lambda^*.$$
Using \cref{lem:symmetry_E_lambda} and the fact that $s_j\lambda=\lambda$ we deduce that 
\begin{equation*}
  T_i\cdot f^*_{\mu}=tT_{\sigma_\mu}\cdot E_\lambda^*=tf_\mu^*.\qedhere  
\end{equation*}
\cref{item 3} follows from \cref{item 1} and the relations of \cref{eq:Hecke_algebra}.
\end{proof}
\begin{remark}\label{rem:ASEPpoly}
It is well known that the usual ASEP polynomials also satisfy the relations of \cref{prop:T_i-f_star}, see e.g. \cite{CorteelMandelshtamWilliams2022}.  One can prove this using the same proof as above.
\end{remark}

\begin{lem}\label{lem:V-f}
Let $\lambda=(\lambda_1,\dots,\lambda_n)$ be a partition and let $V_{\lambda}:= \Span_{\QQ(q,t)}\{E_{\mu} \ \vert \ \mu\in S_n(\lambda)\}$.   The ASEP polynomials
$\{f_{\mu} \ \vert \ \mu\in S_n(\lambda)\}$ form a basis for the space $V_{\lambda}$.   
\end{lem}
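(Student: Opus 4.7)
The plan is to verify that the family $\{f_\mu : \mu \in S_n(\lambda)\}$ has cardinality equal to $\dim V_\lambda$ and is linearly independent. Since the full family $\{E_\nu\}_{\nu \in \NN^n}$ is a basis of $\mcP_n$, the subfamily $\{E_\nu : \nu \in S_n(\lambda)\}$ is automatically linearly independent, and by definition of $V_\lambda$ it spans $V_\lambda$; hence $\dim V_\lambda = |S_n(\lambda)|$, which matches the size of $\{f_\mu\}$. It is therefore enough to prove linear independence of the $f_\mu$.

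For this I would establish, by induction on $\ell(\sigma_\mu)$, the triangular expansion
\[
f_\mu \;=\; c_\mu\, E_\mu \;+\; \sum_{\substack{\nu \in S_n(\lambda) \\ \ell(\sigma_\nu) < \ell(\sigma_\mu)}} a_{\mu,\nu}\, E_\nu, \qquad c_\mu \in \QQ(q,t)\setminus\{0\}.
\]
The base case is $\mu = \lambda$, where $f_\lambda = E_\lambda$ and $c_\lambda = 1$. For the inductive step with $\ell(\sigma_\mu) = k \geq 1$, pick an index $i$ with $\mu_i < \mu_{i+1}$ (which exists since $\mu$ is not a partition) and set $\mu' := s_i\mu$; then $\ell(\sigma_{\mu'}) = k-1$ and $\mu'_i > \mu'_{i+1}$. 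The ordinary-ASEP analog of \cref{prop:T_i-f_star} recorded in \cref{rem:ASEPpoly} gives $f_\mu = T_i f_{\mu'}$. Applying $T_i$ to the inductive expansion of $f_{\mu'}$ and invoking the classical action of $T_i$ on nonsymmetric Macdonald polynomials --- namely that $T_i E_\nu$ always lies in $\Span(E_\nu, E_{s_i\nu})$ and that the coefficient of $E_{s_i\nu}$ in $T_i E_\nu$ is nonzero whenever $\nu_i > \nu_{i+1}$ --- one finds that the coefficient of $E_\mu$ in $f_\mu$ equals $c_{\mu'}$ times a nonzero scalar. Indeed, no other index $\nu' \ne \mu'$ appearing in $f_{\mu'}$ can satisfy $\nu' = \mu$ or $s_i\nu' = \mu$ without violating the constraint $\ell(\sigma_{\nu'}) \leq k-1$ from the induction hypothesis. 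All remaining terms involve $E_\nu$ with $\ell(\sigma_\nu) < k$, completing the induction.

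The change-of-basis matrix between $\{f_\mu\}_{\mu \in S_n(\lambda)}$ and $\{E_\nu\}_{\nu \in S_n(\lambda)}$ is then triangular with nonzero diagonal with respect to any linear order on $S_n(\lambda)$ in which $\nu$ precedes $\mu$ whenever $\ell(\sigma_\nu) < \ell(\sigma_\mu)$; in particular it is invertible, and so $\{f_\mu\}$ is a basis of $V_\lambda$. The main obstacle will be pinning down the $T_i$-action on $E_\nu$ with the required nonvanishing when $\nu_i > \nu_{i+1}$. This is classical in the Cherednik/Knop--Sahi theory --- it follows from intertwiner formulas together with the distinctness of the Cherednik eigenvalues $\widetilde{\nu}_i$ and $\widetilde{\nu}_{i+1}$ --- but it needs an explicit reference or a short direct verification to be inserted at this point of the paper.
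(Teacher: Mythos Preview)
Your argument is correct but takes a different route from the paper. The paper does not expand $f_\mu$ in the $E_\nu$ basis; instead it invokes the multiline-queue formula of \cref{thm:main0} to read off that for $\tau\in S_n(\lambda)$ the monomial coefficient $[x^\tau]f_\mu$ equals $\delta_{\tau,\mu}$, which immediately gives linear independence (the containment $f_\mu\in V_\lambda$ is obtained by taking top-degree parts in \cref{lem:span} and \cref{lem:f_mu-V_lambda}). Your approach replaces this external combinatorial input with a direct triangularity argument in the $E_\nu$ basis, driven by the Hecke action. The $T_i$-action you flag as the main obstacle is exactly \cref{prop:shape-homogeneous}, already recorded in the paper: rearranging \eqref{shape-permute0} gives $T_iE_\nu = E_{s_i\nu} - \tfrac{1-t}{1-q^{\nu_i-\nu_{i+1}}t^{r_i(\nu)}}E_\nu$ when $\nu_i>\nu_{i+1}$, so the coefficient of $E_{s_i\nu}$ is $1$ and in fact all your constants $c_\mu$ equal $1$. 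The paper's proof is shorter given that \cref{thm:main0} is already on hand, and it yields the monomial-coefficient fact reused in \cref{prop:V*-f*} and \cref{thm:characterization}; your proof has the merit of staying entirely within the Hecke-algebra framework and not relying on the combinatorial formula.
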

\begin{proof}
The fact that $f_{\mu} \in V_{\lambda}$ 
comes from \cref{lem:span} and \cref{lem:f_mu-V_lambda} by taking the top homogeneous part.
Now using \cref{thm:main0}, it follows that 
for each $\tau\in S_n(\lambda)$, the only
$f_{\mu}$ for $\mu\in S_n(\lambda)$ which contains
the monomial $x^{\tau}$ with a nonzero coefficient
is $f_{\tau}$.  Thus, the elements of 
$\{f_{\mu} \ \vert \ \mu\in S_n(\lambda)\}$ are linearly independent, and hence form a basis.  See also \cite[Section 1]{CantinideGierWheeler2015} for a proof sketch of this result.
\end{proof}
\begin{cor} \label{lem:basis}
The polynomials $\{f_{\mu} \vert \ |\mu|=n\}$
form a basis for the space of polynomials
of degree $n$.    
\end{cor}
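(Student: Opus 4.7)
The plan is to deduce this from \cref{lem:V-f} by taking a direct sum over partitions of the prescribed size. The key ingredient already in hand is that the nonsymmetric Macdonald polynomials $\{E_\mu : \mu \in \NN^n\}$ form a basis of $\mcP_n$, together with the fact (proved in \cref{lem:V-f} via the combinatorial formula of \cref{thm:main0}) that for each partition $\lambda \in \YY_n$, the set $\{f_\mu : \mu \in S_n(\lambda)\}$ is a basis of $V_\lambda = \Span\{E_\mu : \mu \in S_n(\lambda)\}$.

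First I would observe that, since the $S_n$-orbits on $\NN^n$ are indexed by partitions in $\YY_n$, the basis $\{E_\mu\}$ of $\mcP_n$ splits as a disjoint union of the bases of the $V_\lambda$, giving the direct sum decomposition
\[
\mcP_n = \bigoplus_{\lambda \in \YY_n} V_\lambda.
\]
Next I would note that each $E_\mu$ is homogeneous of degree $|\mu|$, so every $V_\lambda$ is contained in the homogeneous component of degree $|\lambda|$. Restricting the direct sum to partitions of a fixed size $d$ therefore yields
\[
\mcP_n^{\,\text{hom},\,d} = \bigoplus_{\lambda \in \YY_n,\; |\lambda| = d} V_\lambda,
\]
where $\mcP_n^{\,\text{hom},\,d}$ denotes homogeneous polynomials of degree $d$ in $n$ variables.

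Finally, combining the bases $\{f_\mu : \mu \in S_n(\lambda)\}$ of each $V_\lambda$ from \cref{lem:V-f} across all $\lambda$ with $|\lambda| = d$ gives a basis of the homogeneous degree-$d$ component, namely
\[
\{f_\mu : \mu \in \NN^n,\ |\mu| = d\}.
\]
Taking $d = n$ yields the statement of the corollary. There is no real obstacle here; the only subtle point is verifying that the direct sum decomposition of $\mcP_n$ into the $V_\lambda$ is compatible with the grading by total degree, and this is immediate from the homogeneity of $E_\mu$.
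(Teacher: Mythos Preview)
Your argument is correct and is exactly the intended deduction: the paper states this as an unproven corollary of \cref{lem:V-f}, and your direct-sum argument (splitting $\mcP_n=\bigoplus_{\lambda}V_\lambda$ by the homogeneity of the $E_\mu$ and then assembling the bases of the $V_\lambda$ from \cref{lem:V-f}) is precisely the natural way to fill in the details. There is nothing to add.
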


We have similar results for interpolation ASEP polynomials.
\begin{prop}\label{prop:V*-f*}
The interpolation ASEP polynomials
$\{f_{\mu}^* \ \vert \ \mu\in S_n(\lambda)\}$ form a basis for the space $V^*_{\lambda}$. As a consequence,  $\{f_{\mu}^* \ \vert \ \mu \in \NN^n\}$ is basis of $\mcP_n$. \end{prop}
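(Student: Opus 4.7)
The plan is to deduce the proposition directly from three ingredients already collected in the excerpt: (i) $f^*_\mu\in V^*_\lambda$ for $\mu\in S_n(\lambda)$ by \cref{lem:f_mu-V_lambda}, (ii) $V^*_\lambda$ is spanned by the $|S_n(\lambda)|$ polynomials $E^*_\mu$ by \cref{lem:span}, and (iii) the usual ASEP polynomials $\{f_\mu:\mu\in S_n(\lambda)\}$ are linearly independent by \cref{lem:V-f}. The overall strategy is: pin down $\dim V^*_\lambda$, prove linear independence of the $f^*_\mu$ by descending to top homogeneous parts, conclude ``basis'' by counting, and then invoke the direct sum decomposition $\mcP_n=\bigoplus_{\lambda\in\YY_n} V^*_\lambda$ mentioned after \cref{lem:span} for the second statement.

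First I would compute $\dim V^*_\lambda$. The spanning set $\{E^*_\mu:\mu\in S_n(\lambda)\}$ from \cref{lem:span} is automatically linearly independent, because the top homogeneous component of $E^*_\mu$ is the nonsymmetric Macdonald polynomial $E_\mu$, and the $E_\mu$ for $\mu\in S_n(\lambda)$ have distinct leading monomials $x^\mu$ (hence are linearly independent). Any linear relation among the $E^*_\mu$ would descend, by taking the top degree part, to a relation among the $E_\mu$ and force all coefficients to vanish. Thus $\dim V^*_\lambda=|S_n(\lambda)|$.

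Next I would check that $\{f^*_\mu:\mu\in S_n(\lambda)\}$ is linearly independent. Suppose $\sum_{\mu\in S_n(\lambda)}c_\mu f^*_\mu=0$. Each $f^*_\mu$ has degree $|\lambda|$ with top homogeneous component the ASEP polynomial $f_\mu$ (by \cref{def:ASEP}), so extracting the degree-$|\lambda|$ part yields $\sum_\mu c_\mu f_\mu=0$. By \cref{lem:V-f} all $c_\mu$ vanish. Combined with $f^*_\mu\in V^*_\lambda$ and the dimension count above, this shows that $\{f^*_\mu:\mu\in S_n(\lambda)\}$ is a basis of $V^*_\lambda$. The second statement then follows by taking the union of these bases over all $\lambda\in\YY_n$ and using $\mcP_n=\bigoplus_{\lambda}V^*_\lambda$, noting that every composition lies in a unique orbit $S_n(\lambda)$.

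There is no real obstacle here: the argument is a routine ``descend to the top degree component and apply \cref{lem:V-f}'' move, and the only point requiring mild care is making sure each $f^*_\mu$ with $\mu\in S_n(\lambda)$ has the \emph{same} degree $|\lambda|$ so that the top-degree extraction collapses the hypothetical relation onto exactly the set $\{f_\mu:\mu\in S_n(\lambda)\}$ to which \cref{lem:V-f} applies.
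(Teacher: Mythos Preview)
Your argument is correct and follows essentially the same route as the paper: show $f^*_\mu\in V^*_\lambda$, then prove linear independence by passing to the top homogeneous part and invoking the linear independence of the $f_\mu$. The paper makes the last step slightly more concrete by observing $[x^\tau]f^*_\mu=\delta_{\tau,\mu}$ for $\tau\in S_n(\lambda)$ (via \cref{thm:main0}), which is exactly the fact underlying \cref{lem:V-f}, so the two proofs are the same in substance.
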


\begin{proof}
    The fact that $f^*_\mu\in \VV$ was proven in \cref{lem:f_mu-V_lambda}. Now for $\tau \in S_n(\lambda)$, the coefficients of $x^\tau$ in $f^*_\mu$ only depends on the top homogeneous part of $f^*_\mu$, namely $f_\mu$. And we deduce from \cref{thm:main0} that 
    $[x^\tau]f^*_\mu=\delta_{\tau,\mu}.$
As in the proof of \cref{lem:V-f}, this implies that $f^*_\mu$ are linearly independent.
\end{proof}
Recall that $P_\lambda^*$ are the interpolation symmetric Macdonald polynomials defined by \cref{def:intMacdonald}.

\begin{prop}\label{prop:symmetrization}For any partition $\lambda$, we have $$P^*_\lambda=\sum_{\mu\in\SG_n(\lambda)}f^*_\mu.$$
\end{prop}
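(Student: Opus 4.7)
The plan is to verify that $g := \sum_{\mu \in S_n(\lambda)} f^*_\mu$ satisfies the characterizing conditions of $P^*_\lambda$ from \cref{def:intMacdonald}, namely that $g$ is symmetric, has degree at most $|\lambda|$, has $[m_\lambda] g = 1$, and $g(\widetilde{\nu}) = 0$ for every partition $\nu \neq \lambda$ with $|\nu| \leq |\lambda|$. Uniqueness of $P^*_\lambda$ then finishes the proof.

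First I would use \cref{prop:T_i-f_star} to show that $T_i g = t g$ for every $1 \leq i \leq n-1$. Grouping the summands of $g$ into $s_i$-orbits on $S_n(\lambda)$: for a pair $\{\mu, s_i\mu\}$ with $\mu_i > \mu_{i+1}$ (so $(s_i\mu)_i < (s_i\mu)_{i+1}$), the first and third cases of \cref{prop:T_i-f_star} combine to give
$$T_i f^*_\mu + T_i f^*_{s_i\mu} \;=\; f^*_{s_i\mu} + \bigl[(t-1) f^*_{s_i\mu} + t f^*_\mu\bigr] \;=\; t\bigl(f^*_\mu + f^*_{s_i\mu}\bigr),$$
while the fixed points $\mu_i = \mu_{i+1}$ contribute $T_i f^*_\mu = t f^*_\mu$. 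Summing yields $T_i g = t g$, and from the definition \eqref{eq:Hecke} of $T_i$ this is equivalent to $s_i g = g$. Hence $g$ is symmetric.

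Next, by \cref{lem:f_mu-V_lambda} each $f^*_\mu$ lies in $\VV$, so $g \in \VV$. This provides both the degree bound $\deg g \leq |\lambda|$ and the vanishing $g(\widetilde{\nu}) = 0$ for every composition $\nu \notin S_n(\lambda)$ with $|\nu| \leq |\lambda|$; since $\lambda$ is the only partition in $S_n(\lambda)$, this includes all partitions $\nu \neq \lambda$ with $|\nu| \leq |\lambda|$. For the leading coefficient, the proof of \cref{prop:V*-f*} establishes $[x^\tau] f^*_\mu = \delta_{\tau,\mu}$, so $[x^\lambda] g = 1$, and by symmetry of $g$ this gives $[m_\lambda] g = 1$. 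Applying the uniqueness statement of \cref{def:intMacdonald} to $g$ yields $g = P^*_\lambda$.

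I do not anticipate a real obstacle here: the argument is essentially the symmetrization computation of the first paragraph, together with the bookkeeping afforded by \cref{lem:f_mu-V_lambda} and \cref{prop:V*-f*}. The only point requiring care is lining up the conditions verified for $g$ with the uniqueness statement in \cref{def:intMacdonald}, particularly the implicit degree restriction, which is automatic since every $f^*_\mu$ has degree $|\lambda|$.
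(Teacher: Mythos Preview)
Your proposal is correct and follows essentially the same approach as the paper: both set $g=\sum_{\mu}f^*_\mu$, use \cref{prop:T_i-f_star} to get $T_ig=tg$ (hence symmetry), and use \cref{lem:f_mu-V_lambda} for the vanishing conditions. The only minor difference is in pinning down the normalization: the paper invokes the homogeneous identity $\sum_\mu f_\mu=P_\lambda$ from \cite{CorteelMandelshtamWilliams2022} to identify the top part of $g$, whereas you extract $[x^\lambda]g=1$ from the relation $[x^\tau]f^*_\mu=\delta_{\tau,\mu}$ (itself proved via \cref{thm:main0}) and then appeal directly to the uniqueness in \cref{def:intMacdonald}.
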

The proof is similar to the proof of \cite[Lemma 3]{CantinideGierWheeler2015} or \cite[Lemma 1.24]{CorteelMandelshtamWilliams2022}.
\begin{proof}
Let $g:=\sum_{\mu\in\SG_n(\lambda)}f^*_\mu.$ From \cref{lem:f_mu-V_lambda} we know that $g\in \VV$, and thus satisfies the vanishing conditions defining the symmetric polynomial $P_\lambda$: $g(\widetilde \rho)=0$ for any partition $|\rho|\leq |\lambda|$ with $\rho\neq \lambda$.

We now show that $g$ is symmetric. Using the relations of \cref{prop:T_i-f_star}, we show that for any $i$ we have $T_i\cdot g=tg$. This implies that $s_i g=g$ meaning that $g\in\Lambda_n^{(|\lambda|)}\cap \VV$. Hence $g$ is a scalar multiple of $P^*_\lambda$.

Finally, we know from \cite[Theorem 1.11]{CorteelMandelshtamWilliams2022} that the top homogeneous part of $g$ is 
$$\sum_{\mu\in\SG_n(\lambda)}f_\mu=P_\lambda.$$
Thus by \cref{def:intMacdonald}, $g$ must be equal to $P_{\lambda}^*$.
\end{proof}

\subsection{Characterization of interpolation ASEP polynomials}\label{ssec:interpolation_ASEP}
In this section, we give a characterization of interpolation ASEP polynomials with vanishing conditions, which thus justifies their name.

We recall that \emph{the dominance order} on partitions is the partial order such that $\lambda\leq  \mu$ if $|\lambda| < |\mu|$ or $|\lambda|=|\mu|$ and 
$$\lambda_1+\dots+\lambda_i\leq \mu_1+\dots+\mu_i,\quad \text{for any $1\leq i\leq n$}.$$

Fix  $\kappa$ and $\nu$ in $\NN^n$, and let $\lambda$ and $\mu$ be the two corresponding partitions.
We then define the partial order $<$ on $\NN^n$ such that 
$\kappa\leq \nu$ if and only if either $\lambda<\mu$ or
$$\lambda=\mu \quad \text{and} \quad\kappa_1+\dots+\kappa_i\geq \nu_1+\dots+\nu_i,\quad \text{for any $1\leq i\leq n$}.$$
We have the following triangularity property of $E^*_{\mu}$.

\begin{thm}[{\cite[Theorem 3.11]{Knop1997b}}]\label{thm:triangularity}
    Given a composition $\mu\in \NN^n$, we have
    $$E^*_\mu=x^\mu+\sum_{\nu<\mu}c_{\mu,\nu} x^\nu,$$
    for some coefficients $c_{\mu,\nu}\in\QQ(q,t)$ and where the sum is taken over compositions $\nu$ smaller than $\mu$ with respect to the partial order defined above.
\end{thm}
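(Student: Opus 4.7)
The plan is to deduce the triangularity of $E^*_\mu$ from two inputs: the fact, stated in \cref{thm:Knop_Sahi}, that the top homogeneous component of $E^*_\mu$ is the nonsymmetric Macdonald polynomial $E_\mu$; and Macdonald's classical triangularity for $E_\mu$, which states
$$E_\mu = x^\mu + \sum_{\nu} d_{\mu,\nu}\, x^\nu,$$
where the sum runs over compositions $\nu \neq \mu$ that are rearrangements of the same partition as $\mu$ and satisfy the partial-sum inequalities appearing in the definition of the order in the excerpt. Granting this, the argument becomes a simple matching of orders.

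First, I would split an arbitrary monomial $x^\nu$ appearing in $E^*_\mu$ into two cases depending on whether $|\nu|=|\mu|$ or $|\nu|<|\mu|$; since $E^*_\mu$ has degree $|\mu|$ by \cref{thm:Knop_Sahi}, these two cases exhaust all possibilities. For a top-degree monomial ($|\nu|=|\mu|$), the coefficient $[x^\nu]E^*_\mu$ coincides with $[x^\nu]E_\mu$, because $E_\mu$ is exactly the degree-$|\mu|$ component of $E^*_\mu$. Macdonald's triangularity then immediately yields $[x^\mu]E^*_\mu=1$, and forces every other top-degree $x^\nu$ appearing to be a rearrangement of $\mu$ satisfying the partial-sum condition, which is precisely the condition $\nu < \mu$ in the order defined in the excerpt, restricted to compositions with the same sorted partition as $\mu$.

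For a lower-degree monomial ($|\nu|<|\mu|$), let $\lambda$ and $\tilde\mu$ denote the sorted partitions of $\nu$ and $\mu$ respectively. Then $|\lambda|=|\nu|<|\mu|=|\tilde\mu|$, so by the extended dominance order recalled in the excerpt we automatically have $\lambda<\tilde\mu$, and hence $\nu<\mu$ in the combined order on compositions. Combining the top-degree and lower-degree analyses gives exactly the triangularity statement.

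The only substantive obstacle is that Macdonald's classical triangularity of $E_\mu$ is not proved in the excerpt and would have to be imported from \cite{Macdonald1995} or \cite{cherednik-1995b}, where it is usually established via the triangular action of the Cherednik operators on the monomial basis. If one wished to avoid this external input, an alternative route would be to induct on $\mu$ in the combined order directly at the inhomogeneous level: construct a polynomial of the form $x^\mu + \sum_{\nu<\mu} c_{\mu,\nu}\,x^\nu$ that satisfies the vanishing conditions of \cref{thm:Knop_Sahi} and invoke uniqueness to conclude that it must equal $E^*_\mu$. The technical heart of that alternative would be explicitly producing the coefficients $c_{\mu,\nu}$, which is precisely what the direct approach via the top homogeneous component and Macdonald's theorem bypasses.
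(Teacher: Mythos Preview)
The paper does not give its own proof of this statement; it is simply cited from \cite{Knop1997b}, so there is nothing in the paper to compare your argument against. Knop's original proof proceeds by showing that the inhomogeneous Cherednik--Knop operators act triangularly on the monomial basis with respect to this order, which is rather different in spirit from your reduction to the homogeneous case.

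Your overall strategy is sound, but there is one factual slip that leaves a gap. You state Macdonald's triangularity for $E_\mu$ as a sum over compositions $\nu$ that are \emph{rearrangements of the same partition as $\mu$}; this is false. For instance, $E_{(2,0)}$ contains the monomial $x_1x_2$, and $(1,1)$ is not a rearrangement of $(2,0)$. The correct triangularity of $E_\mu$ is with respect to the full order on same-size compositions: $\nu<\mu$ means either the sorted partition of $\nu$ is strictly dominated by that of $\mu$, or they have the same sort and the reversed partial-sum condition holds. Your argument as written therefore fails to cover top-degree monomials $x^\nu$ with $|\nu|=|\mu|$ but $\nu^+$ strictly dominated by $\mu^+$.

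The fix is immediate once you use the correct statement: any such $\nu$ still satisfies $\nu<\mu$ in the paper's order, since strict dominance of the sorted partitions is exactly the first clause of that order. With this correction, your two-case split ($|\nu|=|\mu|$ handled by the homogeneous triangularity, $|\nu|<|\mu|$ automatic from the extended dominance convention) is complete and correct.
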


\begin{thm}\label{thm:characterization}
    Fix $\mu\in S_n(\lambda)$ of size $d$. Then $f^*_\mu(x_1,\dots,x_n)$ is the unique polynomial $g\in \mcP_n^{(d)}$ such that:
    \begin{itemize}
        \item for any composition $\nu$ such that $|\nu|\leq |\mu|$ and $\nu\notin S_n(\lambda)$, we have
        $g(\widetilde{\nu})=0$.
        \item for $\tau \in S_n(\lambda)$, then 
$$[x^\tau]g =\delta_{\tau,\mu}.$$
    \end{itemize}
\end{thm}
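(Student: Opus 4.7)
The plan is to establish existence and uniqueness separately, drawing on the structural results of \cref{ssec:ASEP}. The statement is essentially a reformulation of the basis property of $\{f^*_\nu\}_{\nu \in S_n(\lambda)}$ combined with the triangularity of the coefficients of its elements on monomials indexed by $S_n(\lambda)$, so I do not expect a serious obstacle.

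For \emph{existence}, I claim $g = f^*_\mu$ satisfies both conditions. Since $E^*_\lambda$ has degree $d = |\lambda|$ by \cref{thm:Knop_Sahi} and each Hecke operator $T_i$ preserves degree (visible from \eqref{eq:Hecke}, as $(f - s_i f)/(x_i - x_{i+1})$ drops degree by one and multiplying back by $tx_i - x_{i+1}$ restores it), we have $f^*_\mu \in \mcP_n^{(d)}$. The vanishing condition in item (1) is exactly the assertion $f^*_\mu \in V^*_\lambda$, which is \cref{lem:f_mu-V_lambda}. For item (2), the top homogeneous component of $f^*_\mu$ is the ASEP polynomial $f_\mu$ (noted immediately after \cref{def:ASEP}), so for any $\tau$ of full degree $d$ we have $[x^\tau] f^*_\mu = [x^\tau] f_\mu$; the identity $[x^\tau] f_\mu = \delta_{\tau, \mu}$ for $\tau \in S_n(\lambda)$ is precisely the triangularity observation used in the proofs of \cref{lem:V-f} and \cref{prop:V*-f*}, which comes from the combinatorial formula \cref{thm:main0}.

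For \emph{uniqueness}, suppose $g \in \mcP_n^{(d)}$ satisfies (1) and (2). Condition (1) together with $d = |\lambda|$ is the defining condition of $V^*_\lambda$, so $g \in V^*_\lambda$. By \cref{prop:V*-f*}, the family $\{f^*_\nu : \nu \in S_n(\lambda)\}$ is a basis of $V^*_\lambda$, hence we may write
\begin{equation*}
    g = \sum_{\nu \in S_n(\lambda)} c_\nu\, f^*_\nu
\end{equation*}
for unique scalars $c_\nu \in \QQ(q,t)$. Applying the coefficient functional $[x^\tau]$ for any $\tau \in S_n(\lambda)$ and using the triangularity $[x^\tau] f^*_\nu = \delta_{\tau,\nu}$ recalled above, we obtain $[x^\tau] g = c_\tau$. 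Condition (2) then forces $c_\tau = \delta_{\tau, \mu}$ for every $\tau \in S_n(\lambda)$, so $g = f^*_\mu$, completing the proof.
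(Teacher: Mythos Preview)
Your proof is correct. The existence half is identical to the paper's argument. For uniqueness, the paper takes a slightly different route: it sets $h=f^*_\mu-g$, expands $h$ in the $E^*_\tau$ basis of $V^*_\lambda$ via \cref{lem:span}, and then invokes Knop's triangularity theorem (\cref{thm:triangularity}) to derive a contradiction from a hypothetical maximal nonzero coefficient. Your version instead expands $g$ directly in the $f^*_\nu$ basis using \cref{prop:V*-f*} and reads off the coefficients from the already-established diagonality $[x^\tau]f^*_\nu=\delta_{\tau,\nu}$. Your argument is arguably cleaner, since \cref{prop:V*-f*} has already packaged exactly the linear-algebra fact you need; the paper's route is a bit more self-contained in that it appeals to the $E^*$ triangularity rather than to the $f^*$ basis result, but both are short and valid.
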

Recall that the first condition is equivalent to saying that $g\in \VV$.

\begin{proof}
    The fact that $f^*_\mu\in \VV$ was proven in \cref{lem:f_mu-V_lambda}. Now for $\tau \in S_n(\lambda)$, the coefficients of $x^\tau$ in $f^*_\mu$ only depends on the top homogeneous part of $f^*_\mu$, namely $f_\mu$. And we deduce from \cref{thm:main0} that 
    $[x^\tau]f^*_\mu=\delta_{\tau,\mu}.$

Let us now prove that $f^*_\mu$ is the unique polynomial satisfying the properties of the proposition. Let $g$ be a polynomial satisfying these properties and set $h:=f^*_\mu-g$. We want to prove that $h=0$. We have that $h\in\VV$ and that $[x^\tau]h=0$ for $\tau\in S_n(\lambda)$.
By \cref{lem:span}, we can then expand it
$h=\sum_{\tau\in S_n(\lambda)}d_\tau E^*_\tau.$
We want to prove that the coefficients $d_\tau$ are all zero. Suppose that this is not the case, and let $\kappa$ be a maximal element in the set $\{\tau\in S_n(\lambda):d_\tau\neq 0\}$.
We then have from \cref{thm:triangularity} that
$[x^\kappa]h=d_\kappa\neq 0$ which is a contradiction.
\end{proof}

\section{An algebraic recursion for \texorpdfstring{$f^*_{\mu}$}{f*}  
when \texorpdfstring{$\mu$}{the composition} is packed}\label{sec:packed}

We start this section by recalling the \emph{two-line recursion} for homogeneous ASEP polynomials established in \cite{CorteelMandelshtamWilliams2022}, see \cref{lem:F_decomposition1}; its combinatorial analogue in terms of multiline queues is in \cref{lem:recursive}.
Our goal will be to 
give an analogue of \cref{lem:F_decomposition1} for interpolation ASEP polynomials indexed by \emph{packed compositions}, see \cref{thm:recurrence}.

Given a composition $\nu$, let $\nu^-:=(\nu^-_1,\dots,\nu^-_n)$,
where
$\nu^-_i=\max(\nu_i-1,0)$.
\begin{lem}[{\cite[Lemma 3.2]{CorteelMandelshtamWilliams2022}}]\label{lem:F_decomposition1}
Fix a composition $\mu$. There exists a family of coefficients $a_\mu^\nu\in\QQ(q,t)$ such that 
\begin{equation}\label{eq:F_decomposition1}
  f_\mu(x_1,\dots,x_n)=\left(\prod_{i:\mu_i>0}x_i\right)\sum_{\nu}a^\nu_{\mu} f_{\nu^-}(x_1,\dots,x_n),  
\end{equation}
where the sum runs over compositions $\nu$ which are permutations of $\mu$ after removing the 1's from $\mu$.
\end{lem}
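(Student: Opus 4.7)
The plan is to derive this recursion combinatorially by peeling off the bottom row of a multiline queue, exploiting the formula $f_{\mu}=F_{\mu}=\sum_{Q\in\MLQ(\mu)}\wt(Q)$ from \cref{thm:main0}.

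First, I would fix $Q\in\MLQ(\mu)$ and focus on row $2$. By \cref{def:MLQ0}, its ball labels form a composition $\nu$ that is a permutation of $\lambda^{(2)}=\langle L^{m_L},\dots,2^{m_2},0^{m_1+m_0}\rangle$; equivalently, $\nu$ is obtained from $\mu$ by replacing every $1$ by $0$ and permuting, which matches exactly the index set appearing in the claim. Factor $\wt(Q)$ as
\begin{equation*}
\wt(Q)=\Bigl(\textstyle\prod_{i:\mu_i>0}x_i\Bigr)\cdot\wt(\rho)\cdot W(Q'),
\end{equation*}
where $\rho$ is the classic layer between rows $1$ and $2$ and $W(Q')$ collects the ball- and pairing-weights from rows $2,\dots,L$.

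Next, I would exhibit the bijection
\begin{equation*}
\MLQ(\mu)\ \longleftrightarrow\ \bigsqcup_{\nu}\bigl\{\text{classic layers }\rho\colon\mu\to\nu\bigr\}\times\MLQ(\nu^{-}),
\end{equation*}
where $\hat Q\in\MLQ(\nu^-)$ is produced from rows $2,\dots,L$ of $Q$ by subtracting $1$ from every ball label and from every row index. Validity of $\hat Q$ as a multiline queue follows by matching $\lambda'^{(r)}$, for $\lambda'$ the partition of $\nu^-$, against the shifted row structure of $Q$. I then have to check weight invariance, namely $W(Q')=\wt(\hat Q)$: ball-weights depend only on which columns are occupied (not on labels), and in \eqref{eq:pair} the label $a$ and row $r$ enter the pairing-weight only through the combination $a-r+1$, together with the combinatorial statistics $\skipped$ and $\free$ which are manifestly invariant under relabeling.

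Assembling the pieces, I sum $\wt(Q)$ successively over $\hat Q$, then $\rho$, then $\nu$, and obtain
\begin{equation*}
f_{\mu}\ =\ \Bigl(\textstyle\prod_{i:\mu_i>0}x_i\Bigr)\sum_{\nu}a^{\nu}_{\mu}\,f_{\nu^{-}},\qquad a^{\nu}_{\mu}:=\sum_{\rho}\wt(\rho)\in\QQ(q,t),
\end{equation*}
invoking $F_{\nu^-}=f_{\nu^-}$ once more. The rational-function claim on $a^\nu_\mu$ is immediate from \eqref{eq:pair}. The main obstacle is essentially bookkeeping, with the one genuinely delicate check being the invariance of the pairing-weight under the simultaneous shift $(a,r)\mapsto(a-1,r-1)$; everything else follows directly from the definitions.
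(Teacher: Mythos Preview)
Your proposal is correct and follows essentially the same approach as the paper (and the original source \cite{CorteelMandelshtamWilliams2022}): decompose a multiline queue into its bottom two-row layer and the remaining $L-1$ rows, define $a_{\mu}^{\nu}$ as the pairing-weight generating function of the bottom layer (this is exactly \eqref{amulambda} in \cref{sec:2line}), and use the invariance of $a-r+1$ under the simultaneous shift of labels and row indices to identify the top piece with $f_{\nu^-}$. The paper packages the bottom layer as a ``generalized two-line queue'' in \cref{def:two-row}, but the content is identical to what you wrote.
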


\begin{definition}
    For fixed $k,n$ with $1 \leq k \leq n$, we say that a composition 
    $\mu=(\mu_1,\dots,\mu_n)$ is \emph{packed} of \emph{type $(k,n)$} if 
    $\mu_i \neq 0$ for $i \leq k$ and 
    $\mu_i=0$ for $i>k$.
    Let $\Pack(k,n)$ denote the set of all packed compositions of type $(k,n)$.
\end{definition}

\begin{thm}\label{thm:recurrence}
    Let $\mu\in \Pack(k,n)$ be a packed composition. Then
    \begin{equation}\label{eq:thm_recurrence}
      f_{\mu}^{*}(x_1,\dots,x_n)=\prod_{i=1}^k (x_i -t^{-n+1})\sum_{\nu}a^\nu_{\mu} q^{|\nu^-|}f^*_{\nu^-}\left(\frac{x_1}{q},\dots,\frac{x_n}{q}\right),  
    \end{equation}
    where $a_\mu^{\nu}$ are the coefficients of \cref{eq:F_decomposition1} (see also~\cref{amulambda}). 
\end{thm}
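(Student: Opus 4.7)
The plan is to apply the uniqueness characterization of $f^*_\mu$ from \cref{thm:characterization}. Let $g$ denote the right-hand side of \eqref{eq:thm_recurrence}. I would verify that (i) $\deg g \leq |\mu|$, (ii) $[x^\tau]g = \delta_{\tau,\mu}$ for every $\tau \in S_n(\lambda)$, and (iii) $g(\widetilde\nu) = 0$ for every composition $\nu$ with $|\nu| \leq |\mu|$ and $\nu \notin S_n(\lambda)$. Uniqueness will then force $g = f^*_\mu$.

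Conditions (i) and (ii) follow by inspecting the top homogeneous component of $g$. The factor $\prod_{i=1}^k(x_i - t^{-n+1})$ has top part $\prod_{i=1}^k x_i$, and because $|\nu^-| = |\mu|-k$ is the same for every $\nu$ in the sum, the top part of $q^{|\nu^-|} f^*_{\nu^-}(\xx/q)$ is exactly $f_{\nu^-}(\xx)$. Multiplying and invoking \cref{lem:F_decomposition1} (noting that $\{i:\mu_i>0\}=[k]$ because $\mu$ is packed) identifies the top part of $g$ as $f_\mu$. This bounds $\deg g$ by $|\mu|$, and since the monomials $x^\tau$ for $\tau \in S_n(\lambda)$ all have degree $|\mu|$, we obtain $[x^\tau]g = [x^\tau]f_\mu = \delta_{\tau,\mu}$ by the monomial triangularity of the ASEP basis (as used in the proof of \cref{lem:V-f}).

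For the vanishing (iii), I would write $g(\widetilde\nu) = \prod_{i=1}^k(\widetilde\nu_i - t^{-n+1})\cdot \tilde g(\widetilde\nu)$ with $\tilde g(\xx) := \sum_\nu a^\nu_\mu q^{|\nu^-|}f^*_{\nu^-}(\xx/q)$, and record the structural fact that the rescaling $\tilde h(\mathbf{y}) := q^{-|\bar\lambda|}\tilde g(q\mathbf{y}) = \sum_\nu a^\nu_\mu f^*_{\nu^-}(\mathbf{y})$ lies in $V^*_{\bar\lambda}$, where $\bar\lambda$ is the partition sort of $\mu^-$ (so $|\bar\lambda|=|\mu|-k$). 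I would then case-split on where the zeros of $\nu$ sit. If the first zero of $\nu$ is at some index $i_0 \leq k$, then $\nu_1,\ldots,\nu_{i_0-1}>0$ forces $k_{i_0}(\nu) = n-1$, hence $\widetilde\nu_{i_0}=t^{-n+1}$ and the product annihilates $g(\widetilde\nu)$. If instead $\nu_i\geq 1$ for every $i\in[n]$, then subtracting $\mathbf{1}$ preserves relative order, so $\widetilde\nu/q = \widetilde{\nu-\mathbf{1}}$; a size comparison $|\nu-\mathbf{1}| = |\nu|-n \leq |\mu|-k = |\bar\lambda|$ is strict unless $n=k$ and $|\nu|=|\mu|$, and in either situation the assumption $\nu\notin S_n(\lambda)$ translates into $\nu-\mathbf{1}\notin S_n(\bar\lambda)$, so $\tilde h(\widetilde{\nu-\mathbf{1}})=0$ follows from $\tilde h \in V^*_{\bar\lambda}$.

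The hard part will be the remaining case: $\nu_i \geq 1$ for $i \leq k$ but $\nu_j=0$ for some $j>k$. Here $\widetilde\nu/q$ carries a $q^{-1}$-component at the smallest such position $j_0$, so it is no longer of the form $\widetilde\rho$ for any composition $\rho$, and the vanishing cannot come from that of any single $f^*_{\nu^-}$. Nevertheless $\widetilde\nu_{j_0}=t^{-n+1}$ as a consequence of $\nu_1,\ldots,\nu_{j_0-1}$ all being positive, so the cancellation of $\tilde g(\widetilde\nu)$ must arise from a nontrivial linear relation among the $f^*_{\nu^-}(\widetilde\nu/q)$ enforced by the specific coefficients $a^\nu_\mu$ (this cancellation is visible, for example, in the direct computation for $\mu=(2,0)$, $\nu=(1,0)$, where the two surviving terms precisely cancel). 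I would attempt to close this case by induction on the number of zeros of $\nu$ at positions $>k$, reducing each successive instance to the all-positive case by exploiting the Hecke-operator/multi-line-queue derivation of the coefficients $a^\nu_\mu$ behind \cref{lem:F_decomposition1}; pinning down the precise residual identity among the $a^\nu_\mu$ is the main technical obstacle.
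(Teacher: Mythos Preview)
Your approach via the characterization in \cref{thm:characterization} stalls exactly where you say it does, and the gap is genuine. In the case $\nu_i \geq 1$ for $i \leq k$ but $\nu_{j_0} = 0$ for some $j_0 > k$, neither mechanism fires: the product $\prod_{i=1}^k(\widetilde\nu_i - t^{-n+1})$ survives (since $j_0 > k$), and $\widetilde\nu/q$ is not of the form $\widetilde\rho$ for any composition $\rho$, so membership of $\tilde h$ in $V^*_{\bar\lambda}$ says nothing. The cancellation you observe for $\mu=(2,0)$ is real, but establishing it in general would require a combinatorial identity among the $a_\mu^\nu$ essentially equivalent to the theorem; your suggested induction via Hecke operators is not spelled out, and it is not clear how to make it work without effectively rebuilding the machinery of \cref{sec:tworow,sec:proof}.

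The paper sidesteps this by reversing the direction: instead of verifying that $g$ satisfies the characterization, it starts from $f_\mu^*$ and factors it. The key input you are missing is Knop's \emph{extra vanishing property} (\cref{lem:extravanishing}): $E_\mu^*(\widetilde\nu)=0$ whenever $\mu \npreceq \nu$ in the order of \cref{def:precedence-order}. For packed $\mu$, this yields $f_\mu^*(\widetilde\rho)=0$ whenever $\rho$ has a zero in some position $\leq k$ (\cref{lem:2}), which is exactly what is needed to peel off the factors $(x_i - t^{-n+1})$ one at a time via \cref{thm:Knop-Sahi-interpolation} applied to successive remainders. The same extra vanishing, now applied to show $\mu \npreceq \rho^+$ when $|\rho|<|\mu|-k$, forces the rescaled quotient $Q(q\xx)$ to expand purely in $f^*_\rho$ with $|\rho| = |\mu|-k$; the coefficients are then identified by passing to top degree, which is your step (ii). The surplus vanishing beyond the defining conditions is the engine of the whole argument, and without it the hard case of your plan does not close.
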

Recall that $a_\mu^{\nu}$ is  0 unless $\nu$ is a permutation of $\mu$ after removing the 1's of $\mu$. In particular, if $\nu$ contributes to the sum of \cref{eq:thm_recurrence}, then $|\nu^-|=|\mu|-k$.

    \cref{thm:recurrence} will be proved in 3 steps:
    \begin{enumerate}[label=(Step \arabic*)]
        \item \label{first}  We prove that we can write  
\begin{equation} \label{eq:factor}
f_{\mu}^{*}(x_1,\dots,x_n) = \prod_{i=1}^k (x_i -t^{-n+1})\ Q(x_1,\dots,x_n),
\end{equation}
    where $\deg(Q(x_1,\dots,x_n)) = |\mu|-k$.
    \item \label{second} In \cref{eq:factor}, we have  $$Q(x_1,\dots,x_n) = \sum_{\nu:|\nu|=|\mu|-k}
    b_{\mu}^{\nu} \ q^{|\mu|-k} \ f_{\nu}^{*}\left(\frac{x_1}{q}, \frac{x_2}{q},\dots,\frac{x_n}{q}\right)$$
    where $b_{\mu}^{\nu} \in \QQ(q,t)$.
    
    \item \label{third} The coefficients $b_{\mu}^{\nu}$ in \ref{second} 
    are directly related to the coefficients $a_{\mu}^{\nu}$
    from
    \cref{eq:F_decomposition1}.
    More precisely, for any composition $\nu$ without parts of size 1, we have $b^{\nu^-}_{\mu}=a^{\nu}_{\mu}$.
\end{enumerate}

Before proving the theorem, we need a little bit of preparation.
We start by recalling the \emph{shape permuting
operator} from \cite[Equation (17)]{HHL3}.

\begin{prop}[\cite{HHL3}]
\label{prop:shape-homogeneous}
Let $\nu$ be a composition, and suppose $\nu_i>\nu_{i+1}$.
Write
\[
r_i(\nu)=
\#\{j<i \mid \nu_{i+1}<\nu_{j}\leq \nu_i\}
+
\#\{j>i \mid \nu_{i+1}\leq \nu_j < \nu_i\}.
\]
Then
\begin{equation}
\label{shape-permute0}
E_{s_i \nu}(\xx;q,t) = 
\left(T_i + \frac{1-t}{1-q^{\nu_{i}-\nu_{i+1}}t^{r_i(\nu)}}\right)
E_{\nu}(\xx;q,t).
\end{equation}
\end{prop}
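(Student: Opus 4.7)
The plan is to deduce the identity from the standard Cherednik--Dunkl (Y-operator) theory for nonsymmetric Macdonald polynomials, which is the natural home of such ``shape-permuting'' identities. Recall that there exist commuting operators $Y_1,\dots,Y_n$ on $\mcP_n$ for which each $E_\nu$ is a joint eigenvector with an explicit spectrum $Y_j\,E_\nu = \rho_j(\nu)\,E_\nu$; moreover the $E_\nu$ are characterized (up to scalar) by their spectrum together with a leading-monomial triangularity property. I would take this as the black-box input, as the authors are clearly working in this setup (they already use $T_i$'s satisfying the Hecke relations in \eqref{eq:Hecke_algebra}).

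Next I would introduce the Lusztig-type intertwiner
\[
\psi_i \;=\; T_i \;+\; \frac{1-t}{1 - Y_{i+1}Y_i^{-1}},
\]
and verify (from the braid/commutation relations between the $T_i$'s and $Y_j$'s) that $\psi_i Y_i = Y_{i+1}\psi_i$, $\psi_i Y_{i+1} = Y_i\psi_i$, and $\psi_i Y_j = Y_j\psi_i$ for $j\notin\{i,i+1\}$. Consequently $\psi_i E_\nu$ is still a joint eigenvector of the $Y_j$'s but with spectrum $s_i\!\cdot\!\rho(\nu) = \rho(s_i\nu)$, so it must be a scalar multiple of $E_{s_i\nu}$. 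The scalar is pinned down by evaluating $Y_{i+1}Y_i^{-1}$ on $E_\nu$, which by the explicit Cherednik eigenvalue formula equals $q^{\nu_i-\nu_{i+1}}t^{r_i(\nu)}$ (up to conventions); this is exactly the number that appears in the denominator in \eqref{shape-permute0}. Substituting this eigenvalue into $\psi_i$ replaces the operator $Y_{i+1}Y_i^{-1}$ by its scalar action on $E_\nu$, and yields
\[
\Bigl(T_i + \tfrac{1-t}{1-q^{\nu_i-\nu_{i+1}}t^{r_i(\nu)}}\Bigr) E_\nu \;=\; c_\nu\, E_{s_i\nu}
\]
for some $c_\nu\in\QQ(q,t)$.

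It remains to show $c_\nu=1$. I would do this by comparing leading monomials in the triangular expansion of both sides (the analog for homogeneous $E_\nu$ of the triangularity used in \cref{thm:triangularity}): when $\nu_i>\nu_{i+1}$, the leading monomial of $E_{s_i\nu}$ is $x^{s_i\nu}$, and on the left-hand side the term $x^{s_i\nu}$ arises only from applying $T_i$ to the leading term $x^\nu$ of $E_\nu$, yielding coefficient $1$. Hence $c_\nu=1$ and we obtain \eqref{shape-permute0}.

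The main obstacle is the bookkeeping of the Cherednik eigenvalue $\rho_j(\nu)$: one must identify the exponent $r_i(\nu)$ defined in the statement with the exponent of $t$ arising from the combinatorial formula for $\rho_{i+1}(\nu)/\rho_i(\nu)$. This amounts to a careful comparison between the combinatorial description of the $Y$-spectrum (involving the Bruhat-style statistic that counts, for each position, how many larger/equal parts lie to its left versus right) and the explicit count in the definition of $r_i(\nu)$; once that match is made, everything else is a formal consequence of the intertwiner calculus.
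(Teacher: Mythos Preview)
The paper does not give a proof of this proposition: it is simply quoted from \cite{HHL3} (the Haglund--Haiman--Loehr paper) and used as a black box. So there is no ``paper's own proof'' to compare against.

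Your sketch is the standard derivation of the shape-permuting relation via Cherednik's intertwiners, and it is correct in outline. The operator $\psi_i = T_i + (1-t)/(1-Y_{i+1}Y_i^{-1})$ does satisfy $\psi_i Y_j = Y_{s_i(j)}\psi_i$, so $\psi_i E_\nu$ has the $Y$-spectrum of $E_{s_i\nu}$; since the joint $Y$-spectrum is simple, this forces $\psi_i E_\nu = c_\nu E_{s_i\nu}$, and the leading-monomial comparison pins down $c_\nu=1$. The only place where genuine care is needed, as you note, is matching the combinatorial exponent $r_i(\nu)$ with the power of $t$ in $\rho_{i+1}(\nu)/\rho_i(\nu)$; this is a routine but convention-sensitive computation (different sources normalize $Y_j$ and the spectrum $\rho_j(\nu)$ differently, which can flip signs in the $q$- or $t$-exponent). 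Once that bookkeeping is done, the argument is complete. This is essentially the proof one finds in the original sources (Cherednik, Knop, Sahi), and presumably what \cite{HHL3} records.
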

 
\begin{lem}[{\cite[Lemma 3.1]{Knop1997b}}]\label{lem:Heckeop-evaluation}
    Fix a polynomial $f\in\mcP_n$ and a composition $\mu\in\NN^n$. Then, for any $1\leq i\leq n-1$, $(T_i f)(\Tilde{\mu})$ is a linear combination of $f(\widetilde{\mu})$ and  $f(\widetilde{s_i\mu})$.
\end{lem}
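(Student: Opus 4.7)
The plan is to exploit the explicit form of $T_i$ from \eqref{eq:Hecke} together with a careful analysis of how the statistic $k_j(\mu)$ transforms under the simple transposition $s_i$. Writing $\widetilde\mu = (\alpha_1,\dots,\alpha_n)$ with $\alpha_j := q^{\mu_j} t^{-k_j(\mu)}$, direct evaluation of the formula $T_i f = tf - \frac{tx_i - x_{i+1}}{x_i - x_{i+1}}(1 - s_i) f$ gives
\[
(T_i f)(\widetilde\mu) = t\, f(\widetilde\mu) - \frac{t\alpha_i - \alpha_{i+1}}{\alpha_i - \alpha_{i+1}}\bigl(f(\widetilde\mu) - f(s_i \cdot \widetilde\mu)\bigr),
\]
so the statement reduces to expressing $f(s_i\cdot\widetilde\mu)$ in terms of the two allowed evaluations $f(\widetilde\mu)$ and $f(\widetilde{s_i\mu})$.

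The core step is to compute $k_j(s_i\mu)$ in terms of $k_j(\mu)$ for each $j$. For $j\notin\{i,i+1\}$, the sets defining $k_j$ in \eqref{eq:k} involve the positions $i$ and $i+1$ only through the cardinality contributions $[\mu_i \gtrless \mu_j]$ and $[\mu_{i+1} \gtrless \mu_j]$, which depend only on the multiset $\{\mu_i,\mu_{i+1}\}$; since this multiset is invariant under $s_i$, we obtain $k_j(s_i\mu) = k_j(\mu)$ and the corresponding $\alpha_j$ is unchanged. For $j\in\{i,i+1\}$, I would run a short case analysis using the indicators $[\mu_{i+1} \geq \mu_i]$ and $[\mu_i > \mu_{i+1}]$ to establish: if $\mu_i \neq \mu_{i+1}$, then $k_i(s_i\mu)=k_{i+1}(\mu)$ and $k_{i+1}(s_i\mu)=k_i(\mu)$, so that $\widetilde{s_i\mu} = s_i\cdot\widetilde\mu$; while if $\mu_i = \mu_{i+1}$, the same bookkeeping yields $k_i(\mu)-k_{i+1}(\mu)=1$, hence $\alpha_{i+1} = t\alpha_i$, and the numerator $t\alpha_i - \alpha_{i+1}$ vanishes.

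With these two cases in hand, the conclusion is immediate. When $\mu_i \neq \mu_{i+1}$, the identity $f(s_i\cdot\widetilde\mu) = f(\widetilde{s_i\mu})$ turns the displayed formula into the explicit combination
\[
(T_i f)(\widetilde\mu) = \Bigl(t - \tfrac{t\alpha_i - \alpha_{i+1}}{\alpha_i - \alpha_{i+1}}\Bigr) f(\widetilde\mu) + \tfrac{t\alpha_i - \alpha_{i+1}}{\alpha_i - \alpha_{i+1}} f(\widetilde{s_i\mu}).
\]
When $\mu_i = \mu_{i+1}$, we have $s_i\mu = \mu$ and the cancellation $t\alpha_i = \alpha_{i+1}$ collapses the formula to $(T_i f)(\widetilde\mu) = t\, f(\widetilde\mu)$, which is trivially a linear combination of $f(\widetilde\mu)$ and $f(\widetilde{s_i\mu}) = f(\widetilde\mu)$.

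The main obstacle is the combinatorial bookkeeping in the second step: one must correctly track the statistic $k_j$ under $s_i$ and, crucially, verify the cancellation of the numerator $t\alpha_i - \alpha_{i+1}$ in the equal-parts case, since otherwise the evaluation $f(s_i\cdot\widetilde\mu)$ at a point unrelated to either $\widetilde\mu$ or $\widetilde{s_i\mu}$ would survive in the formula. Everything else is formal manipulation of \eqref{eq:Hecke}.
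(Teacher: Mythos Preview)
Your argument is correct. The paper does not give its own proof of this lemma; it simply cites \cite[Lemma~3.1]{Knop1997b}, so there is nothing in the paper to compare against. Your approach---evaluating the explicit formula \eqref{eq:Hecke} at $\widetilde\mu$ and reducing the problem to the identity $\widetilde{s_i\mu}=s_i\cdot\widetilde\mu$ when $\mu_i\neq\mu_{i+1}$ together with the cancellation $t\alpha_i=\alpha_{i+1}$ when $\mu_i=\mu_{i+1}$---is exactly the natural (and standard) proof, and the bookkeeping you outline for $k_j(s_i\mu)$ checks out. One small point worth making explicit: the evaluation of the rational expression at $\widetilde\mu$ is legitimate because $\alpha_i\neq\alpha_{i+1}$ always holds (in the equal-parts case $\alpha_i-\alpha_{i+1}=(1-t)\alpha_i\neq 0$, and in the unequal-parts case the $q$-exponents already differ), so no limiting argument is needed.
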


We use this lemma to give an analogue of \cref{prop:shape-homogeneous} for interpolation polynomials.
\begin{prop}\label{prop:shape}
Let $\nu$ be a composition, and suppose $\nu_i>\nu_{i+1}$. Then
    \begin{equation}
\label{shape-permute}
E^*_{s_i \nu}(\xx;q,t) = 
\left(T_i + \frac{1-t}{1-q^{\nu_{i+1}-\nu_i}t^{r_i(\nu)}}\right)
E^*_{\nu}(\xx;q,t).
\end{equation}
\end{prop}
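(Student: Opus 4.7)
The plan is to set $g := (T_i + c)E^*_\nu$ with $c$ the scalar from the statement and verify that $g$ satisfies the three defining properties of $E^*_{s_i\nu}$ from \cref{thm:Knop_Sahi}---namely degree at most $|\nu|$, coefficient of $x^{s_i\nu}$ equal to $1$, and vanishing at $\widetilde{\mu}$ for every $\mu \neq s_i\nu$ with $|\mu| \leq |\nu|$---so that $g = E^*_{s_i\nu}$ by uniqueness. The degree bound is immediate because $T_i$ is degree-preserving, and \cref{lem:stability} applied to $E^*_\nu \in \VV$ (where $\lambda$ is the underlying partition of $\nu$, hence also of $s_i\nu$) yields $g \in \VV$, which handles vanishing at every $\widetilde{\mu}$ for $\mu$ that is not a permutation of $\lambda$.

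For $\mu \in \SG_n(\lambda) \setminus \{\nu, s_i\nu\}$, \cref{lem:Heckeop-evaluation} realizes $(T_i E^*_\nu)(\widetilde{\mu})$ as a linear combination of the two values $E^*_\nu(\widetilde{\mu})$ and $E^*_\nu(\widetilde{s_i\mu})$, both of which vanish. The essential case is $\mu = \nu$. The key input is the identity $\widetilde{s_i\nu} = s_i \cdot \widetilde{\nu}$ (where on the right $s_i$ swaps the $i$-th and $(i+1)$-th coordinates), which under the hypothesis $\nu_i > \nu_{i+1}$ follows from \eqref{eq:k} by checking $k_i(s_i\nu) = k_{i+1}(\nu)$, $k_{i+1}(s_i\nu) = k_i(\nu)$, and $k_j(s_i\nu) = k_j(\nu)$ for $j \neq i, i+1$. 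Consequently $(s_i E^*_\nu)(\widetilde{\nu}) = E^*_\nu(\widetilde{s_i\nu}) = 0$, and \eqref{eq:Hecke} reduces to
\begin{equation*}
(T_i E^*_\nu)(\widetilde{\nu}) \;=\; \frac{(1-t)\,\widetilde{\nu}_{i+1}}{\widetilde{\nu}_i - \widetilde{\nu}_{i+1}}\; E^*_\nu(\widetilde{\nu}).
\end{equation*}
Using $\widetilde{\nu}_i / \widetilde{\nu}_{i+1} = q^{\nu_i - \nu_{i+1}} t^{k_{i+1}(\nu) - k_i(\nu)}$ together with the counting identity $k_{i+1}(\nu) - k_i(\nu) = r_i(\nu)$ (a second case analysis on \eqref{eq:k}), the displayed ratio matches $-c$, so that $g(\widetilde{\nu}) = 0$ since $E^*_\nu(\widetilde{\nu}) \neq 0$ by \cref{rmk:Enu(nu)}.

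For the normalization, the plan is to pass to top homogeneous parts. As $T_i$ preserves degree and $\mathrm{top}(E^*_\nu) = E_\nu$, we obtain $\mathrm{top}(g) = (T_i + c)E_\nu$, which equals $E_{s_i\nu}$ by \cref{prop:shape-homogeneous} (the scalars $c$ align in the top-degree identity). Triangularity of the nonsymmetric Macdonald polynomials then gives $[x^{s_i\nu}]g = [x^{s_i\nu}]E_{s_i\nu} = 1$, closing out the verification. The main obstacle in this plan is the combinatorial bookkeeping in the middle paragraph: both $\widetilde{s_i\nu} = s_i\widetilde{\nu}$ and $k_{i+1}(\nu) - k_i(\nu) = r_i(\nu)$ demand splitting into several cases according to whether $j<i$ or $j>i+1$ and the relative ordering of $\nu_j$ with respect to $\nu_i$ and $\nu_{i+1}$; these are elementary but delicate, and are the only step where the hypothesis $\nu_i > \nu_{i+1}$ is used essentially.
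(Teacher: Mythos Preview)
Your proof is correct and uses the same core ingredients as the paper---\cref{lem:stability}, \cref{lem:Heckeop-evaluation}, and passage to top homogeneous parts via \cref{prop:shape-homogeneous}---but organizes them differently. The paper first shows that $T_i E^*_\nu$ lies in $\Span\{E^*_\nu, E^*_{s_i\nu}\}$: by \cref{lem:stability} it is a combination of $E^*_\mu$ with $\mu\in\SG_n(\nu)$, and \cref{lem:Heckeop-evaluation} together with \cref{rmk:Enu(nu)} kills every $E^*_\mu$ with $\mu\notin\{\nu,s_i\nu\}$. Both coefficients are then read off simultaneously by taking top parts and invoking \cref{prop:shape-homogeneous}; no evaluation at $\widetilde{\nu}$ is ever performed, and the constant $c$ is simply inherited from the homogeneous formula.

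Your route instead verifies the characterization in \cref{thm:Knop_Sahi} directly for $g=(T_i+c)E^*_\nu$, which forces an explicit check that $g(\widetilde{\nu})=0$ via the identities $\widetilde{s_i\nu}=s_i\widetilde{\nu}$ and $k_{i+1}(\nu)-k_i(\nu)=r_i(\nu)$. From the paper's standpoint this computation is redundant, but it has the side benefit of pinning down $c$ independently: your evaluation gives $(T_iE^*_\nu)(\widetilde{\nu}) = -\dfrac{1-t}{1-q^{\nu_i-\nu_{i+1}}t^{r_i(\nu)}}\,E^*_\nu(\widetilde{\nu})$, hence $c=\dfrac{1-t}{1-q^{\nu_i-\nu_{i+1}}t^{r_i(\nu)}}$, in agreement with \cref{prop:shape-homogeneous}. (Note that the $q$-exponent printed in \eqref{shape-permute} is $\nu_{i+1}-\nu_i$, which disagrees with \eqref{shape-permute0} and with your computation; the paper's own proof makes clear the intended exponent is $\nu_i-\nu_{i+1}$, so your two claims that ``the constants match'' are consistent with each other and with the corrected statement.)
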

\begin{proof}
    We start by proving that $T_i E^*_{\nu}(\xx;q,t)$ is a linear combination of $E^*_{\nu}(\xx;q,t)$ and $E^*_{s_i\nu}(\xx;q,t).$
    Since $T_i$ is a homogeneous operator, $T_i E^*_{\nu}$ has degree $|\nu|$. Moreover, using \cref{lem:Heckeop-evaluation} and the vanishing conditions satisfied by $E^*_{\nu}$
    (\cref{thm:Knop_Sahi}), we get that for any $\mu\in\NN^n$ with $|\mu| \leq |\nu|$ and $\mu\notin \{\nu,s_i\nu\}$, we have $(T_i E^*_{\nu})(\widetilde{\mu})=0$. Using now the fact that 
    $T_i E_{\nu}^*$ is a linear combination of 
    $E_{\mu}^*$ for $|\mu|=|\nu|$ (\cref{lem:stability}), and 
    \cref{rmk:Enu(nu)}, we conclude that 
    the coefficient of $E^*_{\mu}$ in $T_i E^*_{\nu}$ is $0$ for all $\mu\notin \{\nu,s_i\nu\}$.

    To get the coefficients of this linear expansion, it is enough to look at the top homogeneous part. We then conclude using \cref{prop:shape-homogeneous}.
\end{proof}
The fact that $T_i E^*_{\nu}$ is a linear combination of $E^*_{\nu}$ and $E^*_{s_i\nu}$ will be useful later. Although the explicit coefficients of this expansion will not be needed, we provide them here for completeness.

\begin{definition}\label{def:precedence-order}
 Let $\mu, \nu\in \NN^n$.  We say that \emph{$\mu$ precedes $\nu$} and write
 $\mu \preceq \nu$ if there exists $\pi\in S_n$ such that 
 \begin{itemize}
     \item $\mu_i \leq \nu_{\pi(i)}$ for all $i$,
     \item if $i>\pi(i)$, then $\mu_i < \nu_{\pi(i)}.$
 \end{itemize}
\end{definition}

\begin{example}
Consider the compositions $\mu=(3,3,2,0)$, $\nu=(5,4,1,2)$ and $\tau=(5,4,0,3)$. Then $\mu\preceq\nu$ but $\mu\npreceq\tau$.
\end{example}
The following is known as the extra \emph{vanishing property}.
\begin{thm}[{\cite[Theorem 4.5]{Knop1997b}}]\label{lem:extravanishing}
    If $\mu \npreceq \nu$ then $E_{\mu}^*(\widetilde{\nu})=0.$
\end{thm}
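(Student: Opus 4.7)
The plan is to reduce the statement to the case where $\mu$ is a partition via the Hecke recursion of \cref{prop:shape}, handle the partition case using the vanishing definition together with a Cherednik-type eigenvalue argument, and glue these together through a combinatorial lemma relating $\preceq$ to adjacent transpositions. Throughout, I would induct on the length $\ell(\sigma_\mu)$ (so the base case is $\mu = \lambda$, a partition), with a nested induction on $|\nu|$ in the base case.

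\textbf{Inductive step (reduction to partitions).} Given $\mu$ with $\ell(\sigma_\mu) > 0$, pick $i$ with $\mu_i < \mu_{i+1}$, set $\nu' := s_i\mu$, and apply \cref{prop:shape} to $\nu'$: this expresses $E^*_\mu = E^*_{s_i \nu'}$ as an explicit linear combination of $T_i E^*_{\nu'}$ and $E^*_{\nu'}$. Evaluating at $\widetilde{\nu}$ and applying \cref{lem:Heckeop-evaluation}, we conclude that
\[
E^*_\mu(\widetilde{\nu}) \;=\; \alpha\, E^*_{\nu'}(\widetilde{\nu}) \;+\; \beta\, E^*_{\nu'}(\widetilde{s_i\nu})
\]
for some scalars $\alpha,\beta\in \QQ(q,t)$. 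By the inductive hypothesis applied to $\nu'$ (which has strictly smaller length), both terms vanish provided $\nu' \not\preceq \nu$ and $\nu' \not\preceq s_i\nu$. So the whole induction is driven by the following purely combinatorial claim, which I would prove by contrapositive: \emph{if $\mu = s_i\nu'$ with $\nu'_i > \nu'_{i+1}$ and $\mu \not\preceq \nu$, then $\nu' \not\preceq \nu$ and $\nu' \not\preceq s_i \nu$}. Given a witness $\pi$ to either precedence, I would explicitly swap $\pi(i)$ with $\pi(i+1)$ (or with $\pi^{-1}(i)$, $\pi^{-1}(i+1)$) and use $\nu'_i > \nu'_{i+1}$ together with the strict inequality clause of \cref{def:precedence-order} to produce a witness to $\mu \preceq \nu$, a contradiction.

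\textbf{Base case (partitions).} When $\mu = \lambda$ is a partition, note $\lambda \preceq \lambda$ holds trivially via $\pi = \mathrm{id}$, so $\lambda \not\preceq \nu$ forces $\nu \neq \lambda$. When $|\nu| \leq |\lambda|$, this is precisely the defining vanishing of \cref{thm:Knop_Sahi}. The real content is $|\nu| > |\lambda|$. Here I would use the Knop--Sahi Cherednik-type operators $\xi_1,\dots,\xi_n$ (alluded to after \cref{lem:symmetry_E_lambda}) which are triangular with respect to $\preceq$ and for which $E^*_\lambda$ is a joint eigenvector with eigenvalue $\widetilde{\lambda}$. Evaluating the eigenvalue identity $\xi_i E^*_\lambda = \widetilde{\lambda}_i E^*_\lambda$ at $\widetilde{\nu}$, the action of $\xi_i$ produces a linear combination of values $E^*_\lambda(\widetilde{\rho})$ for $\rho$ with $|\rho| \leq |\nu|$ and $\rho \preceq \nu$ (strictly smaller in an induced order), and the resulting eigenvalue equation forces $E^*_\lambda(\widetilde{\nu}) = 0$ by induction on $|\nu|$ (and a further induction on the position of $\nu$ in the $\preceq$-order), because the eigenvalue $\widetilde{\lambda}_i$ fails to match the coefficient coming from $\widetilde{\nu}_i$ precisely when $\lambda \not\preceq \nu$.

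\textbf{Main obstacle.} The combinatorial lemma in the inductive step is routine once one unpacks \cref{def:precedence-order}, and the inductive step itself is a direct two-line consequence of \cref{prop:shape} and \cref{lem:Heckeop-evaluation}. The genuine difficulty is the partition base case at $|\nu| > |\lambda|$: one must set up the correct triangular Cherednik operators and verify that their eigenvalues distinguish $\widetilde{\nu}$ from $\widetilde{\lambda}$ exactly in the cases $\lambda \not\preceq \nu$. This is the content of Knop's argument in \cite[Theorem 4.5]{Knop1997b}, and indeed the reason the result is non-trivial and genuinely sharper than the defining vanishing.
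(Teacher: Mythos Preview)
The paper does not prove this theorem at all: it is stated with a citation to \cite[Theorem~4.5]{Knop1997b} and used as a black box (for instance in the proofs of \cref{lem:2} and of Step~2 of \cref{thm:recurrence}). There is therefore no ``paper's own proof'' to compare against.

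That said, your outline is broadly in the spirit of Knop's original argument, which also hinges on the Cherednik-type operators and the combinatorics of the order~$\preceq$. A couple of remarks on your sketch. First, the combinatorial claim in your inductive step---that $\nu'\not\preceq\nu$ and $\nu'\not\preceq s_i\nu$ whenever $\nu'_i>\nu'_{i+1}$ and $s_i\nu'\not\preceq\nu$---is essentially Knop's Lemma~4.1/4.3, but it is not entirely routine: the na\"ive ``swap $\pi(i)$ with $\pi(i+1)$'' move does not automatically preserve the strict-inequality clause in \cref{def:precedence-order} (the condition $j>\pi(j)$ is not conjugation-invariant), so one has to argue more carefully by cases. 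Second, in Knop's proof the Cherednik operators carry the full weight for \emph{all} compositions, not just partitions; your reduction-to-partitions step is a legitimate alternative organization, but it does not really save work, since the hard part (extra vanishing for $|\nu|>|\mu|$) still requires the eigenvalue machinery you invoke in the base case. You correctly identify this as the main obstacle.
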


\begin{lem}\label{lem:2}
    Let $\mu\in \Pack(k,n)$ and let $\nu=(\nu_1,\dots,\nu_n)$ be a composition
    such that there exists $i_0\leq k$ such that $\nu_{i_0}=0$.
    Then $\mu \npreceq \nu$.  It follows that $E_{\mu}^{*}(\widetilde{\nu})=0$ and 
    $f_{\mu}^*(\widetilde{\nu})=0$.
\end{lem}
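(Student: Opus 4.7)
The plan is to establish the three assertions in order. To prove $\mu \npreceq \nu$, I argue by contradiction: suppose $\pi \in S_n$ witnesses $\mu \preceq \nu$. Because $\mu$ is packed, $\mu_i \geq 1$ for $i \leq k$, which forces $\nu_{\pi(i)} \geq 1$ for such $i$; thus $\pi$ restricts to an injection of $\{1,\dots,k\}$ into $\{j : \nu_j \geq 1\}$. Since $\nu_{i_0}=0$, the index $i_0$ is not in that image, so $\pi(j) = i_0$ for some $j > k$. Then $j > k \geq i_0 = \pi(j)$, and \cref{def:precedence-order} requires $\mu_j < \nu_{i_0} = 0$; but $j > k$ gives $\mu_j = 0$, a contradiction. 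The vanishing $E_\mu^*(\widetilde{\nu})=0$ is then immediate from \cref{lem:extravanishing}.

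The more subtle claim is $f_\mu^*(\widetilde{\nu})=0$. A naive attempt would expand $f_\mu^*$ in the basis $\{E_\tau^* : \tau \in S_n(\lambda)\}$ and use extra vanishing on each term, but this fails: a general $\tau \in S_n(\lambda)$ is not packed, and $E_\tau^*(\widetilde{\nu})$ can be nonzero (for instance, when $\tau = \nu \in S_n(\lambda)$). Instead I use $f_\mu^* = T_{\sigma_\mu}\cdot E_\lambda^*$ directly, together with \cref{lem:Heckeop-evaluation}. The key observation is that since $\mu$ is packed of type $(k,n)$ and $\lambda$ is its sorting, the shortest $\sigma_\mu$ with $\sigma_\mu(\lambda)=\mu$ must restrict to a bijection on each of the blocks $\{1,\dots,k\}$ and $\{k+1,\dots,n\}$; hence $\sigma_\mu \in S_k \times S_{n-k}$, and any reduced expression uses only simple transpositions $s_i$ with $i \neq k$.

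Iteratively applying \cref{lem:Heckeop-evaluation} along such a reduced expression $\sigma_\mu = s_{i_1}\cdots s_{i_\ell}$ expresses $f_\mu^*(\widetilde{\nu})$ as a linear combination of evaluations $E_\lambda^*(\widetilde{\nu'})$, where each $\nu'$ is obtained from $\nu$ by acting with a product of the $s_{i_p}$. Every such product lies in $S_k \times S_{n-k}$ and therefore preserves the block $\{1,\dots,k\}$ setwise, so each $\nu'$ still has a zero somewhere in its first $k$ positions. Because $\lambda$ is itself packed of type $(k,n)$, the first part of the lemma applies with $(\lambda,\nu')$ in place of $(\mu,\nu)$ and yields $E_\lambda^*(\widetilde{\nu'})=0$, whence $f_\mu^*(\widetilde{\nu})=0$.

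The main obstacle is this transition from $E_\mu^*$ to $f_\mu^*$: direct extra-vanishing cannot see enough cancellation because the $E_\tau^*$ appearing in the expansion of $f_\mu^*$ are indexed by non-packed $\tau$. Overcoming it requires exploiting the specific block structure of $\sigma_\mu$ for packed $\mu$ to route the evaluation back through $E_\lambda^*$ at points that still satisfy the packed-zero hypothesis.
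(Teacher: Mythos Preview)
Your argument is correct. The treatment of $\mu \npreceq \nu$ and of $E_\mu^*(\widetilde{\nu})=0$ is essentially identical to the paper's. For $f_\mu^*(\widetilde{\nu})=0$ you take a genuinely different route: the paper argues on the \emph{polynomial} side, showing via the shape-permuting relation (\cref{prop:shape}) that $f_\mu^* = T_{\sigma_\mu}E_\lambda^*$ lies in $\Span\{E_\tau^*:\tau\in\Pack(k,n)\}$, and then invokes extra vanishing for every such $\tau$ against the fixed point $\nu$. You instead argue on the \emph{evaluation-point} side: fixing the polynomial $E_\lambda^*$ and iterating \cref{lem:Heckeop-evaluation} to propagate the evaluation through the word $T_{i_1}\cdots T_{i_\ell}$, landing on points $\nu'$ that differ from $\nu$ by elements of $S_k\times S_{n-k}$ and hence still carry a zero among their first $k$ coordinates. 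Your approach trades \cref{prop:shape} for the more elementary \cref{lem:Heckeop-evaluation}; the paper's approach yields the slightly stronger structural statement $f_\mu^*\in\Span\{E_\tau^*:\tau\in\Pack(k,n)\}$, which is not needed here but is of independent interest. (A small sharpening: since $\sigma_\mu$ sends the $j$th zero of $\lambda$ to the $j$th zero of $\mu$, it is in fact the identity on $\{k+1,\dots,n\}$, so $\sigma_\mu\in S_k$; your weaker claim $\sigma_\mu\in S_k\times S_{n-k}$ is already sufficient.)
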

\begin{proof}
We start by proving that $\mu \npreceq \nu$. 
Assume that this is not the case, then there exists $\pi\in S_n$ as in \cref{def:precedence-order}. Let $j_0=\pi^{-1}(i_0)$. Since $\mu_{j_0}\leq \nu_{i_0}=0$ and $\mu\in\Pack(k,n)$, we get that $j_0>k$. We then have $i_0=\pi(j_0)\leq k<j_0$ but $\mu_{j_0}=\nu_{\pi(j_0)}$, which contradicts the second item in \cref{def:precedence-order}. This proves that $\mu \npreceq \nu$. 

Now by \cref{lem:extravanishing}, it follows that $E_{\mu}^*(\widetilde{\nu})=0$.
We now claim that $f_{\mu}^*\in \Span\{E_{\tau}^* \ \vert \ \tau\in \Pack(k,n)\}.$
Let $\lambda$ be the partition obtained by sorting the parts of $\mu$.
To prove the claim, recall that by \cref{def:ASEP}, since $\lambda$ is a partition,
$f_{\lambda}^* = E_{\lambda}^*.$  Now by definition, we can obtain
$f_{\mu}^*$ from $f_{\lambda}^*=E_{\lambda}^*$ by applying the $T_i$ operators for $i\leq k-1$, which by 
\cref{prop:shape} will give a linear combination of polynomials
$E_{\tau}^*$ for $\tau\in \Pack(k,n)$. 
Now by the claim, and the fact that $E_{\tau}^*(\widetilde{\nu})=0$ for 
$\tau\in \Pack(k,n)$, it follows that  $f_{\mu}^*(\widetilde{\nu})=0$.
\end{proof}

\begin{proof}[Proof of \cref{thm:recurrence} \ref{first}]
We will prove by induction on $1\leq \ell\leq k$ that 
$$f_{\mu}^{*}(x_1,\dots,x_n) = \prod_{i=1}^\ell (x_i -t^{-n+1})\ Q_{\ell}(x_1,\dots,x_n),$$
for some polynomial $Q_\ell$ of degree $|\mu|-\ell$. We then get \eqref{eq:factor} by taking $\ell=k$.

For the base case, when $\ell=1$, we start by 
writing 
\begin{equation*}
    f_{\mu}^*(x_1,\dots,x_n) = (x_1-t^{-n+1}) Q_1(x_1,\dots,x_n)+R(x_2,\dots,x_n)
\end{equation*}
for some polynomial $R(x_2,\dots,x_n)$.
Consider $\nu=(\nu_2,\dots,\nu_n)\in \NN^{n-1}.$
We will show that $R(\widetilde{\nu})=0$ for all such $\nu$, which by \cref{thm:Knop-Sahi-interpolation} will imply that 
$R=0$.
Now let $\rho=(0,\nu_2,\dots,\nu_n)$.  Then $\widetilde{\rho} = (t^{-n+1}, \widetilde{\nu}).$  By \cref{lem:2}, we have that 
$f^*_{\mu}(\widetilde{\rho})=0$.  
But also $f_{\mu}^*(\widetilde{\rho}) = R(\widetilde{\nu})$ so 
$R(\widetilde{\nu})=0$ for all $\nu\in \NN^{n-1}$, hence $R=0$.

For the induction step, suppose that \eqref{eq:factor} holds for $\ell-1$.
Thus we can write 
\begin{align}
f_{\mu}^*(x_1,\dots,x_n)&=\prod_{i=1}^{\ell-1} (x_i-t^{-n+1}) Q_{\ell-1}(x_1,\dots,x_n)\\
&=\prod_{i=1}^{\ell-1}(x_i-t^{-n+1}) \left[ (x_\ell-t^{-n+1}) Q_\ell(x_1,\dots,x_n) + 
R_\ell(x_1,\dots,\hat{x}_\ell,\dots,x_n)\right]
\label{QkRk}
\end{align}
for some polynomial $R_\ell$ in $x_1,\dots,\hat{x}_\ell,\dots,x_n$.

Let $S(x_1,\dots,x_n):=\prod_{i=1}^{\ell-1} (x_i-t^{-n+1}) R_\ell(x_1,\dots,\hat{x}_\ell,\dots,x_n).$  Clearly $R_\ell$ is identically zero if and only if $S$ is identically zero.
Let $\nu=(\nu_1,\dots,\nu_{\ell-1},\nu_{\ell+1},\dots,\nu_n)\in \NN^{n-1}$, and define
$\rho=(\nu_1,\dots,\nu_{\ell-1},0,\nu_{\ell+1},\dots,\nu_n)\in \NN^n$.

Case 1: If there exists $i<\ell$ such that $\nu_i=0$, then take the smallest such $i$.
We have that $\widetilde{\rho}_i=t^{-n+1}$ which implies that 
$S(\widetilde{\rho})=0$.

Case 2: Otherwise $\nu_i \neq 0$ for any $i<\ell$.
Then $\widetilde{\rho} = (\widetilde{\nu}_1,\widetilde{\nu}_2,\dots,t^{-n+1},
\widetilde{\nu}_{\ell+1},\dots,).$
Now we have that $S(\widetilde{\nu})$ is a multiple of 
$R_\ell(\widetilde{\nu})$, and from \eqref{QkRk} we have that 
$R_\ell(\widetilde{\nu})$ is a non zero multiple of $f_{\mu}^*(\widetilde{\rho})$. We use here the fact $\widetilde{\nu}_i\neq t^{-n+1}$ for $1\leq i\leq \ell-1$.
Finally from \cref{lem:2} we have 
$f_{\mu}^*(\widetilde{\rho})=0$.
This shows that for any $\nu\in \NN^{n-1}$ we have that 
$R_\ell(\widetilde{\nu})=0$.  This shows that $R_\ell$ must be identically zero, so we are done.
\end{proof}

The following lemma will be helpful in \ref{second} of the proof of \cref{thm:recurrence}.
\begin{lem}\label{lem:vanishing}
    Let $g(x_1,\dots,x_n)$ be a polynomial in $x_1,\dots,x_n$.
    Then $g(\widetilde{\nu})=0$ for all 
    $|\nu|\leq k$ if and only if 
the coefficient    $[E_{\nu}^*] g$ of $E_{\nu}^*$ in $g(x_1,\dots,x_n)$ is $0$ for all $|\nu| \leq k$.
\end{lem}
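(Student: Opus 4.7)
The plan is to expand $g$ in the basis $\{E_\nu^* : \nu \in \NN^n\}$ of $\mcP_n$ (which we get from the discussion after Lemma 2.8) as $g = \sum_{\nu} c_\nu E_\nu^*$, and then extract the coefficients $c_\nu$ one at a time by evaluating at the appropriate points $\widetilde{\mu}$, inducting on $|\mu|$. The two key inputs are the vanishing conditions defining $E_\nu^*$ (\cref{thm:Knop_Sahi}), which give $E_\nu^*(\widetilde{\mu}) = 0$ whenever $|\mu| \leq |\nu|$ and $\mu \neq \nu$, together with the non-vanishing $E_\mu^*(\widetilde{\mu}) \neq 0$ (\cref{rmk:Enu(nu)}).

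One direction is easy: if all $c_\nu$ vanish for $|\nu| \leq k$, then for any $\mu$ with $|\mu| \leq k$ we have
\[
g(\widetilde{\mu}) = \sum_{|\nu| > k} c_\nu E_\nu^*(\widetilde{\mu}) + \sum_{|\nu| \leq k} c_\nu E_\nu^*(\widetilde{\mu}),
\]
and in the first sum every term vanishes by \cref{thm:Knop_Sahi} (since $|\mu| \leq k < |\nu|$ forces $\mu \neq \nu$ and $|\mu| \leq |\nu|$), while the second sum is zero by hypothesis.

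For the converse, I would argue by induction on $|\mu|$. Fix $\mu$ with $|\mu| \leq k$ and assume $c_\tau = 0$ for all $\tau$ with $|\tau| < |\mu|$. In the expansion $g(\widetilde{\mu}) = \sum_\nu c_\nu E_\nu^*(\widetilde{\mu})$, every $\nu$ with $|\nu| > |\mu|$ (and hence $\nu \neq \mu$) contributes zero by \cref{thm:Knop_Sahi}, as does every $\nu$ with $|\nu| = |\mu|$ and $\nu \neq \mu$; and every $\nu$ with $|\nu| < |\mu|$ has $c_\nu = 0$ by the inductive hypothesis. What remains is $g(\widetilde{\mu}) = c_\mu E_\mu^*(\widetilde{\mu})$, and since $E_\mu^*(\widetilde{\mu}) \neq 0$ (\cref{rmk:Enu(nu)}), the hypothesis $g(\widetilde{\mu}) = 0$ forces $c_\mu = 0$.

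There is no real obstacle here; the only thing to be careful about is that the sum $g = \sum_\nu c_\nu E_\nu^*$ is a priori infinite, but only finitely many $c_\nu$ are nonzero because $g$ has finite degree and the $E_\nu^*$ with $|\nu|$ exceeding that degree cannot appear (their top degree part is $E_\nu$, which is linearly independent from lower-degree pieces). This ensures all the sums above are well-defined and justifies the inductive extraction.
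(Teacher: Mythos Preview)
Your proof is correct and follows essentially the same approach as the paper: expand $g$ in the $E_\nu^*$ basis, use the vanishing conditions of \cref{thm:Knop_Sahi} together with \cref{rmk:Enu(nu)}, and extract coefficients by induction on size. The paper phrases the forward direction as an induction on the parameter $k$ rather than on $|\mu|$, but this is only a cosmetic difference.
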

\begin{remark}\label{rem:vanishing}
Since the families $\left(E_{\nu}^*\right)_{|\nu|\leq d}$ and $\left(f_{\nu}^*\right)_{|\nu|\leq d}$ are both bases of the space of polynomials of degree at most $d$ (see \cref{lem:span} and \cref{prop:V*-f*}),
the two conditions in \cref{lem:vanishing} are equivalent to the condition that 
the coefficient    $[f_{\nu}^*] g$ is $0$ for all $|\nu| \leq k$.
\end{remark}
\begin{proof}
For the forward direction,  we will use induction on $k$;
suppose that the forward direction of the lemma is true for $k$.
Now suppose that $g(\widetilde{\nu})=0$ for all 
    $|\nu|\leq k+1$.  By the induction hypothesis,
         $[E_{\mu}^*] g=0$ for all $|\mu| \leq k$.
         Thus we can write 
         \begin{equation}
         \label{expandg}    
         g=\sum_{i=k+1}^m \sum_{\mu \vdash i} a_\mu E_{\mu}^*,
         \end{equation}
where $m=\deg(g)$.
Now for $\nu \vdash k+1$, we have that 
\begin{equation*}
    0=g(\widetilde{\nu})= \sum_{i=k+1}^m \sum_{\mu \vdash i} a_\mu E_{\mu}^*(\widetilde{\nu})
     = a_{\nu} E_{\nu}^*(\widetilde{\nu}),
\end{equation*}
where in the last equality we used 
\cref{thm:Knop_Sahi}.  But now since $E_{\nu}^*(\widetilde{\nu}) \neq 0$ (see \cref{rmk:Enu(nu)}), it follows that $a_{\nu}=0$.

For the backward direction, suppose that 
   $[E_{\nu}^*] g=0$
 for all $|\nu| \leq k$.  Then as before we can write
 $g$ as in \eqref{expandg}.  By \cref{thm:Knop_Sahi},
 for all $\nu$ such that $|\nu|\leq k$ and $|\mu|>k$, we have
 $E_{\mu}^*(\widetilde{\nu})=0$.  But now by \eqref{expandg}, we have that 
 $g(\widetilde{\nu})=0$.
\end{proof}

\begin{proof}[Proof of \cref{thm:recurrence} \ref{second}]
Let $\widehat{Q}(x_1,\dots,x_n):=Q(qx_1,qx_2,\dots,qx_n)$.
Since $\widehat{Q}$ is a polynomial of degree $|\mu|-k$ which lies in the space spanned by
$f_{\nu}^*$ for $|\nu|\leq |\mu|-k$, it follows that we can write
$$\widehat{Q}(x_1,\dots,x_n) = \sum_{\nu, |\nu| \leq |\mu|-k}
    b_{\mu}^{\nu}  \ f_{\nu}^{*}({x_1}, {x_2},\dots,{x_n}),$$
    where $b_{\mu}^{\nu} \in \QQ(q,t).$ 
    We want to show that $[f_{\nu}^*]\widehat{Q}=0$ for $|\nu|<|\mu|-k$.
    By \cref{rem:vanishing}, it suffices to show that 
    $\widehat{Q}(\widetilde{\rho})=0$ for all $\rho$ with
    $|\rho|<|\mu|-k$.

    Choose $\rho$ such that 
    $|\rho|<|\mu|-k$.  Let $\rho^+ = (\rho_1+1,\rho_2+1,\dots,\rho_n+1).$
    It is clear from the definitions that $\widehat{Q}(\widetilde{\rho}) = Q(\widetilde{\rho^+}).$
    Note also that $\widetilde{\rho^+}$ has no entries of the form
    $t^i$.
    From \ref{first}, we have 
    $$
f_{\mu}^{*}(x_1,\dots,x_n) = \prod_{i=1}^k (x_i -t^{-n+1})\ Q(x_1,\dots,x_n),$$ which implies that 
$f_{\mu}^*(\widetilde{\rho^+})$ is a nonzero multiple of 
$Q(\widetilde{\rho^+})$.

We now claim that for any $\nu$ such that $|\nu| < |\mu|-k$, we have
$\mu \npreceq \nu^+$.  To prove the claim, assume that 
$\mu \preceq \nu^+$.  Then there exists some permutation $\pi$ such that $\mu_i \leq \nu_{\pi(i)}^+$ for $1\leq i \leq k$,
and $0=\mu_i \leq \nu_{\pi(i)}^+$ for $k+1 \leq i \leq n$.
The sum of the $\nu_{\pi(i)}^+$ for $k+1 \leq i \leq n$ is at least $n-k$, which implies that the sum of the $\nu_{\pi(i)}^+$ for $1\leq i \leq k$ is at most $|\nu|+k$.  But this implies that 
$|\mu| \leq |\nu|+k$, which is a contradiction.

Now for our chosen $\rho$, since $|\rho|<|\mu|-k$, we have that 
$\mu \npreceq \rho^+$.  But now by \cref{lem:extravanishing}, it follows that
$f_{\mu}^*(\widetilde{\rho^+})=0$ for all $\rho$ with $|\rho|<|\mu|-k$, and since 
$f_{\mu}^*(\widetilde{\rho^+})$ is a nonzero multiple of 
$Q(\widetilde{\rho^+})$, it follows that 
$Q(\widetilde{\rho^+})=\widehat{Q}(\widetilde{\rho}) = 0$
for all $\rho$ with $|\rho|<|\mu|-k$.  We have thus proved that 
$$\widehat{Q}(x_1,\dots,x_n) = \sum_{\nu, |\nu| = |\mu|-k}
    b_{\mu}^{\nu}  \ f_{\nu}^{*}({x_1}, {x_2},\dots,{x_n}),$$
     and hence 
    $${Q}(x_1,\dots,x_n) = \sum_{\nu, |\nu| = |\mu|-k}
    b_{\mu}^{\nu}  \ f_{\nu}^{*}\left(\frac{x_1}{q}, \frac{x_2}{q},\dots,\frac{x_n}{q}\right).$$
    But now by renaming the notation $b_{\mu}^{\nu}$ by $b_{\mu}^{\nu} \ q^{|\mu|-k}$, which is a convenient notation for 
    \ref{third}, we get the desired result.    
\end{proof}

\begin{proof}[Proof of \cref{thm:recurrence} \ref{third}]
Note that, since $\deg(f^*_\mu)=|\mu|$, the transformation 
$$f^*_\mu\longmapsto q^{|\mu|}f^*_\mu\left(\frac{x_1}{q}, \frac{x_2}{q},\dots,\frac{x_n}{q}\right)$$
does not change the top homogeneous part of the polynomial.
\ref{third} follows then from \ref{first} and \ref{second} of \cref{thm:recurrence}, by 
looking at the top homogeneous part of \eqref{eq:factor}, and using the fact that in 
\ref{second}, the sum is over compositions $\nu$ such that $|\nu| = |\mu|-k$. We also use here \cref{lem:basis}, which says  that $(f_\nu)_{\nu\vdash |\mu|-k}$ is a basis of the space of polynomials of degree $|\mu|-k$.
\end{proof}

\section{An algebraic recursion for \texorpdfstring{$f_{\mu}^*$}{f*} indexed by arbitrary compositions}\label{sec:tworow}

The main goal of the next two sections is to finish the proof of \cref{thm:main}. 
There are two main steps, the first of which is algebraic while the second is combinatorial:
\begin{itemize}
    \item  We start from the recursion given for the interpolation ASEP polynomials $f^*_\mu$  in \cref{thm:recurrence}, when $\mu$ is a  packed composition. 
    By applying Hecke operators to this recursion, we generalize it to any composition $\mu$. This recursion involves a family of coefficients $(b^\alpha_\mu)$ defined in \cref{def:b_recursion},  and encoded by the action of the Hecke operators on a variant $f_\alpha$ of the ASEP polynomials, indexed by signed compositions. This step corresponds to \cref{thm:f_decomposition}.
    \item We show that the generating function of one single signed layer satisfies the same recursion as the coefficients $(b^\alpha_\mu)$, see \cref{prop:recursion}. Thus, we show that the \emph{algebraic recursion} for the polynomials $f^*_\mu$ corresponds to a \emph{combinatorial recursion} for signed MLQs. The combinatorial recursion encodes the fact that if we remove the bottom signed and the bottom classic layers of a signed MLQ with $2L$ rows, we obtain a signed MLQ with $2(L-1)$ rows, see \cref{lem:F_decomposition}. The main theorem is then obtained by induction on the number of rows.
\end{itemize}

\subsection{Some preliminaries about Hecke operators}
\begin{lem}
    Fix a polynomial in $n$ variables $A\in\mcP_n$ and let $1\leq i\leq n-1$. Then
    \begin{align}
      &T_i(x_ix_{i+1}A)=x_ix_{i+1}T_i(A)\label{eq:Hecke1},\\
      &T_i(x_i A)=x_{i+1}T_i(A)+(1-t)x_{i+1}A,\label{eq:Hecke2}\\
      &T_i(x_{i+1}A)=x_{i}T_i(A)-(1-t)x_{i+1} A.\label{eq:Hecke3}
    \end{align}
\end{lem}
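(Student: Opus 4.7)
The three identities are direct calculations from the explicit formula
$$T_i = t - \frac{tx_i - x_{i+1}}{x_i - x_{i+1}}(1 - s_i),$$
so my plan is to prove them by pure expansion, using only that $s_i$ is the transposition swapping $x_i$ and $x_{i+1}$ while fixing all other variables (and in particular commuting past symmetric expressions in $x_i, x_{i+1}$).

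For \eqref{eq:Hecke1}, I would begin by noting that $x_ix_{i+1}$ is symmetric in $x_i, x_{i+1}$, so $s_i(x_ix_{i+1}A) = x_ix_{i+1}\, s_i(A)$. Substituting into the definition of $T_i$ gives
$$T_i(x_ix_{i+1}A) = t\,x_ix_{i+1}A - \frac{tx_i-x_{i+1}}{x_i-x_{i+1}}\,x_ix_{i+1}(A - s_iA) = x_ix_{i+1}\,T_i(A),$$
since the factor $x_ix_{i+1}$ pulls cleanly through the divided-difference operator.

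For \eqref{eq:Hecke2}, the trick is the standard identity
$$x_i A - s_i(x_iA) = x_i A - x_{i+1}s_i(A) = (x_i - x_{i+1})A + x_{i+1}(A - s_iA),$$
so that $\dfrac{tx_i - x_{i+1}}{x_i - x_{i+1}}\bigl(x_iA - s_i(x_iA)\bigr) = (tx_i - x_{i+1})A + x_{i+1}\dfrac{tx_i - x_{i+1}}{x_i - x_{i+1}}(A - s_iA)$. Substituting into the definition of $T_i$, the $tx_iA$ pieces cancel and I get
$$T_i(x_iA) = x_{i+1}A - x_{i+1}\frac{tx_i - x_{i+1}}{x_i - x_{i+1}}(A - s_iA) = x_{i+1}T_i(A) + (1-t)x_{i+1}A,$$
where the last step recognizes the expression in brackets as $T_i(A)$ up to the correction $(1-t)x_{i+1}A$. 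Identity \eqref{eq:Hecke3} is handled symmetrically, using
$$x_{i+1}A - s_i(x_{i+1}A) = -(x_i - x_{i+1})A + x_i(A - s_iA);$$
the same bookkeeping then yields $T_i(x_{i+1}A) = x_i T_i(A) - (1-t)x_{i+1}A$.

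There is no real obstacle here: everything is a mechanical unfolding of the definition, and the only thing to be careful about is sign tracking in the divided difference and the decomposition of $x_iA - x_{i+1}s_iA$ (respectively $x_{i+1}A - x_is_iA$) as a multiple of $(x_i - x_{i+1})$ plus a $(1-s_i)$-piece. Alternatively, \eqref{eq:Hecke2} and \eqref{eq:Hecke3} can be seen as rewritings of the well-known commutation relation $T_i x_i - x_{i+1} T_i = (1-t)x_{i+1}$ in the affine Hecke algebra (and its $s_i$-conjugate), which could be cited in lieu of the direct computation.
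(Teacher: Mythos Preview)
Your proposal is correct and follows essentially the same approach as the paper: both proofs expand directly from the defining formula for $T_i$, using the decomposition $x_iA - x_{i+1}s_i(A) = (x_i-x_{i+1})A + x_{i+1}(A-s_iA)$ (and its analogue for $x_{i+1}A$) to separate the divided-difference piece. The paper's version is slightly more terse, jumping straight from the expanded expression to the result without isolating that decomposition, but the underlying computation is identical.
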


\begin{proof}
We have
\begin{align*}
  T_i(x_ix_{i+1}A)
  &=tx_ix_{i+1}A-\frac{tx_i-x_{i+1}}{x_i-x_{i+1}}(x_ix_{i+1}A-s_i(x_ix_{i+1}A))\\
  &=x_ix_{i+1}T_i(A).
\end{align*}
This gives \cref{eq:Hecke1}. Notice that, more generally, $T_i(BA)=BT_i(A)$ for any polynomial $B$ which is symmetric in $x_i$ and $x_{i+1}$. We now prove \cref{eq:Hecke2}

\begin{align*}
  T_i(x_iA)
  &=tx_iA-\frac{tx_i-x_{i+1}}{x_i-x_{i+1}}(x_iA-s_i(x_iA))\\
  &=tx_iA-\frac{tx_i-x_{i+1}}{x_i-x_{i+1}}(x_iA-x_{i+1}s_i(A))\\
  &=x_{i+1}T_i(A)+(1-t)x_{i+1} A.
\end{align*}
We obtain similarly \cref{eq:Hecke3}
\begin{align*}
  T_i(x_{i+1}A)
  &=tx_{i+1}A-\frac{tx_i-x_{i+1}}{x_i-x_{i+1}}(x_{i+1}A-s_i(x_{i+1}A))\\
  &=tx_{i+1}A-\frac{tx_i-x_{i+1}}{x_i-x_{i+1}}(x_{i+1}A-x_{i}s_i(A))\\
  &=x_{i}T_i(A)-(1-t)x_{i+1}A.\qedhere
\end{align*}
\end{proof}
Combining \cref{eq:Hecke1,eq:Hecke2,eq:Hecke3}, we obtain that if a polynomial $A$ is divisible by $x_i$, then
\begin{equation}\label{eq:Hecke4}
  T_i(A/x_i)=\frac{1}{x_{i+1}}T_i(A)-(1-t)\frac{A}{x_i},  
\end{equation}
and if it is divisible by $x_{i+1}$ then
\begin{equation}\label{eq:Hecke5}
  T_i(A/x_{i+1})=\frac{1}{x_{i}}T_i(A)+(1-t)\frac{A}{x_i}.
\end{equation}
Finally, if $A$ is divisible by $x_{i}x_{i+1}$ then
\begin{equation}\label{eq:Hecke6}
    T_i(A/x_ix_{i+1})=\frac{1}{x_ix_{i+1}}T_i(A).
\end{equation}

\subsection{Action of Hecke operators on extended ASEP polynomials}
For any composition $\mu$, we define the polynomial
$$\hatfstar{\mu}:=q^{|\mu|}f^*_{\mu}\left(\frac{x_1}{q},\dots,\frac{x_n}{q}\right).$$

The Hecke operators act in the same way on three versions of ASEP polynomials.

\begin{lem}\label{lem:Hecke_ASEP}
  For a composition $\mu$, and for $g\in\{f,f^*,\hatfstar{}\}$, we have
    \begin{align}
 T_i(g_\mu)=\left\{
 \begin{array}{ll}
 g_{s_i\mu}   & \text{ if $\mu_i>\mu_{i+1}$,} \\
 t g_{\mu}    & \text{ if $\mu_i=\mu_{i+1}$,}\\
 tg_{s_i\mu}-(1-t)g_\mu& \text{ if $\mu_i<\mu_{i+1}$.}
 \end{array}     
 \right.\label{eq:HeckeASEP}
\end{align}
\end{lem}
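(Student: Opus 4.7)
The plan is to observe that two of the three cases of this lemma are already available in the paper, and that the remaining case reduces to one of them by a simple scale-invariance argument. Specifically, the case $g = f^*$ is exactly the content of \cref{prop:T_i-f_star}, and the case $g = f$ is recorded (with the same proof) in \cref{rem:ASEPpoly}. So only the case $g = \widehat{f^*}$ requires a new argument.

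For the case $g = \widehat{f^*}$, the key observation is that the Hecke operator $T_i$ commutes with the $q$-rescaling operator $R_q$ defined by $(R_q A)(x_1,\dots,x_n) := A(x_1/q,\dots,x_n/q)$. This follows because the rational function $\frac{tx_i - x_{i+1}}{x_i - x_{i+1}}$ appearing in \eqref{eq:Hecke} is homogeneous of degree zero and is therefore fixed by $R_q$, while $s_i$ commutes with $R_q$ on the nose. Hence $T_i(R_q A) = R_q(T_i A)$ for every polynomial $A$.

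Applying this identity to $A = f^*_\mu$ and multiplying through by $q^{|\mu|}$ yields $T_i(\widehat{f^*_\mu}) = q^{|\mu|} R_q(T_i f^*_\mu)$. Since $|s_i\mu| = |\mu|$, the three cases of \cref{prop:T_i-f_star} translate verbatim to $\widehat{f^*}$: when $\mu_i > \mu_{i+1}$, the right-hand side is $q^{|s_i\mu|} R_q(f^*_{s_i\mu}) = \widehat{f^*_{s_i\mu}}$; when $\mu_i = \mu_{i+1}$, it is $t\,\widehat{f^*_\mu}$; and when $\mu_i < \mu_{i+1}$, it is $(t-1)\widehat{f^*_\mu} + t\,\widehat{f^*_{s_i\mu}}$, matching the claim since $-(1-t) = t-1$.

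I do not anticipate any significant obstacle: the lemma is a bookkeeping statement, and its only nontrivial content is the compatibility of $T_i$ with the rescaling $x_j \mapsto x_j/q$, which is immediate from the degree-zero homogeneity of the coefficients defining $T_i$.
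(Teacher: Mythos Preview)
Your proposal is correct and follows essentially the same approach as the paper: both reduce the $\widehat{f^*}$ case to the already-established $f^*$ case by noting that $T_i$ commutes with the rescaling $x_j \mapsto x_j/q$ (the paper phrases this as ``$T_i$ is homogeneous'' while you phrase it via degree-zero homogeneity of the coefficients, but these are the same observation), and then use $|s_i\mu|=|\mu|$ to match the $q$-prefactors.
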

\begin{proof}
The result is known for the functions $f_\mu$ (see \cref{rem:ASEPpoly}) and for the functions $f^*_\mu$ (see~\cref{prop:T_i-f_star}). Let us check it for $\hatfstar{\mu}$. First notice that the linear map
$$\phi_r:h(x_1,\dots,x_n)\longmapsto q^r h\left(\frac{x_1}{q},\dots,\frac{x_n}{q}\right)$$
acts diagonally on a homogeneous function $h$ of degree $m$: $\phi_r(h)=q^{r-m}h$. Since the operators $T_r$ are homogeneous, we have $T_i\circ\phi_r (h)=\phi_r\circ T_i (h).$
We now write 
$T_i\left(\hatfstar{\mu}\right)=T_i\circ \phi_{|\mu|}(f^*_\mu)=\phi_{|\mu|}\circ  T_i(f^*_\mu)$. We conclude using the fact that $T_i(f^*_\mu)$ is a linear combination of $f^*_\mu$ and $f^*_{s_i\mu}$, and that $|s_i\mu|=|\mu|$. 
\end{proof}

We recall from \cite{CorteelMandelshtamWilliams2022} that the homogeneous ASEP polynomial $f_\mu$ is divisible by $\prod_{i:\mu_i>0}x_i$ (see also \cref{lem:F_decomposition1}), which corresponds to the weights of the balls in Row~1. In \cref{def:extendedASEP} below we extend the definition of ASEP polynomials to all \emph{signed} compositions; here we assign a weight of $t^{-n+1}$ to ``negative'' balls.
 
For $\alpha\in\ZZ^n$, set 
\begin{equation}\label{eq:absolute}
\lVert\alpha\rVert:=(|\alpha_1|,\dots,|\alpha_n|).\end{equation}
\begin{definition}\label{def:extendedASEP}
   Fix $\alpha\in\ZZ^n$.
    We define the \emph{extended ASEP polynomial} 
    $$f_\alpha:=\frac{f_{\lVert \alpha\rVert}}{\prod_{i:\alpha_i<0}(-t^{n-1}x_i)}.$$
\end{definition}

\begin{prop}\label{prop:Hecke_extendedASEP}
Given $\alpha\in \ZZ^n$ and $1\leq i\leq n-1$, we have the following action of $T_i$ on $f_\alpha$.

\begin{enumerate}[font=\normalfont]
\item Case $\alpha_i,\alpha_{i+1}\geq 0$ or $\alpha_i,\alpha_{i+1}< 0$: we have 
\begin{align*}
 T_i(f_\alpha)=\left\{
 \begin{array}{ll}
 f_{s_i\alpha}   & \text{ if $\alpha_i>\alpha_{i+1}\geq 0$, or $-\alpha_i>-\alpha_{i+1}> 0$} \\
 t f_{\alpha}    & \text{ if $\alpha_i=\alpha_{i+1}\geq 0$, or $-\alpha_i=-\alpha_{i+1}> 0$}\\
 tf_{s_i\alpha}-(1-t)f_\alpha& \text{ if $\alpha_{i+1}>\alpha_{i}\geq 0$, or $-\alpha_{i+1}>-\alpha_{i}> 0$}
 \end{array}     
 \right.
\end{align*}

\item Case $\alpha_i\geq 0$ and $\alpha_{i+1}<0$: we have
\begin{align*}
 T_i(f_\alpha)=\left\{
 \begin{array}{ll}
 f_{s_i\alpha}+(1-t)f_{\alpha_1,\dots,-\alpha_i,-\alpha_{i+1},\dots}   & \text{ if $\alpha_i>-\alpha_{i+1}> 0$,} \\
 f_{s_i\alpha}    & \text{ if $\alpha_i=-\alpha_{i+1}> 0$,}\\
 tf_{s_i\alpha}& \text{ if $-\alpha_{i+1}>\alpha_{i}\geq 0$.}
 \end{array}     
 \right.
\end{align*}
\item Case $\alpha_i< 0$ and $\alpha_{i+1}\geq0$: we have
\begin{align*}
 T_i(f_\alpha)=\left\{
 \begin{array}{ll}
 f_{s_i\alpha}-(1-t)f_{\alpha}   & \text{ if $-\alpha_i>\alpha_{i+1}\geq 0$,} \\
 tf_{s_i\alpha}-(1-t)f_{\alpha}    & \text{ if $-\alpha_i=\alpha_{i+1}>0$,}\\
 tf_{s_i\alpha}-(1-t)\left(f_\alpha+f_{\alpha_1,\dots,-\alpha_i,-\alpha_{i+1},\dots}\right)& \text{ if $\alpha_{i+1}>-\alpha_{i}>0$.}
 \end{array}     
 \right.
\end{align*}

\end{enumerate}
\end{prop}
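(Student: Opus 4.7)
The plan is to reduce each $T_i(f_\alpha)$ to a computation of $T_i(f_{\lVert\alpha\rVert})$, which is already handled by \cref{lem:Hecke_ASEP}, via the Hecke identities \eqref{eq:Hecke4}--\eqref{eq:Hecke6} together with the principle that multiplication by any polynomial symmetric in $x_i, x_{i+1}$ commutes with $T_i$. Write $D_\alpha := \prod_{j:\,\alpha_j<0}(-t^{n-1}x_j)$, so $f_\alpha = f_{\lVert\alpha\rVert}/D_\alpha$; by \cref{lem:F_decomposition1}, $f_{\lVert\alpha\rVert}$ is divisible by $x_j$ whenever $\lVert\alpha\rVert_j>0$, so $f_\alpha$ is in fact a polynomial and $T_i$ acts on it. Factor $D_\alpha = E_\alpha \cdot D_\alpha^0$ where $E_\alpha$ collects the (zero, one, or two) factors coming from $j\in\{i,i+1\}$ with $\alpha_j<0$, and $D_\alpha^0$ involves neither $x_i$ nor $x_{i+1}$; then
\[
T_i(f_\alpha) \;=\; \frac{1}{D_\alpha^0}\, T_i\!\left(\frac{f_{\lVert\alpha\rVert}}{E_\alpha}\right).
\]

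For Case (1), where $\alpha_i,\alpha_{i+1}$ have the same sign, $E_\alpha$ is either $1$ or $t^{2(n-1)}x_ix_{i+1}$, both symmetric in $x_i, x_{i+1}$; in the second scenario \eqref{eq:Hecke6} shows that $1/E_\alpha$ also passes through $T_i$, so $T_i(f_\alpha) = T_i(f_{\lVert\alpha\rVert})/D_\alpha$. Since $s_i$ preserves the multiset of negative coordinates, $D_{s_i\alpha}=D_\alpha$, and dividing the three formulas of \cref{lem:Hecke_ASEP} (applied to $\lVert\alpha\rVert$) by $D_\alpha$ immediately produces the three claimed subcases.

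In Cases (2) and (3) the factor $E_\alpha$ equals $-t^{n-1}x_{i+1}$ and $-t^{n-1}x_i$, respectively, so I would apply \eqref{eq:Hecke5} or \eqref{eq:Hecke4}, each of which introduces a term $\pm(1-t)\,f_{\lVert\alpha\rVert}/x_\ast$. Substituting the three possible forms of $T_i(f_{\lVert\alpha\rVert})$ from \cref{lem:Hecke_ASEP} (based on the ordering of $\lVert\alpha\rVert_i$ vs.\ $\lVert\alpha\rVert_{i+1}$) produces three subcases in each case. The key bookkeeping is re-identifying the rational prefactors as extended ASEP polynomials: the factor $1/(-t^{n-1}x_i D_\alpha^0)$, when paired with $f_{s_i\lVert\alpha\rVert}$, equals $1/D_{s_i\alpha}$ and produces $f_{s_i\alpha}$ (since $s_i\alpha$ carries the unique relevant negative coordinate to position $i$), while the same prefactor paired with $f_{\lVert\alpha\rVert}$ equals $1/D_\beta$ for $\beta=(\alpha_1,\dots,-\alpha_i,-\alpha_{i+1},\dots)$ and produces $f_\beta$ (since $\lVert\beta\rVert=\lVert\alpha\rVert$ and $\beta$ has its single sign-changed negative coordinate at position $i$). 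Case (3) is symmetric: the extra term from \eqref{eq:Hecke4} carries the opposite sign, and directly yields the $-(1-t)f_\alpha$ contributions in subcases 3a, 3b, 3c, with $f_\beta$ appearing again in 3c.

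The main obstacle is purely organizational: in subcases 2a and 3c three distinct polynomials ($f_{s_i\alpha}$, $f_\alpha$, $f_\beta$) appear simultaneously, and one must track exactly which prefactor $1/D_{?}$ goes with which numerator $f_{?}$. No step is conceptually hard; once the reduction $T_i(f_\alpha) = T_i(f_{\lVert\alpha\rVert}/E_\alpha)/D_\alpha^0$ is in place, the rest of the proof is a nine-case verification driven entirely by \cref{lem:Hecke_ASEP} and the Hecke identities \eqref{eq:Hecke4}--\eqref{eq:Hecke6}.
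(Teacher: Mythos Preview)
Your proposal is correct and follows essentially the same approach as the paper: factor out the part of $D_\alpha$ not involving $x_i,x_{i+1}$, then use \eqref{eq:Hecke4}--\eqref{eq:Hecke6} to handle the remaining factor $E_\alpha$ and reduce to \cref{lem:Hecke_ASEP} applied to $f_{\lVert\alpha\rVert}$. The paper organizes the casework slightly differently (it first does $\alpha_i,\alpha_{i+1}\geq 0$, then derives the other subcases from that one) and leaves Case~(3) as an exercise, but your more systematic notation $D_\alpha=E_\alpha D_\alpha^0$ and your identification of the prefactors with $1/D_{s_i\alpha}$ and $1/D_\beta$ is exactly the bookkeeping the paper is doing.
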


\begin{proof}
We start by proving the case $(1)$. When $\alpha_i,\alpha_{i+1}\geq 0$, we have

    $$T_i(f_\alpha)=T_i\cdot\prod_{j:\alpha_j<0}\frac{1}{-t^{n-1}x_j}f_{\lVert\alpha\rVert}
        =\prod_{j:\alpha_j<0}\frac{1}{-t^{n-1}x_j}T_i\left( f_{\lVert\alpha\rVert}\right).$$
    We use here the fact that $\prod_{j:\alpha_j<0}\frac{1}{-t^{n-1}x_j}$ is independent from $x_i$ and $x_{i+1}$ (and is in particular symmetric in these variables).
The result follows then from the action of $T_i$ on the (non extended) ASEP polynomials \cref{eq:HeckeASEP}.

Now if $\alpha_i,\alpha_{i+1}<0$, then
$$T_i(f_\alpha)=(-t)^{-2n+2}T_i(f_{\dots,-\alpha_i,-\alpha_{i+1},\dots}/(x_ix_{i+1}))=(-t)^{-2n+2}T_i(f_{\dots,-\alpha_i,-\alpha_{i+1},\dots})/(x_ix_{i+1})$$
by \cref{eq:Hecke6}.
Since $-\alpha_i,-\alpha_{i+1}> 0$, we can use the equations proved above to conclude.

Similarly, the other cases are obtained from $(1)$ using \cref{eq:Hecke4,eq:Hecke5}. For example, let us check the case (2), i.e when $\alpha_i\geq 0$ and $\alpha_{i+1}<0$. We have
 $$T_i(f_\alpha)
        =(-t)^{-n+1}T_i\left( f_{\dots,\alpha_i,-\alpha_{i+1},\dots}/x_{i+1}\right).$$
        Applying \cref{eq:Hecke5}, we get
    $$T_i(f_\alpha)=\frac{(-t)^{-n+1}}{x_i}T_i(f_{\dots,\alpha_i,-\alpha_{i+1},\dots})+\frac{(-t)^{-n+1}(1-t)}{x_i}f_{\dots,\alpha_i,-\alpha_{i+1},\dots}.$$
    We now use case (1):
    \begin{itemize}
        \item If $\alpha_i>-\alpha_{i+1}>0$, then
       $$T_i(f_\alpha)=\frac{(-t)^{-n+1}}{x_i} f_{\dots,-\alpha_{i+1},\alpha_i,\dots}+\frac{(-t)^{-n+1}(1-t)}{x_i}f_{\dots,\alpha_i,-\alpha_{i+1},\dots}=f_{s_i\alpha}+(1-t)f_{\dots,-\alpha_i,-\alpha_{i+1},\dots}.$$
       \item If $\alpha_i=-\alpha_{i+1}>0$, then
       \begin{align*}
         T_i(f_\alpha)&=(-t)^{-n+1}\frac{t}{x_i}f_{\dots,\alpha_i,\alpha_i,\dots}+(-t)^{-n+1}\frac{1-t}{x_i}f_{\dots,\alpha_i,\alpha_i,\dots}\\
         &=(-t)^{-n+1}\frac{1}{x_i}f_{\dots,\alpha_i,\alpha_i,\dots}=f_{s_i\alpha}.
       \end{align*}
       
        \item If $-\alpha_{i+1}>\alpha_{i+1}>0$, then
        \begin{align*}
      T_i(f_\alpha)
      &=\frac{(-t)^{-n+1}}{x_i}tf_{\dots,-\alpha_{i+1},\alpha_i,\dots}-(-t)^{-n+1}\frac{1-t}{x_i}f_{\dots,\alpha_i,-\alpha_{i+1},\dots}+(-t)^{-n+1}\frac{1-t}{x_i}f_{\dots,\alpha_i,-\alpha_{i+1},\dots}\\
      &=tf_{\dots,\alpha_{i+1},\alpha_i,\dots}
        \end{align*}
    as desired.
    \end{itemize}
We leave the proof of case (3) as an exercise.
\end{proof}

\begin{definition}\label{def:matrix}
For $1\leq i\leq n-1$, we define the infinite matrix $\Ni=(\Ni_{\alpha,\beta})_{\alpha,\beta\in \ZZ^n}$  as the matrix with entries in $\ZZ[t]$ which encodes the action of $T_i$ on the polynomials $f_\alpha$ as given in \cref{prop:Hecke_extendedASEP}:
$$T_i(f_\alpha)=\sum_{\beta}\Ni_{\alpha,\beta} f_\beta.$$
\end{definition}
For example, if $\alpha_i>-\alpha_{i+1}>0$ then
$\Ni_{\alpha,(\dots,-\alpha_i,-\alpha_{i+1},\dots)}=1-t.$

This matrix is quasi-diagonal: if $\beta\notin\{\alpha,s_i\alpha,(\dots,-\alpha_i,-\alpha_{i+1},\dots)\}$, then $\Ni_{\alpha,\beta}=0$.

\subsection{The polynomials \texorpdfstring{$h_\alpha$}{h}}

Recall the definition of $a_{\mu}^{\lambda}$ from  \cref{sec:2line}. 
Also recall from \eqref{eq:absolute} that 
$\lVert\alpha\rVert:=(|\alpha_1|,\dots,|\alpha_n|).$
For any $\alpha\in \ZZ^n$, we set
$$\wt_\alpha:=\prod_{i,\alpha_i>0}x_i\prod_{i,\alpha_i<0}\frac{-1}{t^{n-1}}.$$

Using \cref{eq:F_decomposition1} 
we write 
$$f_\mu:=\sum_{\lambda\in \NN^n}\wt_\mu a^\lambda_{\mu} f_{\lambda^-}.$$
The polynomials $f_\alpha$ from \cref{def:extendedASEP} can then be written
\begin{equation}
    \label{eq:extendedASEP}
f_\alpha=\sum_{\lambda\in \NN^n}\wt_\alpha a^\lambda_{\|\alpha\|} f_{\lambda^-}.
\end{equation}

We now define a family of polynomials $h_\alpha$ which can be thought of as an intermediate step between the homogeneous polynomials $f_\mu$ and the interpolation polynomials $f^*_\mu$.
\begin{definition}
    Given $\alpha\in \ZZ^n$, we define
    \begin{equation}\label{eq:def_h}
      h_\alpha:=\sum_{\lambda\in \NN^n} \wt_{\alpha} a^\lambda_{\|\alpha\|}\hatfstar{\lambda^-}.
    \end{equation}
\end{definition}

\begin{lem}\label{lem:Hecke_h}
    The action of the operator $T_i$ on $h_\alpha$, is the same as its action on $f_\alpha$. In other words,
    $$T_i(h_\alpha)=\sum_{\beta\in\ZZ^n}\Ni_{\alpha,\beta} h_\beta.$$
\end{lem}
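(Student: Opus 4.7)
The plan is to mimic the proof of \cref{prop:Hecke_extendedASEP}, exploiting the structural parallel between $h_\alpha$ and $f_\alpha$: both are $\wt_\alpha$ times an expansion $\sum_\lambda a^\lambda_{\|\alpha\|}(\cdot)$, the only difference being $\hatfstar{\lambda^-}$ versus $f_{\lambda^-}$ inside. By \cref{lem:Hecke_ASEP}, the operators $T_i$ act in exactly the same way on the families $\{f_\nu\}$ and $\{\hatfstar{\nu}\}$, so one expects the Hecke action on $h$ to mirror the one on $f$. I will proceed in two steps: first establishing the analogue of \cref{lem:Hecke_ASEP} for $h_\mu$ when $\mu\in\NN^n$, then extending to arbitrary $\alpha\in\ZZ^n$ by the same algebraic manipulations as in the proof of \cref{prop:Hecke_extendedASEP}.

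For the first step, I would introduce the $\QQ(q,t)$-linear map $\Phi$ on the span of $\{f_\nu:\nu\in\NN^n\}$ defined by $\Phi(f_\nu)=\hatfstar{\nu}$. This is well defined because both families are linearly independent, and by \cref{lem:Hecke_ASEP} it intertwines every $T_i$. Setting $F_\mu:=\sum_\lambda a^\lambda_\mu f_{\lambda^-}$ and $H_\mu:=\Phi(F_\mu)=\sum_\lambda a^\lambda_\mu\hatfstar{\lambda^-}$, one has $f_\mu=\wt_\mu F_\mu$ and $h_\mu=\wt_\mu H_\mu$. A case analysis on $(\mu_i,\mu_{i+1})$ (with subcases on whether $\mu_i,\mu_{i+1}$ are zero) allows one to use \eqref{eq:Hecke1}--\eqref{eq:Hecke6} to rewrite $T_i f_\mu=T_i(\wt_\mu F_\mu)$ as a combination of $F_\mu$ and $T_i F_\mu$ with monomial coefficients in $x_i,x_{i+1}$; comparing with the known value of $T_i f_\mu$ from \cref{lem:Hecke_ASEP} yields a scalar linear relation on $F_\mu,T_i F_\mu,F_{s_i\mu}$. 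Applying $\Phi$ transfers this relation verbatim to $H_\mu,T_i H_\mu,H_{s_i\mu}$, and reassembling via the same Hecke identities produces the desired formula ($T_i h_\mu=h_{s_i\mu}$, $th_\mu$, or $th_{s_i\mu}-(1-t)h_\mu$ depending on case).

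For the second step, I would observe that $h_\alpha=h_{\|\alpha\|}\cdot\prod_{i:\alpha_i<0}\frac{-1}{t^{n-1}x_i}$, in complete parallel with $f_\alpha=f_{\|\alpha\|}\cdot\prod_{i:\alpha_i<0}\frac{-1}{t^{n-1}x_i}$. The proof of \cref{prop:Hecke_extendedASEP} works by applying $T_i$ to this product and using \eqref{eq:Hecke4} and \eqref{eq:Hecke5} to commute $T_i$ past the Laurent factors, eventually reducing to the $T_i$-action on $f_{\|\alpha\|}$. The identical computation with $f$ replaced by $h$, now invoking the relations on $h_{\|\alpha\|}$ from the first step, yields exactly the matrix $\Ni$ in each of the nine subcases.

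The hard part will be the bookkeeping in the first step. One must track how multiplication by $\wt_\mu$ interacts with $T_i$ across the subcases determined by whether $\mu_i$ and $\mu_{i+1}$ vanish, and the correction terms from \eqref{eq:Hecke4} and \eqref{eq:Hecke5} must cancel precisely to yield a clean Hecke relation for $h_\mu$ (e.g.\ $h_{s_i\mu}$ rather than $h_{s_i\mu}+(1-t)h_\mu$). This cancellation is automatic because it already occurs for $f_\mu$ by \cref{lem:Hecke_ASEP}, and it is faithfully transferred to $h_\mu$ by the intertwining $\Phi\circ T_i=T_i\circ\Phi$.
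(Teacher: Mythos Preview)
Your proposal is correct and rests on the same core idea as the paper: since $T_i$ acts identically on the families $\{f_\nu\}$ and $\{\hatfstar{\nu}\}$ (Lemma~\ref{lem:Hecke_ASEP}), the computation of $T_i(h_\alpha)$ parallels that of $T_i(f_\alpha)$ once one has commuted $T_i$ past the weight $\wt_\alpha$. Your intertwining map $\Phi$ is a clean way to encode this transfer; the paper does the same thing via an explicit coefficient identity (extracting coefficients of $f_\nu$ from two computations of $T_i f_\alpha$, then substituting into the expression for $T_i h_\alpha$).

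The one substantive organizational difference is the choice of base case. The paper starts with $\alpha_i,\alpha_{i+1}\leq 0$, where $\wt_\alpha$ is independent of $x_i,x_{i+1}$ and hence commutes with $T_i$; moreover in that case $\Ni_{\alpha,\beta}\neq 0$ forces $\wt_\beta=\wt_\alpha$, so the weights cancel uniformly and no subcases are needed. Your base case $\alpha\in\NN^n$ forces you to split according to whether $\mu_i,\mu_{i+1}$ are zero and to invoke \eqref{eq:Hecke1}--\eqref{eq:Hecke3} already in Step~1, which is more bookkeeping (though, as you correctly observe, the required cancellations are guaranteed because they already hold for $f_\mu$). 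Both routes then bootstrap to general $\alpha\in\ZZ^n$ via the same identities \eqref{eq:Hecke4}--\eqref{eq:Hecke6}, exactly as in the proof of Proposition~\ref{prop:Hecke_extendedASEP}.
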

\begin{proof}
We start by proving the result when $\alpha_i,\alpha_{i+1}\leq 0$. First, notice that in this case $\wt_\alpha$ is independent from $x_i$ and $x_{i+1}$. We then have
    \begin{align*}
      T_i(h_\alpha)
      &=T_i\left(\sum_{\lambda\in \NN^n}\wt_\alpha a^\lambda_{\lVert \alpha\rVert} \hatfstar{\lambda^-}\right),\\
      &=\sum_{\lambda\in \NN^n}\wt_{\alpha} a^\lambda_{\lVert \alpha\rVert} T_i\left(\hatfstar{\lambda^-}\right),
    \end{align*}
    since $a_{\lVert \alpha\rVert}^\lambda=a_{\lVert \alpha\rVert}^\lambda(q,t)$ is independent from the variables $x_j$. Hence, using \cref{lem:Hecke_ASEP} and \cref{def:matrix}, we have
    \begin{align}\label{eq:Ti_falpha*}
        T_i(h_\alpha)&=\sum_{\lambda,\nu\in \NN^n}\wt_{\alpha} a^\lambda_{\lVert \alpha\rVert} \Ni_{\lambda^-,\nu} \hatfstar{\nu}.
    \end{align}
    
    We now compute $T_i(f_\alpha)$ in two different ways, obtained by applying \cref{def:matrix} and \eqref{eq:extendedASEP} in one order or the other. On the one hand, we have
    \begin{equation}\label{eq:Ti_falpha1}
T_i(f_{\alpha})=\sum_{\beta\in\ZZ^n}\Ni_{\alpha,\beta}f_{\beta}=\sum_{\beta\in\ZZ^n,\kappa\in\NN^n}\Ni_{\alpha,\beta}\wt_{\beta}a_{\lVert \beta\rVert}^\kappa f_{\kappa^-}.  
    \end{equation}
    On the other hand,
\begin{align}\label{eq:Ti_falpha2}
        T_i(f_\alpha)&=T_i\left(\sum_{\lambda\in \NN^n} \wt_{\alpha} a^\lambda_{\lVert \alpha\rVert} f_{\lambda^-}\right)
        =\sum_{\lambda,\nu\in \NN^n}\wt_{\alpha} a^\lambda_{\lVert \alpha\rVert} \Ni_{\lambda^-,\nu} f_{\nu}.
    \end{align}
Notice that with the assumption $\alpha_i,\alpha_{i+1}\leq 0$, the coefficient $\Ni_{\alpha,\beta}$ is zero unless $\beta\in\{\alpha,s_i\alpha\}$ (see \cref{prop:Hecke_extendedASEP} item (1)). In particular, we have $\wt_{\beta}=\wt_\alpha$.
We can then divide \cref{eq:Ti_falpha1} and \cref{eq:Ti_falpha2} by $\wt_\alpha$, and we compare the coefficients of $f_\nu$ in the two equations (recall that ASEP polynomials are a basis by \cref{lem:basis}). We get
\begin{equation}\label{eq:coeff_fnu}
  \sum_{\kappa:\kappa^-=\nu}\ \sum_{\beta\in\ZZ^n}\Ni_{\alpha,\beta}a_{\lVert \beta\rVert}^{\kappa}
=\sum_{\lambda\in \NN^n}a^\lambda_{\lVert \alpha\rVert} \Ni_{\lambda^-,\nu},  
\end{equation}
 Injecting this into \cref{eq:Ti_falpha*}, we get 
\begin{align*}
  T_i(h_{\alpha})
  &=\sum_{\nu\in \NN^n}\wt_{\alpha}\hatfstar{\nu}\sum_{\lambda\in \NN^n} a^\lambda_{\lVert \alpha\rVert} \Ni_{\lambda^-,\nu}=\sum_{\beta\in\ZZ^n}\Ni_{\alpha,\beta}\sum_{\nu\in\NN^n}\ \sum_{\kappa:\kappa^-=\nu} a_{\lVert \beta\rVert}^{\kappa} \wt_{\beta}\hatfstar{\nu} \\
  &=\sum_{\beta\in\ZZ^n}\Ni_{\alpha,\beta}\sum_{\kappa\in \NN^n} a_{\lVert \beta\rVert}^{\kappa} \wt_{\beta}\hatfstar{\kappa^-}
=\sum_{\beta}\Ni_{\alpha,\beta}h_\beta.  
\end{align*}
This finishes the proof of the lemma in the case $\alpha_i,\alpha_{i+1}\leq 0$.

The other cases can be derived from this one using \cref{eq:Hecke1,eq:Hecke2,eq:Hecke3} (just as the cases in \cref{prop:Hecke_extendedASEP} are derived from the case $\alpha_i,\alpha_{i+1}\geq 0$). We leave this as an exercise.
\end{proof}

\subsection{Recursive decomposition for the polynomials \texorpdfstring{$f^*_\mu$}{f*}}

\begin{definition}\label{def:b_recursion}
       Let $(b^\alpha_{\mu})_{\mu\in\NN^n,\alpha\in\ZZ^n}$ 
    be the family of coefficients satisfying the following properties:
    \begin{enumerate}
        \item If $\mu\in\Pack(k,n)$ for some $k\leq n$, then
        $$b^\alpha_{\mu}=\delta_{\mu,\lVert \alpha\rVert}.$$
        \item Given $1\leq i\leq n-1$ such that $\mu_i>0$ and $\mu_{i+1}=0$, we have
        $$b^\alpha_{s_i \mu}=\sum_{\beta\in \ZZ^n}b^\beta_{\mu}\Ni_{\beta,\alpha} .$$
    \end{enumerate}
\end{definition}

It is clear from the definition that if such a family $(b^\alpha_{\mu})$ exists then it is unique. The existence will be proven combinatorially in \cref{prop:recursion}.

\begin{remark}\label{rmk:nonvanishing_condition_b}
    Note that for the family $(b^\alpha_\mu)$ satisfying the recursion of \cref{def:b_recursion}, the coefficient $b^\alpha_\mu$ is 0 unless $\alpha$ is a signed permutation of $\mu$, i.e there exists a permutation $\sigma\in S_n$ and a choice of signs $\epsilon_1,\dots,\epsilon_n\in\{\pm 1\}$ such that 
    $\alpha=(\epsilon_1\sigma(\mu_1),\dots,\epsilon_n \sigma(\mu_n))$. This can be obtained by induction on $\mu$ and using the fact that $\Ni_{\alpha,\beta}$ is 0 unless $\alpha$ is a signed permutation of $\beta$.
\end{remark}

\begin{thm}\label{thm:f_decomposition}
Let $(b^\alpha_{\mu})$ be the family of coefficients satisfying the recursion of \cref{def:b_recursion}. Define the polynomials $f^{*\lambda}_\mu$ by
$$f^{*\lambda}_\mu=f^{*\lambda}_\mu(x_1,\dots,x_n;q,t):=\sum_{\alpha\in\ZZ^n}b^\alpha_{\mu}\wt_{\alpha}a^\lambda_{\lVert \alpha\rVert}.$$
We then have 
$$f^*_\mu=\sum_{\lambda}f^{*\lambda}_{\mu}(x_1,\dots,x_n;q,t)q^{|\lambda^-|}f^*_{\lambda^-}\left(\frac{x_1}{q},\dots,\frac{x_n}{q};q,t\right)=\sum_{\lambda}f^{*\lambda}_{\mu}\widehat{f^*_{\lambda^-}}.$$
\end{thm}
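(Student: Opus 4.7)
The plan is to first rewrite the desired identity in a more compact form by interchanging the order of summation. Using the definition of $f^{*\lambda}_\mu$ and the definition \eqref{eq:def_h} of $h_\alpha$, one sees that
$$\sum_{\lambda} f^{*\lambda}_\mu \widehat{f^*_{\lambda^-}} = \sum_\alpha b^\alpha_\mu \sum_\lambda \wt_\alpha a^\lambda_{\lVert\alpha\rVert} \widehat{f^*_{\lambda^-}} = \sum_{\alpha \in \ZZ^n} b^\alpha_\mu \, h_\alpha.$$
So the theorem reduces to proving the compact identity $f^*_\mu = \sum_\alpha b^\alpha_\mu h_\alpha$ for every composition $\mu \in \NN^n$. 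Once this reformulation is in place, the rest of the proof is an induction on the number of ``zero swaps'' needed to reach $\mu$ from its packed version.

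For the base case $\mu \in \Pack(k,n)$, \cref{thm:recurrence} gives
$$f^*_\mu = \prod_{i=1}^k (x_i - t^{-n+1}) \sum_\nu a^\nu_\mu \widehat{f^*_{\nu^-}}.$$
The product expands as a sum of $2^k$ terms indexed by subsets $S \subseteq [k]$; each term matches the weight $\wt_\alpha$ of a signed composition $\alpha$ with $\lVert\alpha\rVert = \mu$, taking $\alpha_i > 0$ for $i \in S$ and $\alpha_i < 0$ otherwise. Therefore
$$f^*_\mu = \sum_{\alpha : \lVert\alpha\rVert = \mu} \wt_\alpha \sum_\nu a^\nu_{\lVert\alpha\rVert} \widehat{f^*_{\nu^-}} = \sum_{\alpha : \lVert\alpha\rVert = \mu} h_\alpha,$$
which agrees with $\sum_\alpha b^\alpha_\mu h_\alpha$ by the packed rule of \cref{def:b_recursion}.

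For the inductive step, suppose the identity holds for some composition $\mu$ with $\mu_i > 0$ and $\mu_{i+1} = 0$. Applying $T_i$ to both sides, on the left \cref{prop:T_i-f_star} gives $T_i f^*_\mu = f^*_{s_i \mu}$; on the right, $\QQ(q,t)$-linearity of $T_i$ together with \cref{lem:Hecke_h} yield
$$T_i \sum_\alpha b^\alpha_\mu h_\alpha = \sum_\alpha b^\alpha_\mu \sum_\beta \Ni_{\alpha,\beta} h_\beta = \sum_\beta \left(\sum_\alpha b^\alpha_\mu \Ni_{\alpha,\beta}\right) h_\beta = \sum_\beta b^\beta_{s_i \mu} h_\beta,$$
where the last equality is precisely the second rule of \cref{def:b_recursion}. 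Thus the identity propagates from $\mu$ to $s_i \mu$.

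It remains to observe that every composition $\mu$ is reachable from its packed version through a sequence of such moves; an explicit route is to relocate the nonzero entries into their target positions from right to left, taking advantage of the fact that in the packed version every position beyond the positive block is zero. The key insight powering the argument is that the Hecke action on $h_\alpha$ from \cref{lem:Hecke_h} is governed by exactly the same matrix $\Ni$ that defines the recursion for $b^\alpha_\mu$, so the inductive step becomes a formal matching of the two sides. The main potential pitfall is bookkeeping: the recursion in \cref{def:b_recursion} only describes the direction ``more packed $\to$ less packed,'' so one must verify that the reachability path never calls on a step going the wrong way; the right-to-left unfolding strategy satisfies this by construction.
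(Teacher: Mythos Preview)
Your proof is correct and follows essentially the same approach as the paper: both establish the identity by induction from the packed case using \cref{thm:recurrence}, and propagate it via the Hecke operators $T_i$ together with \cref{lem:Hecke_h} and the recursion in \cref{def:b_recursion}. Your upfront reformulation $f^*_\mu=\sum_\alpha b^\alpha_\mu h_\alpha$ makes the bookkeeping slightly cleaner than in the paper, where this rewriting happens inside the inductive step, but the underlying argument is the same.
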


\begin{proof}
We start from the packed case and we proceed by induction. If $\mu\in\Pack(k,n)$, then from \cref{def:b_recursion} item (1) we have 
\begin{align*}
f^{*\lambda}_\mu
=\sum_{\alpha:\lVert \alpha\rVert=\mu}\wt_{\alpha}a^\lambda_{\lVert \alpha\rVert}=a^\lambda_{\mu}\prod_{i=1}^k\left(x_i-t^{-n+1}\right).
\end{align*}
But we know from \cref{thm:recurrence} that
$$f^*_\mu=\prod_{i=1}^k\left(x_i-t^{-n+1}\right)q^{|\mu|-k}\sum_{\nu}a_\mu^\nu f^*_{\nu^-}\left(\frac{x_1}{q},\dots,\frac{x_n}{q};q,t\right).$$
This gives the theorem for packed compositions. We now assume that the result holds for $\mu$, and we fix $1\leq i\leq n-1$ such that $\mu_i>0$ and $\mu_{i+1}=0$. Let us prove it for $s_i\mu$.
We have from \cref{prop:T_i-f_star} item (1) that
$f^*_{s_i\mu}=T_i f^*_\mu.$
Using the recursion assumption we get
\begin{align*}
  f^*_{s_i\mu}
  =T_i \left(\sum_{\alpha\in\ZZ^n}b^\alpha_{\mu}\sum_{\lambda}\wt_{\alpha}a^\lambda_{\lVert \alpha\rVert}\widehat{f^*_{\lambda^-}}\right)=\sum_{\alpha\in\ZZ^n}b^\alpha_{\mu} T_i \left(h_\alpha\right)  
\end{align*}
where we used the definition of the polynomials $h_\alpha$ (see \eqref{eq:def_h}). Using \cref{lem:Hecke_h}, we get
$$f^*_{s_i\mu}=\sum_{\alpha\in\ZZ^n}b^\alpha_{\mu}\sum_{\beta\in \ZZ^n}\Ni_{\alpha,\beta} h_\beta=\sum_{\beta\in \ZZ^n}h_\beta\sum_{\alpha\in\ZZ^n}b^\alpha_{\mu}\Ni_{\alpha,\beta} .$$
Finally, item (2) of \cref{def:b_recursion} gives
\begin{align*}
  f^*_{s_i\mu}
  &=\sum_{\beta\in \ZZ^n}b_{s_i\mu}^\beta h_\beta \\
&=\sum_{\beta\in \ZZ^n}b_{s_i\mu}^\beta\sum_{\lambda} \wt_{\beta}a_{\lVert \beta\rVert}^\lambda \widehat{f^*_{\lambda^-}}\\
&=\sum_{\lambda} \widehat{f^*_{\lambda^-}}
\sum_{\beta\in \ZZ^n}b_{s_i\mu}^\beta\wt_{\beta}a_{\lVert \beta\rVert}^\lambda\\
&=\sum_{\lambda} \widehat{f^*_{\lambda^-}} f^{*\lambda}_{s_i\mu}
\end{align*}
which finishes the proof of the theorem.
\end{proof}

\section{Two-line queues and the proof of the main theorem}\label{sec:proof}

In this section, after introducing the notion of 
two-line  queues and 
two-line signed  queues, we will
complete the proof of the main theorem.

\subsection{Generalized two-line queues} \label{sec:2line}

We start by reviewing
the notion of \emph{generalized two-line queue} from 
\cite{CorteelMandelshtamWilliams2022} as well as a recurrence for ASEP polynomials.
This recurrence is  based on the fact that we can view a
multiline queue $Q$ with $L$ rows as a multiline queue $Q'$ with $L-1$ rows (the restriction of $Q$ to rows
$2$ through $L$) sitting on top of
a \emph{generalized} multiline queue $Q_0$ with $2$ rows (the restriction of $Q$ to rows $1$ and $2$).
Since $Q'$ occupies rows
$2$ through $L$ and has balls labeled $2$ through $L$, we identify $Q'$ with a
multiline queue obtained by decreasing the row labels and ball labels in the top $L-1$ rows of $Q$
by $1$. 
(Holes, represented by $0$,
remain holes.)
If the bottom row of $Q'$ is the composition $\lambda$, then after decreasing labels as above,
the new bottom row is $\lambda^-=(\lambda^-_1,\dots,\lambda^-_n)$,
where
$\lambda^-_i=\max(\lambda_i-1,0)$.
Meanwhile $Q_0$ has just two rows, but its balls are labeled $1$ through $L$;
we refer to it as a \emph{generalized two-line queue}.

\begin{definition}\label{def:two-row}
A \emph{generalized two-line queue} is a two-row multiline queue whose top and bottom rows are represented by a pair of compositions $\lambda,\mu \in \NN^n$,  satisfying the following conditions: $\lambda$ has no parts of size 1, and for each $j>1$, $\#\{i:\mu_i=j\}=\#\{i:\lambda_i=j\}$. Moreover, for each $i$, either $\mu_i=0$, or $\lambda_i \leq \mu_i$. (In other words, a larger label cannot be directly above a smaller nonzero label, as in a usual multiline queue.) 

For $\mu \in \NN^n$, we set
$\wt_{\mu}:=\prod_{i, \mu_i>0} x_i.$
Let $\mathcal{Q}_{\mu}^{\lambda}$ denote the set of (generalized)
two-line queues with bottom row $\mu$ and top row $\lambda$.
For $Q_0\in \mathcal{Q}_{\mu}^{\lambda}$, we define
\begin{align}
\wt(Q_0) &= \wt_{\pair}(Q_0) \cdot \wt_{\mu}\\
\label{amulambda}
a_{\mu}^{\lambda}& =
\sum_{Q_0\in \mathcal{Q}_{\mu}^{\lambda}} \wt_{\pair}(Q_0)\in \QQ(q,t)\\ 
\label{fmulambda}
f_{\mu}^{\lambda} & = f_{\mu}^{\lambda}(\xx;q,t) = \sum_{Q_0 \in \mathcal{Q}_{\mu}^{\lambda}} \wt(Q_0)=\wt_{\mu} \cdot a_{\mu}^{\lambda}.
\end{align}
\end{definition}

Note that the ``ball weight'' we associate to $Q_0$ only takes into account its bottom row.   This
is because we want $\wt(Q) = \wt(Q') \wt(Q_0)$, where the top $L-1$ rows of $Q$ give $Q'$ and the
bottom two rows give $Q_0$.

\begin{lem}\label{lem:recursive}\cite[Lemma 3.2]{CorteelMandelshtamWilliams2022}
We have the following recurrence for the homogeneous ASEP polynomials.
\[
f_{\mu}=\sum_\lambda f_{\mu}^{\lambda}f_{\lambda^-}.
\]
\end{lem}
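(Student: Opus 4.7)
The plan is to construct a weight-preserving bijection between multiline queues of type $\mu$ and pairs consisting of a generalized two-line queue and a smaller multiline queue, then sum. Specifically, given $Q\in \MLQ(\mu)$ with $L$ rows, I split off its bottom two rows (with the pairings between them) to form a diagram $Q_0$, and I identify the top $L-1$ rows with a multiline queue $Q'$ of $L-1$ rows by decreasing every row index and every nonzero ball label by $1$ (holes stay as holes).

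The first step is to check that this splitting lands in the right target. If row $2$ of $Q$ is the composition $\lambda$, then $\lambda$ has no parts of size $1$ (since in an MLQ only row $1$ can contain balls labeled $1$), and the compatibility conditions of \cref{def:MLQ} between rows $1$ and $2$ of $Q$ are precisely those defining $\mathcal{Q}_\mu^\lambda$. Meanwhile, the bottom row of $Q'$ after relabeling is $\lambda^-$, and the classic-layer conditions between consecutive rows of $Q'$ are inherited directly from those of $Q$. This defines a bijection
\[
\MLQ(\mu) \;\xrightarrow{\sim}\; \bigsqcup_{\lambda} \mathcal{Q}_{\mu}^{\lambda} \times \MLQ(\lambda^-).
\]

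The second step is multiplicativity of the weight. For ball-weights, the ball-weight of $Q_0$ depends only on its bottom row (which is $\mu$), contributing $\wt_\mu$; the remaining rows $2,\dots,L$ of $Q$ correspond column-for-column to the rows of $Q'$ after relabeling, and since decreasing a label by $1$ does not change whether a position is occupied, their ball-weights match. For pairing-weights, the pairings between rows $1$ and $2$ of $Q$ are literally the pairings of $Q_0$, while the pairings between rows $r$ and $r+1$ of $Q$ for $r\geq 2$ correspond to pairings between rows $r-1$ and $r$ in $Q'$. The only subtle point is that the pairing weight formula \eqref{eq:pair} contains the factor $q^{a-r+1}$; under our relabeling $(a,r)\mapsto (a-1,r-1)$ this exponent is preserved, so the per-pairing weight is unchanged. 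Hence $\wt(Q)=\wt(Q_0)\wt(Q')$.

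Putting these together and summing yields
\[
f_\mu \;=\; \sum_{Q\in \MLQ(\mu)} \wt(Q) \;=\; \sum_{\lambda}\Bigl(\sum_{Q_0\in \mathcal{Q}_\mu^\lambda}\wt(Q_0)\Bigr)\Bigl(\sum_{Q'\in \MLQ(\lambda^-)}\wt(Q')\Bigr) \;=\; \sum_{\lambda} f_\mu^\lambda\, f_{\lambda^-},
\]
as claimed. The main obstacle is purely bookkeeping: verifying invariance of the $q^{a-r+1}$ factor under relabeling, and checking that the MLQ compatibility axioms split cleanly into the two-line axioms for $Q_0$ and the MLQ axioms for the shifted $Q'$. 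No combinatorial novelty beyond this decomposition is required.
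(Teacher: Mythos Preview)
Your proposal is correct and matches the paper's approach exactly: the paper does not give a formal proof of this lemma (it is cited from \cite{CorteelMandelshtamWilliams2022}), but the paragraph preceding \cref{def:two-row} sketches precisely the decomposition you carry out, splitting $Q$ into the bottom two rows $Q_0$ and the relabeled top $L-1$ rows $Q'$. Your verification that the exponent $a-r+1$ is invariant under $(a,r)\mapsto(a-1,r-1)$ is the one nontrivial check, and it is handled correctly.
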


It follows from the definitions that $f_{\mu}^{\lambda}$ is $0$ unless $\lambda$ has parts $0,2,3,..$
and is a permutation of the composition obtained from 
$\mu$ by replacing each part equal to $1$ by $0$.

\subsection{Generalized signed two-line queues}
In this section, we define a signed version of generalized two-line queues (\cref{sec:2line}), and we prove that the associated generating functions are encoded by the recurrence of \cref{def:b_recursion}.
\begin{definition}\label{def:ghost_two-row}
A \emph{generalized signed two-line queue} is a paired ball system obtained by considering the bottom two rows of a signed multiline queue. Its bottom row is represented by a composition $\mu\in \NN^n$, and its top row by a signed permutation $\alpha$ of $\mu$.
Let $\mcG^\alpha_\mu$ denote the
set of (generalized) signed two-line queues with bottom row $\mu$ and top row $\alpha$. 
\end{definition} See \Cref{fig:ghost_two_line_queue} for an example of a signed two-line queue. Using \eqref{eq:pair2}, we define the weight of a signed two-line queue $\G\in \mcG^\alpha_\mu$ to be
$$\wt_{\pair}(\G)=\prod_{p}\wt_{\pair}(p),$$ where the product is over all nontrivial (signed) pairings of $\G$. We then define the weight generating function $G_\mu^\alpha$ of $\mcG^\alpha_\mu$ to be $$G_\mu^\alpha=G_\mu^\alpha(t):=\sum_{\G\in\mcG_\mu^\alpha}\wt_{\pair}(\G).$$

\begin{figure}[t]
    \centering
    \includegraphics[width=0.5\linewidth]{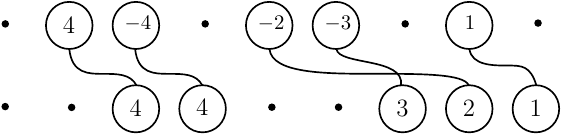}
    \caption{An example of a signed two-line queue in $\mcG_{(0,0,4,4,0,0,3,2,1)}^{(0,4,-4,0,-2,-3,0,1,0)}.$
    }
    \label{fig:ghost_two_line_queue}
\end{figure}

Recall that $\skipped$ is the statistic associated to a nontrivial pairing defined in \cref{def:wt}. We now give an equivalent ``static'' description of this statistic which follows directly from the definitions.
\begin{lem}\label{lem:skipped}
    Let $\G$ be a signed two-line queue and let $p$ be a nontrivial $a$-pairing connecting a signed ball labeled $\pm a$ in column $i$ of the top row, to a regular ball in column $j>i$ in the bottom row labeled $a$. Then $\skipped(p)$ counts the number of balls $B$ in the bottom row and in a column $r$ labeled by $c\in \NN_+$, such that $i<r<j$ and, either
    \begin{itemize}
        \item $c<a$,
        \item or $c=a$ and the ball to which $B$ is paired lies in a column $k<i$.
    \end{itemize}
\end{lem}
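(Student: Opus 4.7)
The proof is a routine unpacking of the dynamic weight definition in \cref{def:wt}; there is no serious obstacle, only a careful translation between when a bottom-row ball is ``free'' during the processing of pairings and the static conditions stated in the lemma. The plan has three short parts.

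First, I recall that $\skipped(p)$ counts free bottom-row balls in columns strictly between $i$ and $j$, and that \emph{empty} positions in that range are separately counted by $\emp(p)$. This immediately accounts for the restriction $c\in\NN_+$ in the statement, so the task reduces to characterizing, for a bottom-row ball $B$ at column $r$ (with $i<r<j$) labeled $c\in \NN_+$, exactly when $B$ is still unmatched at the moment we process $p$.

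Second, I use the signed-layer processing order from \cref{def:wt}: scan the top row in decreasing order of absolute value of label, and right to left within a fixed absolute value. The ball $B$ is paired with a unique top-row ball $B'$ of label $\pm c$ at some column $k$, and $B$ becomes matched exactly when $B'$ is processed. Hence $B'$ is processed before $p$ iff either $c>a$, or $c=a$ with $k>i$. Equivalently, $B$ remains free at the time of $p$ iff $c<a$, or $c=a$ with $k\leq i$.

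Finally, I rule out $k=i$ in the second case: the unique top-row ball at column $i$ is the top endpoint of $p$ itself, whose partner is at column $j\neq r$. So whenever $c=a$ we must have $k<i$, yielding exactly the conditions stated. As a mild sanity check, the trivially paired same-label configurations permitted by item (b') of \cref{def:ghostMLQ} behave correctly: a bottom ball at column $r$ trivially paired with its top partner has $k=r>i$, so it is matched before $p$ and is correctly excluded from the skip count by the $c=a$, $k>i$ clause.
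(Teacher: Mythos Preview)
Your proof is correct and is precisely the routine unpacking the paper alludes to when it says the lemma ``follows directly from the definitions.'' The three steps you give---identifying skipped balls as the free bottom-row balls strictly between columns $i$ and $j$, translating ``free at the time of $p$'' into the condition that the top partner $B'$ has not yet been processed (i.e.\ $c<a$, or $c=a$ with $k\leq i$), and then ruling out $k=i$---are exactly what is needed, and your sanity check on trivially paired same-label balls is a nice touch confirming the $c=a$, $k>i$ clause behaves as intended.
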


We now show below that the coefficients $(G^\alpha_\mu)$ satisfy the recursion of \cref{def:b_recursion}.

\begin{prop}\label{prop:recursion}
Fix $\mu\in\NN^n$ and $\alpha\in \ZZ^n$.  Then we have the following.
    \begin{enumerate}[font=\normalfont]
        \item\label{unpack:item 1} If $\mu\in\Pack(k,n)$ for some $0\leq k\leq n$, then
        $G^\alpha_{\mu}=\delta_{\mu,\lVert \alpha\rVert}.$
        \item\label{unpack:item 2} Given $1\leq i\leq n-1$ such that $\mu_i>0$ and $\mu_{i+1}=0$, we have
        \begin{equation}\label{eq:G}
        G^\alpha_{s_i \mu}=\sum_{\beta\in \NN^n}G^\beta_{\mu}\Ni_{\beta,\alpha} .
        \end{equation}
    \end{enumerate}
\end{prop}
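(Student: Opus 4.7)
The plan is to verify the two parts in turn: part (1) by a short combinatorial argument, and part (2) by a local rewriting at columns $i$ and $i+1$ whose weight-tracking matches the matrix $\Ni$.

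For part (1), suppose $\mu \in \Pack(k,n)$, so that the bottom row of any $\G \in \mcG^\alpha_\mu$ has regular balls exactly in columns $1,\ldots,k$. Because strands in a signed layer may only go straight down or strictly to the right (rule (a')), any top-row ball at a column $j>k$ has no regular ball directly below it and none to its right: a positive one violates rule (b'), and a negative one has no candidate partner. Hence the top-row balls also occupy exactly columns $1,\ldots,k$, and the pairing between the two rows is a bijection $\pi$ of $\{1,\ldots,k\}$ with $\pi(j)\geq j$, which forces $\pi=\mathrm{id}$. Every pairing is therefore trivial, the label-matching condition gives $|\alpha_j|=\mu_j$ for all $j$, and conversely for any $\alpha$ with $\lVert\alpha\rVert=\mu$ the trivial configuration satisfies rules (a')--(c') and has $\wt_{\pair}=1$. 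This yields $G^\alpha_\mu=\delta_{\mu,\lVert\alpha\rVert}$.

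For part (2), I plan to define a local modification at columns $i,i+1$. Recall that in $\mu$ the bottom row carries a regular ball of label $a:=\mu_i$ at column $i$ and is empty at column $i+1$, while in $s_i\mu$ these two slots are swapped. Given $\G\in\mcG^\alpha_{s_i\mu}$, I inspect only its restriction to columns $i,i+1$: the top-row signed balls there (if any), the unique bottom-row ball at column $i+1$, and the strand incident to it. A case analysis on the signs and magnitudes of $(\alpha_i,\alpha_{i+1})$ produces, in each case, one or two signed two-line queues $\G'\in\mcG^\beta_\mu$ with $\beta$ one of $\alpha$, $s_i\alpha$, $(\dots,-\alpha_i,-\alpha_{i+1},\dots)$---the three candidates for which $\Ni_{\beta,\alpha}$ can be nonzero. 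Two observations make the move genuinely local: by \cref{lem:skipped}, the $\skipped$ and $\emp$ contributions to a strand's weight depend only on the balls lying between its endpoints, so only the strands touching columns $i,i+1$ can change weight; and the forbidden-configuration rules (a')--(c') exclude exactly those $\beta$ for which $\Ni_{\beta,\alpha}=0$.

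The main obstacle is the bookkeeping. Since $\Ni$ is quasi-diagonal but its entries depend intricately on the signs and relative sizes of $\alpha_i$ and $\alpha_{i+1}$ (compare the nine sub-cases of \cref{prop:Hecke_extendedASEP}), the local move and its weight effect must be worked out case by case. In the mixed-sign cases the recursion for $G^\alpha_{s_i\mu}$ involves two distinct $\beta$'s simultaneously, so the local correspondence has to combine a non-trivial rerouting of strands with a possible sign flip of $(\alpha_i,\alpha_{i+1})$; verifying that the resulting $(1-t)$ and $t$ factors, coming from the $\skipped$ and $\emp$ statistics on the few strands that change, reproduce each nonzero entry of $\Ni$ exactly is where the work lies.
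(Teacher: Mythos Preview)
Your proposal is correct and follows essentially the same approach as the paper: part~(1) is the same packing argument, and part~(2) is handled in the paper by exactly the local case analysis at columns $i,i+1$ you describe, organized there via diagrammatic generating functions (one diagram per local pairing configuration) and then verified case by case against the list \eqref{subeq:1}--\eqref{subeq:11}. One small refinement worth noting: rather than a map sending a single queue $\G$ to ``one or two'' queues $\G'$, the paper works throughout at the level of weighted sums of queues sharing a fixed local picture at columns $i,i+1$, and matches these sums (sometimes splitting further on whether the bottom ball's label $c$ equals $a$ or $b$, etc.); this is what makes the $(1-t)$ and $t$ bookkeeping clean.
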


For convenience, we rewrite  \eqref{eq:G} explicitly, by replacing the coefficients $\Ni_{\alpha, \beta}$ defined in \cref{def:matrix} by their values: the coefficient $G_{s_i\mu}^{\alpha}$ is equal to
\begin{subequations}\label{eq:G_recursion}
\begin{align}
    &t G^\alpha_\mu 
    &&\text{if $\alpha_i=\alpha_{i+1}$},\label{subeq:1}\\
    &       t G_\mu^{s_i\alpha}
    &&\text{if $\alpha_i>\alpha_{i+1}\geq 0$ or $-\alpha_i>-\alpha_{i+1}>0$},\label{subeq:2}\\
    &        G_\mu^{s_i\alpha}-(1-t)G^\alpha_{\mu}
    &&\text{if $\alpha_{i+1}>\alpha_{i}\geq 0$ or 
            $-\alpha_{i+1}>-\alpha_{i}> 0$},\label{subeq:3}\\
    &        G_\mu^{s_i\alpha}-(1-t)G_\mu^{\alpha}
    &&\text{if $-\alpha_i=\alpha_{i+1}>0$},\label{subeq:4}\\
    &         G_\mu^{s_i\alpha}-(1-t)G_\mu^{\alpha}
    &&\text{if $\alpha_{i+1}>-\alpha_{i}> 0$,}\label{subeq:5}\\
    &         tG_\mu^{s_i\alpha}-(1-t)G_\mu^{\alpha}
             &&\text{if $-\alpha_i>\alpha_{i+1}=0$,}\label{subeq:6}\\
    &         tG_\mu^{s_i\alpha}-(1-t)\left(G_\mu^{\alpha}-G^{\dots,-\alpha_i,-\alpha_{i+1},\dots}_\mu\right)
             &&\text{if $-\alpha_i>\alpha_{i+1}> 0$},\label{subeq:7}\\
    &         G_\mu^{s_i\alpha}
             &&\text{if $-\alpha_{i+1}>\alpha_{i}=0$,}\label{subeq:8}\\
    &         t G_\mu^{s_i\alpha}
    &&\text{if $\alpha_i>-\alpha_{i+1}> 0$},\label{subeq:9}\\
    &         G_\mu^{s_i\alpha}-(1-t)G^{\dots,-\alpha_i,-\alpha_{i+1},\dots}_\mu
             &&\text{if $-\alpha_{i+1}>\alpha_{i}>0$,}\label{subeq:10}\\
    &         t G_\mu^{s_i\alpha}&&\text{if $\alpha_i=-\alpha_{i+1}>0$}\label{subeq:11}.
\end{align}
\end{subequations}

\begin{proof}
We start by proving \cref{unpack:item 1} of the proposition. When $\mu\in\Pack(k,n)$ for some $0\leq k\leq n$, the balls in the bottom row of a signed two-line queue $\G\in\mcG_\mu^{\alpha}$  occupy positions $1,\dots,k$. Since in such a system all pairings go from left to right, this implies that the balls in the top row are also in positions $1,\dots,k$ and all pairings are trivial. Recall that a pairing connects two balls with labels of the same absolute value. This finishes the proof of 
\cref{unpack:item 1}.

    We now prove \cref{unpack:item 2}.  We will prove the most interesting of the cases (38a)--(38k); the reader can find the rest of the cases in \cite{BDWv1}.
    We start by introducing some notation. We will represent generating functions of signed multiline queues using diagrams. For example
    $$G_{\mu}^\beta= 
\left[\includegraphics[width=0.25\linewidth,valign=c]{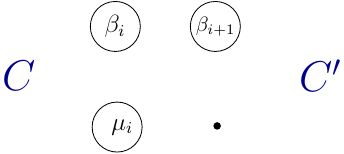}\right]$$
   represents a signed two-line queue 
   where $\mu_{i+1}=0$, and $C$ (respectively $C'$) is the part of the queue which lies in columns $j<i$ (respectively, $j>i+1$). 
So we want to prove that \begin{equation}\label{eq:diagrams}
\left[\includegraphics[width=0.25\linewidth,valign=c]{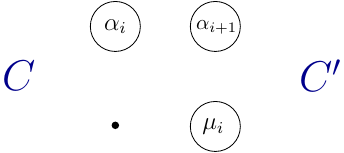}\right] =\sum_{\beta}\Ni_{\beta,\alpha} \left[\includegraphics[width=0.25\linewidth,valign=c]{Transition_diagrams/Transition01.pdf}\right].  
    \end{equation}
    To do so, we construct weight preserving bijection between these classes of multiline queues. In our bijections, the parts $C$ and $C'$ parts will not change,  and we will only be studying pairings which connect to at least one of the balls in columns $i$ and $i+1$. Thus, for simplicity, in the diagrams that follow, we will omit $C$ and $C'$.

    Recall that in each signed layer, the three configurations of \Cref{fig:forbidden_configurations_ghost} are forbidden. We will use without further mention that the contribution of diagrams containing one of these configurations is 0. This implies in particular that in the previous diagrams of \cref{eq:diagrams}, we always have $\alpha_i,\beta_{i+1}\leq 0$. In what follows, the (nonempty) balls will be represented by labels $\pm a,\pm b,\pm c$ with $a,b,c>0$. We will use the description of the $\skipped$ statistic given in \cref{lem:skipped}.

    The purpose of using the diagrammatic equations is that they are convenient to write decompositions of the generating functions. For example, we have
    \begin{align*}    \left[\includegraphics[height=0.12\linewidth,valign=c]{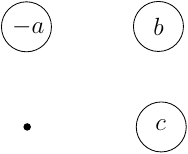}\right]=
 \left[\includegraphics[height=0.12\linewidth,valign=c]{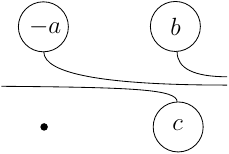}\right]
+\mathbbm{1}_{b=c}\left[\includegraphics[height=0.12\linewidth,valign=c]{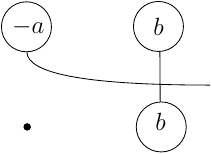}\right]+
\mathbbm{1}_{a=c}\left[\includegraphics[height=0.12\linewidth,valign=c]{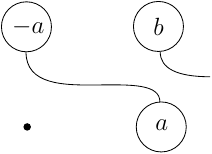}\right].
    \end{align*}
    
    \begin{itemize}
        \item  Case $\alpha_i=\alpha_{i+1}=0$ (First part of the proof of \cref{subeq:1}).  We claim that
        $$\left[\includegraphics[height=0.12\linewidth,valign=c]{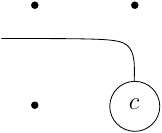}\right]=t\left[\includegraphics[height=0.12\linewidth,valign=c]{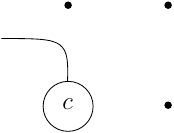}\right].$$
        This is easy to check: we get an extra factor of $t$ on the right hand side, because the number of empty positions contributing to the weight $\wt_{\pair}(p)$ of the pairing differs by 1.
        \item Case $\alpha_i=\alpha_{i+1}>0$ (Second part of the proof of \cref{subeq:1}).
        We want to prove that for $a,c>0$
        \begin{align*}
            \left[\includegraphics[height=0.12\linewidth,valign=c]{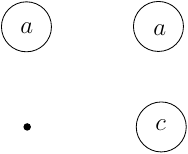}\right]
        &=t\left[\includegraphics[height=0.12\linewidth,valign=c]{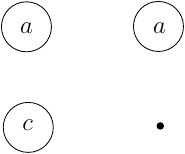}\right]
        \end{align*}
        This is trivially true since both of these diagrams contain forbidden configurations.
 \item Case $\alpha_i=\alpha_{i+1}<0$ (Third part of the proof of \cref{subeq:1}).
        We want to prove that for $a,c>0$
        \begin{align*}
\left[\includegraphics[height=0.12\linewidth,valign=c]{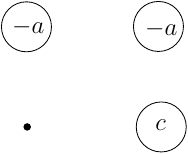}\right]
&=t\left[\includegraphics[height=0.12\linewidth,valign=c]{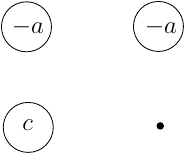}\right].
        \end{align*}
    First, we have
\begin{subequations}
\begin{align}\left[\includegraphics[height=0.12\linewidth,valign=c]{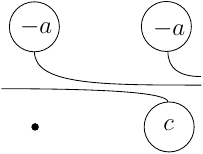}\right]
&=t\left[\includegraphics[height=0.12\linewidth,valign=c]{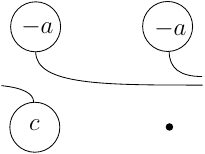}\right]\label{eq:transition11}
        \end{align}
        
        Note that in order to avoid the forbidden configurations, necessarily $c\leq a$.  Now the claim is true because the diagram on the left
        has one more skipped ball than the diagram on the right. Moreover,
        \begin{align}\left[\includegraphics[height=0.12\linewidth,valign=c]{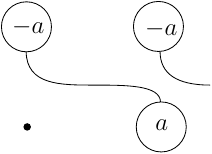}\right]
        &=-(1-t)\left[\includegraphics[height=0.12\linewidth,valign=c]{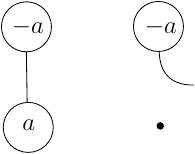}\right],
        \label{eq:transition12}
        \end{align}
        \begin{align}
\left[\includegraphics[height=0.12\linewidth,valign=c]{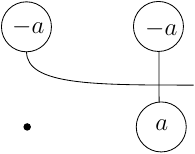}\right]
&=\left[\includegraphics[height=0.12\linewidth,valign=c]{Transition_diagrams/Transition82.pdf}\right].
    \label{eq:transition13}
        \end{align}
\end{subequations}
        The result is then obtained by summing $\text{\cref{eq:transition11}}+\mathbbm{1}_{c=a}\left(\text{\cref{eq:transition12}}+\text{\cref{eq:transition13}}\right)$.

    \item  Case $-\alpha_i>\alpha_{i+1}>0$ (Proof of \cref{subeq:7}). We want to prove that for $a>b>0$, we have
        \begin{align*}
\left[\includegraphics[height=0.12\linewidth,valign=c]{Transition_diagrams/Transition10.pdf}\right]
&=t\left[\includegraphics[height=0.12\linewidth,valign=c]{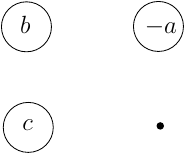}\right]-(1-t)\left[\includegraphics[height=0.12\linewidth,valign=c]{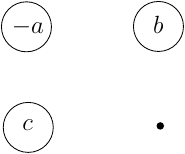}\right]+(1-t)\left[\includegraphics[height=0.12\linewidth,valign=c]{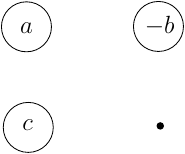}\right]\\
&=t\left[\includegraphics[height=0.12\linewidth,valign=c]{Transition_diagrams/Transition110.pdf}\right]+(1-t)\left[\includegraphics[height=0.12\linewidth,valign=c]{Transition_diagrams/Transition111.pdf}\right].
        \end{align*}
Notice that

\begin{subequations}
\begin{align}
\left[\includegraphics[height=0.12\linewidth,valign=c]{Transition_diagrams/Transition102.pdf}\right]
&=t\left[\includegraphics[height=0.12\linewidth,valign=c]{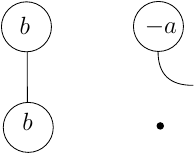}\right],\label{eq:transition21}
        \end{align}
        \begin{align}
\left[\includegraphics[height=0.12\linewidth,valign=c]{Transition_diagrams/Transition103.pdf}\right]
&=(1-t) \left[\includegraphics[height=0.12\linewidth,valign=c]{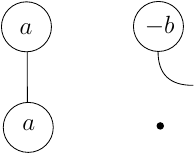}\right],\label{eq:transition22}
        \end{align}
here we multiply by $1-t$ since we go from a diagram with one nontrivial negative pairing, to a diagram with two nontrivial parings, one of them is positive and the other one is negative.
Finally, we prove that
\begin{align}
    \left[\includegraphics[height=0.12\linewidth,valign=c]{Transition_diagrams/Transition101.pdf}\right]=t\left[\includegraphics[height=0.12\linewidth,valign=c]{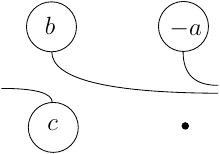}\right]+(1-t)\left[\includegraphics[height=0.12\linewidth,valign=c]{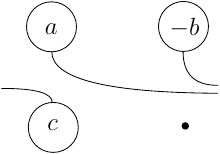}\right].\label{eq:transition23}
\end{align}
\end{subequations}

We distinguish two cases. When $b<a<c$ we have
        \begin{align*}
\left[\includegraphics[height=0.12\linewidth,valign=c]{Transition_diagrams/Transition101.pdf}\right]
&=\left[\includegraphics[height=0.12\linewidth,valign=c]{Transition_diagrams/Transition114.pdf}\right]\\
&=t\left[\includegraphics[height=0.12\linewidth,valign=c]{Transition_diagrams/Transition114.pdf}\right]+(1-t) \left[\includegraphics[height=0.12\linewidth,valign=c]{Transition_diagrams/Transition113.pdf}\right].
        \end{align*}
        where we used the fact that 
$$\left[\includegraphics[height=0.12\linewidth,valign=c]{Transition_diagrams/Transition114.pdf}\right]=\left[\includegraphics[height=0.12\linewidth,valign=c]{Transition_diagrams/Transition113.pdf}\right]$$
When $b<c\leq a$
\begin{align*}
    \left[\includegraphics[height=0.12\linewidth,valign=c]{Transition_diagrams/Transition101.pdf}\right]=t\left[\includegraphics[height=0.12\linewidth,valign=c]{Transition_diagrams/Transition114.pdf}\right]=t\left[\includegraphics[height=0.12\linewidth,valign=c]{Transition_diagrams/Transition114.pdf}\right]+(1-t)\left[\includegraphics[height=0.12\linewidth,valign=c]{Transition_diagrams/Transition113.pdf}\right]
\end{align*}
The result is obtained by summing 
$\mathbbm{1}_{c=b}\text{\cref{eq:transition21}}+\mathbbm{1}_{c=a}\text{\cref{eq:transition22}}+\text{\cref{eq:transition23}}$.
    \end{itemize}
For proofs of the remaining cases, see \cite{BDWv1}.
\qedhere
    
\end{proof}

\subsection{Completing the proof of the main theorem}
Recall that if $\mu = (\mu_1,\dots, \mu_n) \in \{0,1,\dots, L\}^n$ then $F_\mu^*(\bfx,q,t)$ is the generating function of the signed multiline queues of type $\mu$ (see \cref{def:Fmu}). Our goal is to prove that
$f_\mu^*=F_\mu^*$. We start with the following lemma.

 \begin{lem}\label{lem:F_decomposition}
 For any  composition $\mu$, we have
 \begin{equation}\label{eq:F_decomposition}
   F^*_\mu=\sum_{\lambda\in\NN^n}{F^{*\lambda}_\mu} q^{|\lambda^-|} F^*_{\lambda^{-}}(x_1/q,\dots,x_n/q),  
 \end{equation}
 where 
 $$F^{*\lambda}_{\mu}:=\sum_{\alpha\in\ZZ^n}G_\mu^\alpha \wt_\alpha a^\lambda_{\lVert \alpha\rVert}.$$
 \end{lem}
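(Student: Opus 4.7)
The plan is to prove \cref{lem:F_decomposition} by slicing each signed multiline queue into its bottom two layers plus the rest, and matching weights. More concretely, given $\G \in \SMLQ(\mu)$ with $L$ rows, I would decompose it uniquely into three pieces: (i) a generalized signed two-line queue $\G_0$ consisting of rows $1$ and $1'$, with bottom row $\mu$ and top row some signed composition $\alpha$ (necessarily a signed permutation of $\mu$); (ii) a generalized classic two-line queue $Q_0$ consisting of rows $1'$ and $2$, with bottom $\|\alpha\|$ and top some composition $\lambda$ satisfying the hypotheses of \cref{def:two-row}; and (iii) the sub-signed-MLQ $\G'$ occupying rows $2', 2, \dots, L', L$, whose row-$2$ composition matches the top $\lambda$ of $Q_0$. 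The key structural observation is that the validity conditions in \cref{def:ghostMLQ} are local to pairs of adjacent rows, so any compatible triple $(\G_0, Q_0, \G')$ glues to a unique valid element of $\SMLQ(\mu)$.

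Next I would verify that the weight factorizes as
\begin{equation*}
\wt(\G) \;=\; \wt_\alpha \cdot \wt_{\pair}(\G_0) \cdot \wt_{\pair}(Q_0) \cdot \wt(\G'),
\end{equation*}
where $\wt_\alpha = \prod_{i:\alpha_i>0} x_i \prod_{i:\alpha_i<0} (-t^{-(n-1)})$ is the ball weight of row $1'$ (specializing $r=1$ in \eqref{eq:ball}), and $\wt(\G')$ inherits its ball weights from $\G$. Summing over signed two-line queues with fixed top $\alpha$ gives $G^\alpha_\mu$ by the definition of the two-line generating function, and summing over classic two-line queues with bottom $\|\alpha\|$ and top $\lambda$ gives $a^\lambda_{\|\alpha\|}$ by \eqref{amulambda} (since the classic layer ignores signs and only uses absolute values, per \cref{def:ghostMLQ}).

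The main technical subtlety, and the step I expect to be the real obstacle, is reconciling the ball weights of $\G'$ with those of a standard signed MLQ of type $\lambda^-$. After relabeling (decrementing all row indices and all nonzero ball labels by $1$), $\G'$ becomes a signed MLQ of type $\lambda^-$ with $L-1$ rows, but each negative ball in the \emph{new} row $r'$ carries weight $-q^{r}/t^{n-1}$ rather than the standard $-q^{r-1}/t^{n-1}$, because it inherited its weight from row $(r+1)'$ of $\G$. Hence $\wt(\G')$ equals the standard weight of the relabeled signed MLQ multiplied by $q^{\#\text{neg balls in }\G'}$. Since the total number of balls in a signed MLQ of type $\lambda^-$ equals $|\lambda^-|$, the substitution $x_i \mapsto x_i/q$ multiplies the standard weight by $q^{-\#\text{pos balls}}$, and therefore
\begin{equation*}
\widehat{F^*_{\lambda^-}}(\bfx) = q^{|\lambda^-|} F^*_{\lambda^-}(\bfx/q) = \sum_{\G' \in \SMLQ(\lambda^-)} \wt_{\text{std}}(\G')\, q^{\#\text{neg balls in }\G'}
\end{equation*}
is exactly the generating function for the contribution of $\G'$ in the original weighting. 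Grouping $\sum_{\G\in \SMLQ(\mu)} \wt(\G)$ according to the triple $(\alpha, \lambda, \G')$ and substituting the identifications above gives
\begin{equation*}
F^*_\mu \;=\; \sum_\lambda \Bigl(\sum_\alpha G^\alpha_\mu\, \wt_\alpha\, a^\lambda_{\|\alpha\|}\Bigr) \widehat{F^*_{\lambda^-}} \;=\; \sum_\lambda F^{*\lambda}_\mu\, \widehat{F^*_{\lambda^-}},
\end{equation*}
which is the desired \eqref{eq:F_decomposition}. The base case $L=1$ (where $\G'$ is empty, $\lambda = (0,\dots,0)$, and $a^{(0,\dots,0)}_{\|\alpha\|}=1$) should be checked separately and reduces to the definition of $F^*_\mu$.
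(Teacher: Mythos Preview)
Your proposal is correct and follows essentially the same approach as the paper's own proof: both decompose a signed multiline queue into its bottom signed layer $\G_0$, the bottom classic layer $Q_0$, and the residual signed multiline queue $\G'$, and both handle the row-index shift via the substitution $x_i\mapsto x_i/q$ together with the observation that the $|\lambda^-|$ signed balls in $\G'$ each pick up a factor of $q$. Your factorization of the weight and the counting identity $\#\text{pos}+\#\text{neg}=|\lambda^-|$ are exactly the ingredients the paper uses.
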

\begin{proof}
    A $(2L \times n)$ signed multiline queue $Q$ of type $\mu$  is obtained as follows:
    \begin{itemize}
        \item we choose a signed permutation $\alpha$ of $\mu$, and  a generalized signed two-line multiline queue $Q_0\in\mcG_\mu^{\alpha}$ (see \cref{def:ghost_two-row}),
        \item we choose a permutation $\lambda$ of the composition obtained from $\lVert \alpha\rVert$ by replacing $1$'s by $0$'s, and we choose a generalized two-line multiline queue $Q_1\in \mathcal{Q}_{\lVert \alpha\rVert}^\lambda$ (see \cref{def:two-row}),
        \item we choose a $(2(L-1) \times n)$ signed multiline queue $Q_2$ of type $\lambda^-$,
        \item we  glue $Q_1$ on  top of $Q_0$: in this operation, a ball $B_i$ from the top row of $Q_0$ labeled $\alpha_i$ is superposed with a ball $B'_i$ from the bottom row of $Q_1$ labeled $|\alpha_i|$, The new ball will then be labeled $\alpha_i$.
        \item we glue $Q_2$ on top of $Q_1$, after increasing the labels of all balls in $Q_2$ by 1.
    \end{itemize}
    Note that in this operation, the row of each ball in $Q_2$ increase by 1, and as a consequence the weight of each negative ball in $Q_2$ (as defined in \cref{def:wt}) is multiplied by $q$. Since $Q_2$ has $2|\lambda^{-}|$ balls (only half of them have weights),  the new shifted ball-weight  is obtained by
    $$\wt_{\ball}(Q)=\wt_{\alpha}q^{|\lambda^-|}\wt_{\ball}(Q_2)(x_1/q,\dots,x_n/q).$$
    Moreover, the pair weight of $Q$ is obtained as the product
    $$\wt_{\pair}(Q)=\wt_{\pair}(Q_0)\wt_{\pair}(Q_1)\wt_{\pair}(Q_2).$$
    We conclude using the fact that, by definition, $G_\mu^\alpha$ is the generating function of $\mcG_\mu^\alpha$ and $a_{\lVert \alpha\rVert}^\lambda$ is the generating function of $\mathcal{Q}_{\lVert \alpha\rVert}^\lambda$.
\end{proof}

In the following, we will use the convention that the empty signed multiline queue is the unique one of type $(0,\dots,0)$ and that it has total weight 1. As a consequence,
\begin{equation}\label{eq:F_empty}
    F^*_{(0,\dots,0)}=1,
\end{equation}
With this convention, \cref{lem:F_decomposition} holds in particular when $\mu = (\mu_1,\dots, \mu_n) \in \{0,1\}^n$. Indeed, in this case the only choice of $\lambda$ in \cref{eq:F_decomposition} is $(0,\dots,0)$, and the same proof then works. 

\begin{proof}[Proof of \cref{thm:main}]
    We proceed by induction on $L\geq 0$. When $L=0$, we have from \cref{eq:F_empty} that $F^*_{(0,\dots,0)}=1$. It is clear from the definitions (\cref{def:ASEP} and \cref{thm:Knop_Sahi}) that this corresponds to $f^*_{(0,\dots,0)}=E^*_{(0,\dots,0)}.$
    We now assume the result for all compositions $\lambda\in\{0,1,\dots, L\}^n$ and we fix $\mu\in\{0,1,\dots, L+1\}^n$.
    We start by applying \cref{lem:F_decomposition} and the induction assumption:
    $$F^*_\mu=\sum_{\lambda\in\NN^n}{F^{*\lambda}_\mu} q^{|\lambda^-|} F^*_{\lambda^{-}}\left(\frac{x_1}{q},\dots,\frac{x_n}{q}\right)=\sum_{\lambda\in\NN^n}{F^{*\lambda}_\mu} q^{|\lambda^-|} f^*_{\lambda^{-}}\left(\frac{x_1}{q},\dots,\frac{x_n}{q}\right),  
$$
with $F^{*\lambda}_{\mu}:=\sum_{\alpha\in\ZZ^n}G_\mu^\alpha \wt_\alpha a^\lambda_{\lVert \alpha\rVert}.$
We know from \cref{prop:recursion} that the coefficients $(G_\mu^\alpha)$ satisfy the recursion of \cref{def:b_recursion}. This allows us to apply  \cref{thm:f_decomposition}, and we get
\begin{equation}\label{eq:decomposition_f}
f^*_\mu=\sum_{\lambda}F^{*\lambda}_{\mu}(x_1,\dots,x_n)q^{|\lambda^-|}f^*_{\lambda^-}\left(\frac{x_1}{q},\dots,\frac{x_n}{q}\right)=F^{*}_{\mu}
\end{equation}
as desired.
\end{proof}

\begin{remark}
Using a variant of the combinatorial recursion given in \cref{lem:F_decomposition}, one can similarly show that the polynomial $h_\alpha$ defined by \cref{eq:def_h} is the weighted generating functions of a variant of signed multiline queues whose bottom row is Row 1', and that row has type $\alpha$. Indeed, such multiline queues are obtained by gluing generalized two-line queues as in the proof of \cref{lem:F_decomposition}, alternating signed and classical layers, but with the bottom layer being classical.     
\end{remark}

\section{A tableaux formula for interpolation Macdonald polynomials}\label{sec:tableau}

In this section we give a tableaux formula for interpolation ASEP and Macdonald
polynomials, see \cref{thm:tableaux}, and prove that it is equivalent to the signed multiline queue formula we gave in \cref{thm:main}.
We then give a tableaux formula for the 
\emph{integral form} $J_{\lambda}^*$ of interpolation Macdonald polynomials, see \cref{cor:integral_form}, and give a combinatorial proof of an integrality result, see \cref{thm:integrality}.

Let $\lambda=(\lambda_1,\dots,\lambda_n)$ be a partition with  $\lambda_i\in \NN$ and largest part $L$.
The \emph{(doubled) diagram} $\DD=\DD_{\lambda}$ associated to $\lambda$ is a
 sequence of $n$ columns of boxes where the $i$th column contains  $2\lambda_i$ boxes (justified to the bottom).
 We number the rows of $\DD$ from bottom to
top by $1, 1', 2, 2',\dots,L,L'$ and the columns from left to right (starting from column $1$). Abusing notation slightly,
we often use $\DD$ to refer to the collection of boxes in $\DD$. We let $\DD^r$ and $\DD^{r'}$ denote the collection of boxes in $\DD$ in row $r$ and $r'$, respectively.  We also let $\DD^{\classic}$ (respectively, $\DD^{\primed}$) denote
the set of boxes in $\DD$ that come from \emph{classic rows} $1,2,\dots,L$ (respectively, \emph{primed rows} $1',2',\dots,L'$).

We use $(i,j)$ to refer to the box in column $i$ and row $j$.
For a box $x=(i,j)$, we denote by $d(x)=(i,j^-)$ the box directly below it (if it exists).

\subsection{The tableaux formula for \texorpdfstring{$P_{\lambda}^*$}{P*}}

We now explain how to map each signed multiline queue to a tableau, 
in particular, to a filling of a diagram as above.

 \begin{definition}\label{def:Tab}
Suppose $\mu=(\mu_1,\dots,\mu_n)$ is a composition with maximal entry $L$
  and let $\G\in\SMLQ(\mu)$.
Let $\lambda$ be the partition obtained from $\mu$ by arranging its parts
in decreasing order.  
We define a total order on the strands of linked balls,
where the longest strands come earlier, and if two strands have the same length, the one whose top ball is to the right comes first.  
Now to each strand of linked balls we associate a column
whose entries record the column locations of its balls -- with a sign to indicate when a ball is signed -- 
and we then concatenate these columns according to the above total order.
Let $\Tab(\G)$ denote the resulting tableau.
        \end{definition}
It follows from the definition that the top entries of columns, when 
they are at the same height, are listed in decreasing order of their absolute value.

\begin{figure}[!ht]
\includegraphics[height=1.4in]{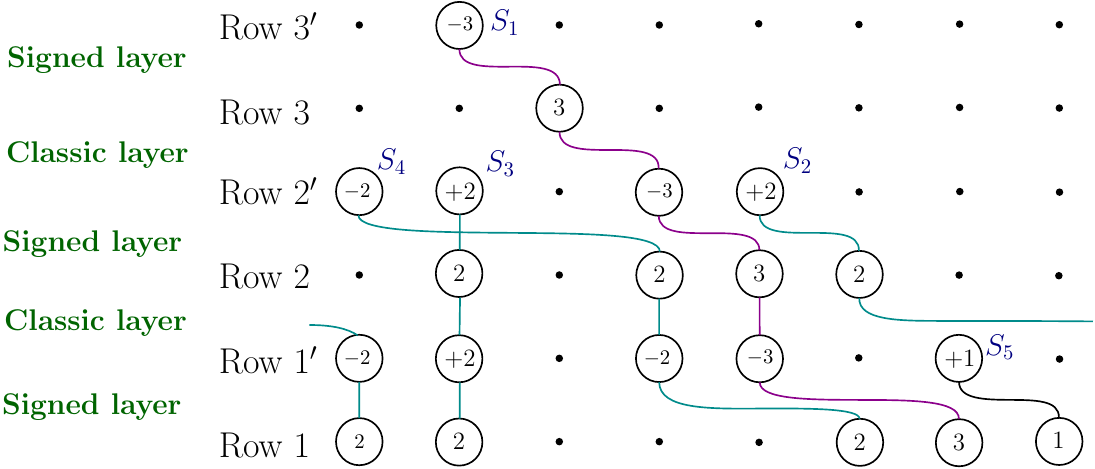} \hspace{.5in} \includegraphics[height=1.4in]{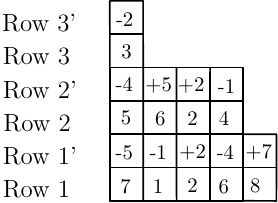}
 \caption{
At left: a signed multiline queue of type $(2,2,0,0,0,2,3,1)$.  The $i$-th strand (using the total order of \cref{def:Tab}) is labeled by $S_i$. At right: the corresponding signed queue tableau, where the $i$th column corresponds to the $i$th strand.
}
\label{fig:tableau_example}
 \end{figure}

\Cref{fig:tableau_example} illustrates
the signed multiline queue $\G$ from \Cref{fig:GMLQ_example} and the corresponding 
tableau $\Tab(\G).$
Our next goal is to characterize the tableau of the form $\Tab(\G)$, and rewrite our main theorem in terms of statistics on these tableaux.  We will define \emph{signed queue tableaux} in what follows; and as we define them, we will explain how their properties capture
the properties of signed multiline queues via the map $\Tab$ above.

\begin{definition}
        \label{def:filling}
        For $\lambda=(\lambda_1,\dots,\lambda_n)$ a partition, a \emph{filling} $\phi:D_{\lambda} \to [\pm n]$ of ${\DD}_{\lambda}$ is a map from $\DD_{\lambda}$ to $[\pm n]=\{1,2,\dots,n\} \cup \{-1,-2,\dots,-n\}$, such that:
\begin{itemize}
\item the top entries of columns, when they are at the same height, decrease in absolute value from left to right;
\item each classic row $r$ contains only positive integers, but a signed row $r'$ may contain both positive and negative integers;
\item if row $r'$ contains a positive integer $j$, then row $r$ must also contain
a $j$;\footnote{This requirement corresponds to the fact that in a signed row of a signed 
multiline queue $\G$, a regular ball cannot have an 
empty spot directly underneath it (see the rightmost forbidden configuration in \Cref{fig:forbidden_configurations_ghost})}.
\item we have that $|\phi(d(x))|\geq |\phi(x)|$ for any cell $x$ in a row $r'$.\footnote{This requirement corresponds to the fact that signed pairings cannot wrap around.}
\end{itemize}
We say that a box containing a positive integer (respectively, negative integer) is a \emph{positive cell} (respectively, \emph{negative cell}).
\end{definition}

Given any row $j$ of a diagram, we let $j^-$ denote the row directly under $j$, if it exists.  So we have that
$$j^-=\begin{cases} 
       r &\text{ if }j=r'\text{ for some $r$}\\
       (r-1)' &\text{ if }j=r\text{ for some $r\geq 2$}.
       \end{cases}
       $$

\begin{definition}\label{def:attacking}
Let $\phi:{\DD}_{\lambda} \to [\pm n]$ be a filling, 
 and let $(i,j)\in \DD_{\lambda}$.  If $(i,j)$ is a positive cell, then we say that it 
 \emph{attacks} the following boxes of ${\DD}_{\lambda}$:
\begin{itemize}
\item[(i.)] $(i',j)\in {\DD}_{\lambda}$ where $i\neq i'$,\footnote{\label{f:sameplace}This will correspond to the fact that in $\G$, we cannot have two balls in the same location.}
\item[(ii.)] $(i',j^-)\in {D}_{\lambda}$ where $i'\neq i$ such that $\lambda_i\geq \lambda_{i'}$,\footnote{\label{f:forbidden_1}This will correspond
to \Cref{fig:forbidden_configurations_classic} and the leftmost forbidden configuration of \Cref{fig:forbidden_configurations_ghost}.}
\end{itemize}
 If $(i,j)$ is a negative cell, then we say that it 
 \emph{attacks} the following boxes of $\DD_{\lambda}$:
\begin{itemize}
\item[(i.)] $(i',j)\in {\DD}_{\lambda}$ where $i\neq i'$,\footnoteref{f:sameplace}
\item[(ii.)] $(i',j^-) \in {\DD}_{\lambda}$ where $i'<i$ such that $\lambda_{i'}>\lambda_{i}$.\footnote{This will correspond to the middle forbidden configuration
of \Cref{fig:forbidden_configurations_ghost}.}
\end{itemize}
\end{definition}

\begin{definition} \label{def:QT}
        Let $\lambda=(\lambda_1,\dots,\lambda_n)$ be a partition.
         A \emph{signed queue tableau} of shape $\lambda$ is a filling $\phi:{\DD}_{\lambda}\to [\pm n]$ such that if one cell attacks another, the two 
         cells cannot contain entries with the same absolute value. 
We define the \emph{type} of the tableau to be the composition
$\mu=(\mu_1,\dots,\mu_n)$ such that $\mu_i$ equals half the height of the column which contains an $i$ in Row 1.
If $i$ does not occur in Row 1, $\mu_i=0$.
         Let $\T_{\lambda}^{\mu}$ denote the set of all signed queue tableaux of shape $\lambda$ and type $\mu$, and let 
          $\T_{\lambda}$ denote the set of all signed queue tableaux of shape $\lambda$.
\end{definition}

\begin{prop}
Choose a composition $\mu\in \NN^n$. The map $\Tab$ from \cref{def:Tab} gives a bijection between
the set $\SMLQ(\mu)$ of signed multiline queues of type $\mu$ and 
the set of signed queue tableaux $\T_{\lambda}^{\mu}$ of shape $\lambda$ and type $\mu$.    
\end{prop}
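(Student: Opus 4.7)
The plan is to establish the bijection by constructing an explicit inverse $\Tab^{-1}\colon \T_{\lambda}^{\mu} \to \SMLQ(\mu)$ and verifying that both maps are well-defined. Concretely, given $T \in \T_{\lambda}^{\mu}$ with entries $\phi$, for each cell $(i,j)$ I would place a ball at row $j$ and MLQ column $|\phi(i,j)|$ of the signed multiline queue $\Tab^{-1}(T)$: the ball is regular if $j$ is a classic row, positive signed if $j$ is primed and $\phi(i,j) > 0$, and negative signed if $j$ is primed and $\phi(i,j) < 0$. The pairings are read directly off the strand structure: the ball at row $j$ of tableau-column $i$ is paired with the ball at row $j^-$ of the same tableau-column. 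Once $\Tab^{-1}$ is shown to land in $\SMLQ(\mu)$, the maps $\Tab$ and $\Tab^{-1}$ are visibly mutual inverses.

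For the well-definedness of $\Tab$, I would verify each property of a signed queue tableau in turn. The shape is $\lambda$ because a strand through a ball with absolute label $a$ contains exactly $2a$ balls (one in each of rows $1, 1', \dots, a, a'$) and the labels in row $1$ are $\mu_1, \dots, \mu_n$, so the column heights rearrange to $2\lambda$; the type is $\mu$ by reading off row $1$. The top-entries decreasing condition is a restatement of the total order used to build $\Tab(\G)$. The dichotomy between classic rows (positive entries) and primed rows (mixed sign) is immediate from the definition of signed multiline queues. The condition ``if row $r'$ contains a positive $j$, then row $r$ contains a $j$'' is the rightmost forbidden configuration of \cref{fig:forbidden_configurations_ghost}, and the inequality $|\phi(d(x))| \geq |\phi(x)|$ for $x$ in a primed row is the non-wrapping rule (a') of \cref{def:ghostMLQ}. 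For the attacking conditions of \cref{def:attacking}, part (i) encodes that no two balls occupy the same MLQ position in a given row, while part (ii) encodes the stacking rules (b), (b'), (c') of \cref{def:MLQ,def:ghostMLQ}: if two cells in adjacent tableau rows carry the same absolute entry then the corresponding MLQ balls sit in the same column at adjacent rows, and the label inequalities in those rules translate, via $\lambda_i=$ absolute label of the $i$-th strand, into the height conditions $\lambda_i \geq \lambda_{i'}$ for positive cells and $\lambda_{i'} > \lambda_i$ with $i' < i$ for negative cells.

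For the inverse direction, the column-height data of $T$ guarantees that $\Tab^{-1}(T)$ has the correct ball counts per row and bottom row $\mu$. Distinctness of MLQ positions per row and the absence of forbidden configurations in each classic and signed layer follow from the contrapositive of the attacking conditions. The main obstacle is to show that the pairings read off the tableau columns are the unique matchings characterized by the shortest-strand rule of \cref{def:MLQ,def:ghostMLQ}: each classic pairing must be the shortest cyclic path from top to bottom going down or left-to-right, and each signed pairing must be the shortest non-wrapping path. The key is that the decreasing top-entry condition at each common height, combined with the attacking conditions, forces the greedy right-to-left matching procedure implicit in those definitions to produce exactly the strands prescribed by the columns of $T$. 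I expect to carry this out by induction on the label, processing balls in decreasing label and right-to-left within each label, and checking at each step that the next available partner in the cyclic (resp.\ non-wrapping) order is the one recorded by the tableau.
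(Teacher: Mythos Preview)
Your proposal is correct and follows the same approach as the paper's (terse) proof: verify that each defining property of a signed multiline queue translates, via the footnotes in \cref{def:filling} and \cref{def:attacking}, into a defining property of a signed queue tableau, and conversely. One clarification regarding your ``main obstacle'': in \cref{def:MLQ} and \cref{def:ghostMLQ} the pairings are part of the \emph{data} of the (signed) multiline queue---the shortest-strand language only describes how a given pairing is drawn, not a greedy procedure that determines it---so $\Tab^{-1}$ simply reads the pairings off the tableau columns, and the only constraints to check are the forbidden configurations and the non-wrapping condition (a$'$), which you have already accounted for; the inductive matching argument you anticipate is unnecessary.
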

\begin{proof}
The proof is straightforward: the various properties of the definition of signed multiline queue get translated into properties of signed queue tableaux
as explained in the footnotes of \cref{def:filling} and \cref{def:attacking}.
\end{proof}

Our next goal is to translate the weight function on signed multiline queues to a weight function on signed queue tableaux.  First we need some notation.
Given a filling $\phi$ of ${\DD}_{\lambda}$, we say that a box $x$ is \emph{restricted} if the absolute values of the labels of $x$ and $d(x)$ are equal, i.e. $|\phi(d(x))|=|\phi(x)|$, and \emph{unrestricted} otherwise.
We make the convention that all boxes in row $1$ are restricted.

\begin{definition} \label{def:legarm}
Let $\lambda=(\lambda_1,\dots,\lambda_n)$ be a partition and let $\phi:{\DD}_{\lambda}\to [\pm n]$ be a signed queue tableau. 
Let $x=(i,j)$ be a box in a classic row.
We define $\leg(x)=\lambda_i - j$ to be the number of \emph{classic} boxes above $x$ in its column.
        The \emph{major index} is given by
        \[
        \maj(\phi)=\sum_{x\in \DD^{\classic}_{\lambda}\ :\ |\phi(d(x))|<\phi(x)} (\leg(x)+1).
        \]

        Given an unrestricted box $x=(i,j)$ of $\phi$, we define
        \begin{align}
                    \label{eq:arm} \arm(x)&=\#\big\{(k,j^-) \in D_{\lambda}\ :\ k>i,\ \lambda_{k}<\lambda_i \big\} \\
                &+ \#\big\{(k,j) \in D_{\lambda}\ :\ k>i,\ \lambda_{k}=\lambda_i,\mbox{and}\ (k,j) \text{ is unrestricted} \big\} \nonumber
        \end{align}
to be the number of boxes to the right of $x$ in the row below it, contained in columns shorter than its column, plus the number of unrestricted boxes to the right of and in the same row as $x$, contained in columns of the same length as $x$'s column.
\end{definition}
\begin{remark}\label{rem:legarm}
  The \emph{leg} statistic above will correspond to 
  the quantity $a-r$ in \eqref{eq:pair}.
\end{remark}
\begin{figure}[!ht]
  \centerline{\includegraphics[width=0.85\textwidth]{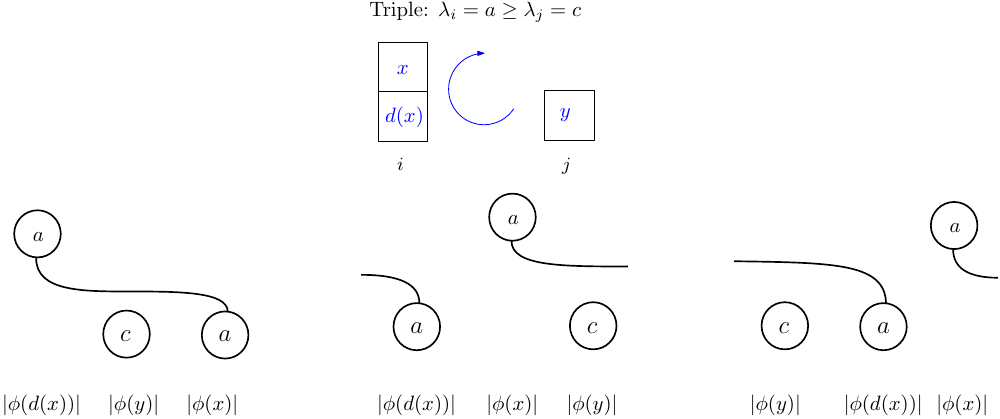}}
\centering
\caption{
A triple that forms  a coinversion, and the possibilities for the corresponding configuration in the signed multiline queue.   The arrow indicates the cyclic order of the labels. We  have $|\phi(d(x))|<|\phi(y)|<|\phi(x)|$, 
$|\phi(d(x))|<|\phi(x)|<|\phi(y)|$, and $|\phi(y)|<|\phi(d(x))|<|\phi(x)|$, respectively. Note that in the MLQ diagrams, the balls are represented with the absolute values of their labels, since these definitions do not depend on the sign.}
\label{fig:inversion}
 \end{figure}

        \begin{definition}\label{def:triple}
 A \emph{triple} is a triple of boxes $\{x, d(x), y\}$ in $\DD_{\lambda}$ where $x$ is in a classic or signed row,
  $y$ is to the right of and in the same row as $d(x)$, and either 
  \begin{enumerate}
      \item\label{item1} the column of $y$ is shorter than the column of $x$, or
      \item\label{item2} the column of $y$ has the same length as the column of $x$, and $u(y)$ (the cell just above $y$) is unrestricted.
  \end{enumerate}
  See \Cref{fig:inversion}. Notice that a triple implies that 
at the time that the balls labeled $a$ are paired, the ball labeled $c$ has not yet been paired to a ball in the row above.  Moreover
in \cref{item1} we have $c<a$, and in \cref{item2} the ball labeled $c$ is nontrivially paired.
        A {triple} is a \emph{coinversion}
        if $\phi(x)>0$, and either 
        $|\phi(x)|<|\phi(y)|<|\phi(d(x))|$, 
$|\phi(d(x))|<|\phi(x)|<|\phi(y)|$, or $|\phi(y)|<|\phi(d(x))|<|\phi(x)|$.
        {We then define $\coinv(\phi)$ to be the number of coinversions, as shown in \Cref{fig:inversion}.}  One may notice that if $(x,d(x),y)$ is a coinversion with $x\in D_\lambda^{\primed}$, then by item 4 of \cref{def:filling} we necessarily have $|\phi(x)|<|\phi(y)|<|\phi(d(x))|$. 
        
We define $\negative(\phi)$ to be the number of negative cells $x$ such that 
$|\phi(d(x))| \neq |\phi(x)|,$
 and  $\emp(\phi)$ to be the number of elements $0<a<b<c$ such that $\pm a$ appears in row $r'$, $c$ appears directly below $\pm a$ in row $r$, and $b$ does \emph{not} appear in row $r$. 
\end{definition}

\begin{definition}\label{def:weight}
Let $\lambda=(\lambda_1,\dots,\lambda_n)$ be a partition with largest part $L$, and let $\phi:{\DD}_{\lambda}\to [\pm n]$ be a signed queue tableau of shape $\lambda$. 
The \emph{weight} of $\phi$ is
\begin{equation}\label{eq:weight}
        \wt(\phi)=(-1)^{\negative(\phi)} q^{\maj(\phi)}t^{\coinv(\phi)+\emp(\phi)}\prod_{
        \substack{x\in D^{\classic}_{\lambda}\\  \text{$x$ unrestricted}}}         \frac{1-t}{1-q^{\leg(x)+1}t^{\arm(x)+1}} \prod_{\substack{x\in \DD^{\primed}_{\lambda}\\
        \text{$x$ unrestricted}}} (1-t),
\end{equation}

For a box $y\in {\DD}_{\lambda}^{r'}$ in row $r'$ of ${\DD}_{\lambda}$, 
we let 
$$\wt_{\phi}(y)=\begin{cases}
             x_{\phi(y)} & \text{ if }\phi(y)>0\\
             \frac{-q^{r-1}}{t^{n-1}} & \text{ if }\phi(y)<0.
             \end{cases}$$
We also define 
\begin{equation}
    x^\phi= \prod_{y\in D_{\lambda}^{\primed}} \wt_{\phi}(y)
    \end{equation}
to be
the Laurent monomial in $x_1,\ldots,x_n, q, t$ where the power of $x_i$ is the number of boxes in $D_{\lambda}^{\primed}$ whose entry is $i$, while the exponents of $q$ and $t$ depend on the number of negative entries in $D_{\lambda}^{\primed}$.
\end{definition}

We are now ready to state our tableaux version of \cref{thm:main}.

\begin{thm}\label{thm:tableaux}
Let $\lambda=(\lambda_1,\dots,\lambda_n)$ be a partition, and
let $\mu\in S_n(\lambda)$ be a composition. Then the interpolation ASEP polynomial $f_{\mu}^*(\bfx;q,t)$ equals the 
weight-generating function for signed queue tableaux $\T_{\lambda}^{\mu}$, that is,
\begin{equation}
    f_{\mu}^*(\bfx;q,t)= \sum_{\phi\in\QT_{\lambda}^{\mu}} \wt(\phi)x^\phi.
\end{equation}
And the interpolation Macdonald polynomial $P^*_{\lambda}(\xx; q, t)$
is equal to the weight-generating function for all signed 
queue tableaux $\QT_{\lambda}$ of shape $\lambda$, that is, 
$$P^*_{\lambda}(\xx;q,t) = \sum_{\phi\in\QT_{\lambda}} \wt(\phi)x^\phi.$$ 
\end{thm}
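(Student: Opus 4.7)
The plan is to show that the map $\Tab$ from \cref{def:Tab} is a weight-preserving bijection, that is, $\wt(\G) = \wt(\Tab(\G)) \cdot x^{\Tab(\G)}$ for every $\G \in \SMLQ(\mu)$. The bijectivity of $\Tab: \SMLQ(\mu) \to \T_{\lambda}^{\mu}$ has already been established, so the ASEP statement $f_{\mu}^* = \sum_{\phi \in \T_{\lambda}^{\mu}} \wt(\phi) x^{\phi}$ will follow from \cref{thm:main} once the weights are shown to agree. The Macdonald statement follows immediately after: since $\T_{\lambda} = \bigsqcup_{\mu \in S_n(\lambda)} \T_{\lambda}^{\mu}$, summing the ASEP identity over $\mu \in S_n(\lambda)$ and applying \cref{prop:symmetrization} gives the claim.

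The weight-matching verification breaks into several independent pieces, each of which compares a local statistic on $\G$ with a local statistic on $\phi = \Tab(\G)$. First, the ball-weight factor $\wt_{\ball}(\G)$ matches $x^{\phi}$ directly by construction: under $\Tab$, each primed-row ball in column $i$ becomes a primed-row entry $\pm i$ in $\phi$, with sign preserved, so the factors $x_i$ and $-q^{r-1}/t^{n-1}$ align exactly. Second, the nontrivial pairings in $\G$ correspond precisely to the unrestricted cells of $\phi$: a nontrivial pairing between row $r$ and row $(r-1)'$ (respectively row $r'$ and row $r$) corresponds to an unrestricted cell in $D^{\classic}_{\lambda}$ (respectively $D^{\primed}_{\lambda}$), explaining the two products in \eqref{eq:weight}. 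For such a cell $x$ corresponding to a pairing from a ball labeled $a$ in row $r$, \cref{rem:legarm} gives $\leg(x)+1 = a-r+1$, and the definition of $\arm(x)$ is designed so that $\arm(x)+1$ equals the number of free balls at the moment of pairing in the classic case.

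The main obstacle is matching the statistics $\skipped$ and $\emp$ from \cref{def:wt} with the tableau statistics $\coinv$, $\emp(\phi)$, and $\maj$. The $q$-exponent from wrapping classic pairings must reassemble into $q^{\maj(\phi)}$; this follows because a classic pairing of a ball labeled $a$ in row $r$ wraps around precisely when the column position of $d(x)$ lies weakly to the left of $x$'s column, which by the construction of $\Tab$ is exactly the condition $|\phi(d(x))| < \phi(x)$ appearing in the definition of $\maj$. For the $t$-exponent, the core identity to prove is that for a fixed signed multiline queue, the total number of skipped balls across all (classic and signed) pairings equals $\coinv(\phi)$, while the total number of empty skips in signed layers equals $\emp(\phi)$. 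This is a local/static verification: using the description of $\skipped$ given in \cref{lem:skipped}, one checks that a ball being skipped at the moment its pairing is placed corresponds exactly to a triple $\{x, d(x), y\}$ of one of the three cyclic orderings shown in \Cref{fig:inversion}, while the requirement that pairings be read in the pairing order of \cref{def:wt0} translates into the conditions on column lengths and the ``$u(y)$ unrestricted'' clause in \cref{def:triple}. One also must verify that a coinversion triple with $x \in D^{\primed}_{\lambda}$ must satisfy $\phi(x) > 0$, which is forced by item~(iv) of \cref{def:filling} combined with the cyclic inequalities, accounting for the positivity requirement in the definition of $\coinv$.

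Finally, the sign $(-1)^{\negative(\phi)}$ is supplied by the signed-layer pairing weight \eqref{eq:pair2}: every nontrivial pairing of a negative signed ball contributes a factor $-(1-t)$, and these are exactly the cells counted by $\negative(\phi)$ (each negative cell $x$ with $|\phi(d(x))| \neq |\phi(x)|$ corresponds to a nontrivial pairing of a negative ball). Assembling all of these local identifications yields $\wt(\G) = \wt(\Tab(\G)) x^{\Tab(\G)}$, which together with the bijection completes the proof. The hardest bookkeeping is in the third paragraph above, where care must be taken that the three cyclic orderings in \Cref{fig:inversion} exhaust all skipped configurations and that classic and signed layers are handled uniformly.
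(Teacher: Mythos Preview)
Your overall strategy is the same as the paper's: transport the weight from $\G$ to $\phi=\Tab(\G)$ factor by factor and then invoke \cref{thm:main}. The identifications you make for $x^\phi$, for $\maj$, for $\negative$, and for $\emp$ are all correct and essentially identical to the paper's.

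There is, however, a genuine gap in your treatment of the $t$-exponent. You claim that for a fixed $\G$, the total $\skipped$ (computed with the pairing order of \cref{def:wt0}) equals $\coinv(\phi)$, and that $\arm(x)+1$ equals $\free$ for the corresponding pairing. This is not true with the \emph{original} right-to-left order: when several balls in a row carry the same label, that order need not agree with the strand order of \cref{def:Tab}, and it is the strand order that governs which cells lie ``to the right'' in the tableau. In the earlier paper \cite{CorteelMandelshtamWilliams2022} this mismatch was handled by counting both ``Type~A quadruples'' and ``Type~B triples''; the triples-only definition in \cref{def:triple} does \emph{not} match the original $\skipped$.

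The paper fixes this by introducing a \emph{new} pairing order (\cref{def:new_order}): within a fixed label one first makes all trivial pairings, then the nontrivial ones in the strand order. One then argues (appealing to \cite[Lemma~2.1]{CorteelMandelshtamWilliams2022} for classic layers, with the analogous argument for signed layers) that the generating function $F^*_\mu$ is unchanged if one uses the primed statistics $\skipped'$, $\free'$ coming from this new order. Only after this reordering does the clean match $\coinv\leftrightarrow\skipped'$ and $\arm\leftrightarrow\free'$ of \cref{rem:skipped} hold. You need to insert this step; without it your third paragraph asserts an identity that fails on individual signed multiline queues.
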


In order to prove the theorem, 
we will actually use a different convention for the ordering of pairings in our multiline queue and hence slightly different versions of the skipped and free statistics (the empty statistic does not depend on the pairing order).

\begin{definition}\label{def:new_order}[New pairing order]
We define the following new pairing order for multiline queues: for each row $r$ (classic or primed) we read the balls in decreasing order of the absolute value of their label; within a fixed label, we start by making the trivial pairings, and then pair balls with respect to the order of their strands given in \cref{def:Tab}. 

As in \cref{def:wt0} and \cref{def:wt}, we define the statistic $\free'$ and $\skipped'$ relative to this new order: let $p$ be a pairing from row $r$ to row $r^-$. Then
 $\free'(p)$ counts the number of balls in row $r^-$ that have not yet been matched right before we place the pairing $p$. Similarly, if the pairing $p$ matches a ball labeled $a$ in row $R$ and column $j$ to a ball in row $r^-$ and column $j'$, then the statistic $\skipped'(p)$ counts the number of free balls in row $r^-$ and columns $j+1, j+2,\dots, j'-1$  (indices considered modulo $n$). 

This gives rise to a new weight $\wt_{\pair}'(p)$ defined as in \cref{eq:pair} (respectively \cref{eq:pair2}) when $p$ is in classic layer (respectively signed layer). 

This defines a weight $\wt'(Q^\pm)$ for any signed multiline queues $Q^\pm$.
\end{definition}

It turns out that the weighted generating function of signed multiline queues is invariant under changes in the order in which non-trivial pairings within the same label are made.
This was proven for classical layers in \cite[Lemma 2.1]{CorteelMandelshtamWilliams2022} by constructing an involution that switches the order of two non-trivial pairings of the same label. The same argument applies to signed layers\footnote{One starts by noticing that in signed layers, the statistic $\skipped$ does not change if we first make the trivial pairings, and then we make the other pairings from right to left.}. We leave the details to the reader.

Since, in each layer, the orders of \cref{def:wt} and \cref{def:new_order}  differ only on non-trivial pairings of the same label, we get the following lemma.
\begin{lem}
    For any composition $\mu$, we have
    $$F^*_\mu(\bfx;q,t)=\sum_{Q\in\SMLQ_\mu}\wt(Q^\pm).$$
\end{lem}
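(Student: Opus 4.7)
The plan is to adapt the involution argument from \cite[Lemma~2.1]{CorteelMandelshtamWilliams2022} to signed layers. First note that the two orderings (from \cref{def:wt} and \cref{def:new_order}) agree in every respect except in the internal ordering of the nontrivial pairings within a single absolute value $a$, in a single layer. Indeed, both orderings process layers and balls in the same sequence (from top to bottom, decreasing absolute value of label, trivial pairings not contributing), and within a fixed label they process the trivial pairings identically (these can be read off from the geometry alone). Hence it suffices to prove that, within any single layer and for any fixed absolute value $a$, the product $\prod_p \wt_{\pair}'(p)$ taken over nontrivial $a$-pairings is independent of the order in which these pairings are placed. Since the symmetric group is generated by adjacent transpositions, we only need to verify this invariance under the swap of two consecutive $a$-pairings $p_1, p_2$ in the ordering.

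For classical layers this is exactly the content of \cite[Lemma~2.1]{CorteelMandelshtamWilliams2022}. For signed layers the analysis is actually simpler because the pairing weight from \eqref{eq:pair2} has the form $\pm (1-t)\, t^{\skipped(p) + \emp(p)}$; the sign and the $\emp$-statistic depend only on the geometry of the balls in the layer, not on the order of pairings, and so are unaffected by the swap. Thus, under a swap of two consecutive same-label pairings $p_1,p_2$ in a signed layer, the only quantity that could change is the sum $\skipped(p_1) + \skipped(p_2)$ (and these are the only pairings whose skipped count can be affected, since the swap changes nothing about what balls were free before $p_1$ or after $p_2$).

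The heart of the proof is therefore a short case analysis showing that $\skipped(p_1) + \skipped(p_2)$ is preserved under the swap. Writing $p_i$ as connecting column $j_i$ in row $r'$ to column $k_i$ in row $r$ with $j_i < k_i$ (recall signed pairings do not wrap), one considers the three possible configurations of the intervals $[j_1,k_1]$ and $[j_2,k_2]$: disjoint, nested, or crossing. In each case one tracks whether the bottom ball at $k_1$ (resp.\ $k_2$) is counted as skipped by $p_2$ (resp.\ $p_1$), and checks that exchanging the order exchanges one unit between $\skipped(p_1)$ and $\skipped(p_2)$ (or leaves both unchanged). Since the ball weight $\wt_{\ball}$ and the contributions from classical layers are independent of the signed-layer pairing order, combining this with \cite[Lemma~2.1]{CorteelMandelshtamWilliams2022} for the classical layers yields
\[
\sum_{Q^\pm \in \SMLQ(\mu)} \wt'(Q^\pm) \;=\; \sum_{Q^\pm \in \SMLQ(\mu)} \wt(Q^\pm) \;=\; F_{\mu}^*(\bfx;q,t).
\]

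The main obstacle, and the only real work, is the case analysis for signed layers; but because the signed-layer weight contains no ``free'' denominator (unlike the classical case \eqref{eq:pair}, where the denominator $1 - q^{a-r+1} t^{\free(p)}$ must also be reconciled), the verification is strictly easier than in the classical setting. One just has to be careful about the interaction between positive and negative top balls, and to verify that the swap indeed does not change the $\emp$ statistic of either $p_1$ or $p_2$ (this follows from $\emp(p)$ being defined purely in terms of which positions in row $r$ below the span of $p$ are empty in the underlying ball system). This completes the proposed proof.
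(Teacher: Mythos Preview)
Your overall strategy is right—the two pairing orders differ only within a fixed absolute value on a fixed layer, the classical layers are handled by \cite[Lemma~2.1]{CorteelMandelshtamWilliams2022}, and for signed layers only the statistic $\skipped$ is sensitive to the ordering—but the key claim in your case analysis is false. It is \emph{not} true that $\skipped(p_1)+\skipped(p_2)$ is preserved for a fixed signed multiline queue when you swap the order of two consecutive nontrivial $a$-pairings. Take the nested configuration $j_1<j_2<k_2<k_1$ (which is perfectly legal in a signed layer: e.g.\ negative balls $-a$ at columns $1,2$ in row $r'$, balls labeled $a$ at columns $3,4$ in row $r$, with $p_1:1\to 4$ and $p_2:2\to 3$). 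If $p_1$ is placed first, the ball at $k_2$ is free and lies in $(j_1,k_1)$, so it is skipped by $p_1$; nothing compensates in $\skipped(p_2)$ since $k_1\notin(j_2,k_2)$. If $p_2$ is placed first, $k_2$ is no longer free when $p_1$ is placed, and again nothing compensates. The two sums differ by~$1$. (A parallel computation shows the crossing case $j_1<j_2<k_1<k_2$ also changes by~$1$, in the opposite direction.)

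The point is that the invariance is a statement about the \emph{generating function}, not about individual queues. The argument the paper has in mind (and which \cite[Lemma~2.1]{CorteelMandelshtamWilliams2022} carries out for classical layers) is an involution on the set of signed multiline queues: when the order swap changes the weight, one simultaneously swaps the bottom targets $k_1\leftrightarrow k_2$, interchanging the nested and crossing configurations. One then checks that this involution, combined with the order swap, preserves $\skipped(p_1)+\skipped(p_2)+\emp(p_1)+\emp(p_2)$ and leaves the signs and the factor $(1-t)^2$ untouched (since the top balls and the nontriviality of both pairings are unchanged). The disjoint case is fixed by the identity. Your proposed argument omits this queue-level involution and so does not close.
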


\begin{lem} \label{rem:skipped}
 Under the map $\Tab$, the coinversion statistic  corresponds to the \emph{$\skipped'$} statistic 
 and the  $\arm$ statistic corresponds to the $\free'$ statistic
 for  signed multiline queues.
 \end{lem}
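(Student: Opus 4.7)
The plan is to verify both correspondences directly from the description of the new pairing order in \cref{def:new_order}, which was designed precisely so that the order in which nontrivial pairings are placed agrees with the total order on strands used in the definition of $\Tab$ (\cref{def:Tab}). First, I would fix a signed multiline queue $Q^\pm$ with tableau $\phi:=\Tab(Q^\pm)$ and set up the dictionary between nontrivial pairings in $Q^\pm$ and unrestricted cells of $\phi$: each nontrivial pairing $p$ lying in strand $S_i$ and joining the ball in MLQ row $j$ to its partner in row $j^-$ corresponds to the unrestricted cell $x=(i,j)$, with $\phi(x)$ recording the MLQ column of the upper ball and $\phi(d(x))$ that of the lower ball.

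For the arm/$\free'$ correspondence, I would argue that under the new pairing order, at the moment $p$ (with label $a=\lambda_i$) is about to be placed, the balls of MLQ row $j^-$ still free consist of exactly three classes: (i) balls belonging to strands of label strictly less than $a$, which are processed only in later rounds; (ii) balls belonging to same-label ($a$) strands whose pairing between rows $j$ and $j^-$ is nontrivial and whose strand comes later in the strand order of \cref{def:Tab}, i.e.\ sits in a tableau column $k>i$; and (iii) the ball about to be matched by $p$ itself. Classes (i) and (ii) are precisely the two summands enumerated by $\arm(x)$ in \cref{def:legarm}, while (iii) accounts for the $+1$ shift; hence $\free'(p)=\arm(x)+1$, which is the correct match given the factors $1-q^{\leg(x)+1}t^{\arm(x)+1}$ versus $1-q^{a-r+1}t^{\free(p)}$ appearing in the two weight formulas.

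For the coinv/$\skipped'$ correspondence, I would read the three cyclic orderings in \cref{def:triple} as the three possible geometric positions of $|\phi(y)|$ along the rightward arc on the cylinder from $|\phi(x)|$ to $|\phi(d(x))|$: the case $|\phi(x)|<|\phi(y)|<|\phi(d(x))|$ corresponds to a pairing that does not wrap around, while the other two cases correspond to classic-layer pairings that do wrap around the cylinder. In either case, the cyclic ordering is equivalent to $y$'s MLQ column lying in the skipped range of $p$. Combined with the triple shape condition (column of $y$ shorter, or same length with unrestricted cell above), which by the arm analysis is exactly the condition for $y$ to represent a free ball in row $j^-$, this shows that coinversions $\{x,d(x),y\}$ with upper cell $x$ are in bijection with skipped balls of the pairing associated with $x$. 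For cells $x$ in primed rows, item 4 of \cref{def:filling} forces $|\phi(x)|\le |\phi(d(x))|$ and hence only the first cyclic ordering can occur, matching the remark after \cref{def:triple}.

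The main obstacle is the careful bookkeeping across the classic/signed dichotomy, especially checking that the cylindrical geometry of classic layers (with potentially wrapping pairings contributing the $q^{a-r+1}$ factor in \eqref{eq:pair}) is faithfully encoded by the three cyclic orderings of \cref{def:triple}, and that the condition ``$y$ to the right of $d(x)$ in the tableau'' together with the column-length conditions faithfully translates the ``free ball in row $j^-$'' notion from the MLQ. Once this dictionary is laid out, the verification reduces to a routine case-by-case check against \cref{def:wt}, \cref{def:new_order}, \cref{def:legarm}, and \cref{def:triple}, together with \cref{lem:skipped} which gives the static reformulation of $\skipped'$.
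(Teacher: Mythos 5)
Your proof is correct and follows essentially the same route as the paper's: a direct verification, via the dictionary between nontrivial pairings and unrestricted cells, that under the pairing order of \cref{def:new_order} the triple/shape conditions of \cref{def:triple} match the ``free ball'' conditions and the three cyclic orderings match the (possibly wrapping) skipped ranges. Your write-up is in fact more detailed than the paper's own terse argument, notably in spelling out the three classes of free balls behind $\free'(p)=\arm(x)+1$ and in treating the primed-row case separately.
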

 \begin{proof}
 Consider a pairing between balls labeled $a$ which skips over a ball which will (eventually) be labeled by $c$,
 and let $x$, $d(x)$, and $y$ denote the cells of the tableau which correspond to the two paired balls labeled $a$ and the skipped ball labeled $c$.  Then $c \leq a$.  Since we use the pairing order from \cref{def:new_order}, the string of linked balls containing $c$ gives rise to a column $j$ in the corresponding tableaux which is to the right of the column $i$ containing $x$ and $d(x)$.
 Moreover
 $\lambda_i = a \geq c = \lambda_j$, and $y$ is not part of a trivial pairing, so $\{x, d(x), y\}$  form a triple.  The condition that the ball labeled $c$ is skipped by the pair exactly corresponds to the cyclic order given in the definition of coinversion, see \Cref{fig:inversion}. \end{proof}

\begin{remark}
 In \cite{CorteelMandelshtamWilliams2022}, the coinversion statistic on tableaux was computed by counting both ``Type A quadruples'' and ``Type B triples.''  However, by working with the pairing order from \cref{def:new_order}, we can work with (Type B) triples only.  We thank Olya Mandelshtam for explaining this to us; see also \cite{MandelshtamTBD}.   
\end{remark}

\begin{proof}[Proof of \cref{thm:tableaux}]
It is not hard to see that under the bijection $\Tab$, the statistics on signed multiline queues translate into corresponding statistics on signed queue tableaux, see
\cref{rem:legarm} and \cref{rem:skipped}.  Moreover, the empty statistic from \eqref{eq:pair2} corresponds to the empty statistic $\emp(\phi)$ on tableaux, while the factors of $-1$ in \eqref{eq:pair2} correspond to the statistic $\negative(\phi)$ on tableaux.  The product in 
\eqref{eq:weight} corresponds to a product over all nontrivial pairings.
\end{proof}

\subsection{The tableaux formula for the integral form}\label{ssec:integral_form}
In this section we will give a tableau formula for the integral normalization of the interpolation symmetric Macdonald polynomials and the interpolation ASEP polynomials. We start with some definitions.

Fix a partition $\lambda$ and a filling $\phi$ of its doubled diagram $D_\lambda$. Fix a signed cell $x\in D_\lambda^{\classic}$. We recall that $\arm(x)$ was defined in \cref{def:legarm}. We now define this statistic for signed cells. If $x=(i,j')$ is a signed cell, we will denote $u(x):=(i,j+1)$  the classic cell on top of $x$. We will use the convention that if this cell is not in the diagram $D_\lambda$, then it is restricted. 

As for classic cells, the $\leg$ of a signed cell will be defined as the number of classic boxes above $x$ in its column. In particular, for $x\in D_\lambda^{\primed}$ we have $\leg(x)=\leg(u(x))+1$.

We now extend the definition of $\arm$ (see \eqref{eq:arm}) to signed cells $x\in \DD_\lambda^{\primed}$,  in such a way that if $u(x)\in \DD_\lambda$, $u(x)$ is unrestricted, and $p$ is the pairing connecting the balls corresponding to the cells $x$ and $u(x)$ under the bijection $\Tab$, then $\arm(x)=\free(p)$. 
Even though our previous tableaux formula used the arm and leg statistics only for classic cells, our next formula can be written more cleanly if we shift these statistics to signed cells.

\begin{definition}
 Let $x\in \DD_\lambda^{\primed}$ be a signed cell.

If $u(x)$ is unrestricted,  we define 
                $\arm(x):=\arm(u(x))$, and if $u(x)$ is restricted, we define
\begin{align*}
\arm(x):&=\#\big\{(k,j) \in D_{\lambda}\ :\  k>i\big\}\\
                &+ \#\big\{(k,j) \in D_{\lambda}\ :\ k<i,\ \lambda_{k}=\lambda_i,\mbox{and}\ u(k,j) \text{ is unrestricted}\big\}.
\end{align*}
\end{definition}
\Cref{fig:Leg_arm} shows the statistics $\leg$ and $\arm$ for the signed multiline tableau of \Cref{fig:tableau_example}.
\begin{figure}[t]
    \centering
    \includegraphics[height=1.4in]{GhostTableaux2.pdf}\hspace{0.5cm}
    \includegraphics[height=1.4in]{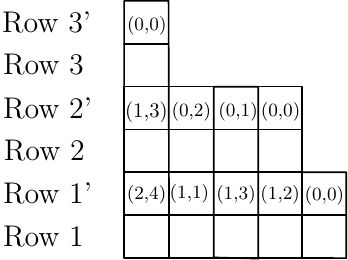}
    \caption{On the left: the tableau of \Cref{fig:tableau_example}. On the right: the pair $(\leg,\arm)$ for each  signed cell of the tableau.
    }
    \label{fig:Leg_arm}
\end{figure}

  Since $\leg(u(x))+1=\leg(x),$
the weight $\wt(\phi)$ (see~\cref{def:weight}), can be written as:
\begin{equation*}
        \wt(\phi)=(-1)^{\negative(\phi)} q^{\maj(\phi)}t^{\coinv(\phi)+\emp(\phi)}
        \prod_{\substack{x\in D^{\primed}_{\lambda}\\
        \text{$u(x)$ unrestricted}}}
        \frac{1-t}{1-q^{\leg(x)}t^{\arm(x)+1}}
        \prod_{\substack{x\in \DD^{\primed}_{\lambda}\\
        \text{$x$ unrestricted}}} (1-t).
\end{equation*}

We define $$\hook_\lambda:=\prod_{x\in D_\lambda^{\primed}}(1-q^{\leg(x)}t^{\arm(x)+1}).$$
One can show that unlike the definition of $\arm$, the definition of $\hook_\lambda$ is independent of the filling $\phi$, and corresponds to the usual hook product; see \cite[Section 5]{Knop1997b}. 

\begin{definition}
We define the \emph{integral (form) interpolation Macdonald polynomial} 
and the \emph{integral (form) interpolation ASEP polynomial} to be
$$J_\lambda^*:=\hook_\lambda P^*_\lambda \hspace{1cm} \text{ and }\hspace{1cm}
\hook_\lambda f_{\mu}^*.$$
\end{definition}

We define the \emph{integral weight} to be $\wt^J(\phi)
        :=\hook_\lambda \wt(\phi)$, which equals
\begin{equation*}
        (-1)^{\negative(\phi)} q^{\maj(\phi)}t^{\coinv(\phi)+\emp(\phi)}\hspace{-0.7cm}\prod_{\substack{x\in D^{\primed}_{\lambda}\\ \text{$u(x)$ unrestricted}}}\hspace{-0.3cm}
        (1-t) 
        \prod_{\substack{x\in \DD^{\primed}_{\lambda}\\ \text{$x$ unrestricted}}}\hspace{-0.25cm} (1-t)
        \prod_{\substack{x\in D^{\primed}_{\lambda}\\ \text{$u(x)$ restricted}}}\hspace{-0.4cm}
        (1-q^{\leg(x)}t^{\arm(x)+1}).
\end{equation*}

We then get the following corollary of \cref{thm:tableaux}.
\begin{cor}\label{cor:integral_form}
    Let $\lambda=(\lambda_1,\dots,\lambda_n)$ be a partition,
and let $\mu\in S_n(\lambda)$ be a composition. 
Then the integral form interpolation ASEP polynomial equals the generating function for signed queue tableaux  $\QT_{\lambda}^{\mu}$ counted with integral weights, that is,
\begin{equation}
    \hook_\lambda f_{\mu}^*= \sum_{\phi\in\QT_{\lambda}^{\mu}} \wt^J(\phi)x^\phi.
\end{equation}
And the integral interpolation Macdonald polynomial $J^*_{\lambda}(\xx; q, t)$
equals  the generating function for all signed 
queue tableaux $\QT_{\lambda}$ of shape $\lambda$ counted with integral weights, that is, 
$$J^*_{\lambda}(\xx;q,t) = \sum_{\phi\in\QT_{\lambda}} \wt^J(\phi)x^\phi.$$ 
\end{cor}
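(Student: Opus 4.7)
\medskip

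The plan is to derive \cref{cor:integral_form} as a direct consequence of \cref{thm:tableaux} by multiplying through by $\hook_\lambda$. Since by definition $J^*_\lambda = \hook_\lambda \cdot P^*_\lambda$ and the integral interpolation ASEP polynomial is $\hook_\lambda \cdot f^*_\mu$, \cref{thm:tableaux} immediately gives
\[
\hook_\lambda f_\mu^* = \sum_{\phi\in \QT_\lambda^\mu}\hook_\lambda\, \wt(\phi)\, x^\phi
\qquad\text{and}\qquad
J^*_\lambda = \sum_{\phi\in \QT_\lambda} \hook_\lambda\, \wt(\phi)\, x^\phi.
\]
So the entire content of the corollary is the identity $\hook_\lambda \wt(\phi) = \wt^J(\phi)$ for every filling $\phi$, together with the fact that $\hook_\lambda$ depends only on $\lambda$ and not on the choice of filling.

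The first step is to verify that the rewriting of $\wt(\phi)$ displayed just before the corollary is correct, \emph{i.e.}\ that
\[
\prod_{\substack{x\in D^{\classic}_{\lambda}\\ x \text{ unrestricted}}} \frac{1-t}{1-q^{\leg(x)+1}t^{\arm(x)+1}}
\;=\;
\prod_{\substack{x\in D^{\primed}_{\lambda}\\ u(x) \text{ unrestricted}}} \frac{1-t}{1-q^{\leg(x)}t^{\arm(x)+1}}.
\]
For this I would use the bijection $x\mapsto u(x)$ between classic unrestricted cells in $\DD_\lambda$ and signed cells $x\in \DD_\lambda^{\primed}$ such that $u(x)\in \DD_\lambda$ is unrestricted; under this bijection the leg shifts by $1$ (since $\leg(x)=\leg(u(x))+1$ for $x$ signed) and the arm statistic is \emph{defined} so that it matches. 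One must also check that classic cells $x\in D_\lambda^{\classic}$ in the top row (where $u(x)\notin \DD_\lambda$) are always restricted by convention, so they contribute no factor to the product.

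The second step, and the one that carries the real content, is to verify that
\[
\hook_\lambda := \prod_{x\in D_\lambda^{\primed}}\bigl(1-q^{\leg(x)}t^{\arm(x)+1}\bigr)
\]
does not depend on the filling $\phi$ (although $\arm(x)$ does a priori), and agrees with the usual hook product appearing in Knop's integral form. The key observation is that for restricted cells the statistics $(\leg, \arm)$ reduce to purely shape-theoretic quantities (the column-length and the geometric arm of $\lambda$), while for unrestricted cells the pairs $\{(\leg(x),\arm(x)) : x\in \DD_\lambda^{\primed},\ u(x) \text{ unrestricted}\}$ are a reindexing of $\{(\leg(x)+1,\arm(x)+1) : x\in \DD_\lambda^{\classic}\text{ unrestricted}\}$ via $x\mapsto u(x)$. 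Combined with the matching of restricted cells, this gives a bijection of multisets of $(\leg,\arm)$-values with a filling-independent target, namely Macdonald's ordinary hook data for $\lambda$. I would cite \cite[Section 5]{Knop1997b} for the identification with the classical hook product.

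Once these two identities are established, multiplying the right-hand sides of \cref{thm:tableaux} by $\hook_\lambda$ and combining the resulting factors yields exactly the expression $\wt^J(\phi)$ stated above the corollary, completing the proof. The main obstacle is the second step: carefully tracking the pairing between the arm statistic of a signed cell (which was defined in a case-by-case manner depending on whether $u(x)$ is restricted) and the shape-theoretic arm of $\lambda$, and checking that summing over restricted and unrestricted cells recovers the standard hook product independently of $\phi$.
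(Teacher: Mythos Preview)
Your approach is correct and matches the paper's: the corollary is stated as an immediate consequence of \cref{thm:tableaux} once the rewriting of $\wt(\phi)$ in terms of primed cells and the filling-independence of $\hook_\lambda$ are in place, and the paper handles the latter exactly as you suggest, by citing \cite[Section 5]{Knop1997b} rather than proving it in-line. One small slip: in your Step~1 the boundary case to check concerns \emph{primed} cells $x$ at the top of their column (where $u(x)\notin \DD_\lambda$ and is declared restricted by convention), not classic cells; and in Step~2 note that $\arm(x)$ for a primed cell with $u(x)$ restricted still depends on the filling through the second summand, so the filling-independence of $\hook_\lambda$ really does require a multiset-matching argument or the citation.
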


We deduce from these combinatorial formulas the following integrality results.
\begin{thm}\label{thm:integrality}
Fix a partition $\lambda\in \YY_n$.
    Consider the expansions of $J_\lambda^*$ in the monomial basis
    $J^*_\lambda=\sum_{\nu\in \YY_n:|\nu|\leq |\lambda|} c_{\lambda,\nu}\ m_\nu$. Then $t^{(n-1)(|\lambda|-|\nu|)}c_{\lambda,\nu}\in\ZZ[q,t].$
    Similarly, let $\mu\in S_n(\lambda)$ be a permutation of $\lambda$, and consider the expansion
    $\hook_\lambda \ f^*_\mu=\sum_{\nu\in \NN_n:|\nu|\leq |\mu|} d_{\mu,\nu} x^\nu$. Then $t^{(n-1)(|\mu|-|\nu|)}d_{\mu,\nu}\in\ZZ[q,t].$
\end{thm}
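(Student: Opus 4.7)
The plan is to read both integrality statements directly off the tableaux formula of \cref{cor:integral_form}. The key observation is that the integral weight $\wt^J(\phi)$ already lies in $\ZZ[q,t]$, so every power of $t$ appearing in a denominator must come from the Laurent monomial $x^\phi$, and more specifically from the negative cells in the primed rows of $\DD_\lambda$.

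First I would verify that $\wt^J(\phi) \in \ZZ[q,t]$ for every $\phi \in \QT_\lambda$. Inspecting the displayed formula for $\wt^J(\phi)$, each factor $(-1)^{\negative(\phi)}$, $q^{\maj(\phi)}$, $t^{\coinv(\phi)+\emp(\phi)}$, $(1-t)$, and $(1-q^{\leg(x)}t^{\arm(x)+1})$ has integer coefficients and nonnegative exponents, so the product does too. Next I would count negative cells. Fix $\phi \in \QT_\lambda^\mu$ and suppose the $x$-monomial of $x^\phi$ is $x^\nu$. By \cref{def:weight}, each positive cell $y \in \DD_\lambda^{\primed}$ contributes a single factor $x_{\phi(y)}$, so the number of positive cells in $\DD_\lambda^{\primed}$ equals $|\nu|$. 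Since $|\DD_\lambda^{\primed}| = |\lambda|$, the number of negative cells is $|\lambda|-|\nu|$. Each such cell contributes a factor $-q^{r-1}/t^{n-1}$, so the only $t$-denominator in $x^\phi$ is exactly $t^{(n-1)(|\lambda|-|\nu|)}$. Multiplying $\wt^J(\phi) x^\phi$ by $t^{(n-1)(|\lambda|-|\nu|)}$ therefore yields an element of $\ZZ[q,t][x_1^{\pm 1},\dots,x_n^{\pm 1}]$ whose $x$-part is the monomial $x^\nu$ times an element of $\ZZ[q,t]$.

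To conclude the second assertion of \cref{thm:integrality}, I would use \cref{cor:integral_form} to write
\[
d_{\mu,\nu} \;=\; \sum_{\substack{\phi \in \QT_\lambda^\mu \\ x^\phi \text{ has } x\text{-part } x^\nu}} \frac{\wt^J(\phi) \, x^\phi}{x^\nu},
\]
where each summand, when multiplied by $t^{(n-1)(|\lambda|-|\nu|)}$, lies in $\ZZ[q,t]$ by the two steps above. Hence $t^{(n-1)(|\mu|-|\nu|)} d_{\mu,\nu} \in \ZZ[q,t]$, using $|\mu|=|\lambda|$. For the first assertion, I would apply \cref{prop:symmetrization} to get $J^*_\lambda = \sum_{\mu \in S_n(\lambda)} \hook_\lambda f^*_\mu$. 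Since $J^*_\lambda$ is symmetric, the coefficient $c_{\lambda,\nu}$ of $m_\nu$ equals the coefficient of $x^\nu$ (with $\nu$ arranged as a partition), and this coefficient is $\sum_{\mu \in S_n(\lambda)} d_{\mu,\nu}$; the integrality bound for $c_{\lambda,\nu}$ then follows from the bound for each $d_{\mu,\nu}$.

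There is no serious obstacle: the integrality is essentially visible once one isolates the source of denominators in $x^\phi$. The only point requiring care is the bookkeeping that the $x$-degree of a contributing monomial equals the number of positive primed cells (so that the complementary count $|\lambda|-|\nu|$ of negative cells exactly matches the required power of $t$); this is why the shift by $(n-1)(|\lambda|-|\nu|)$ is the sharp one coming from the formula.
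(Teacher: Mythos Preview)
Your proposal is correct and follows essentially the same approach as the paper: both read the integrality directly off \cref{cor:integral_form} by observing that $\wt^J(\phi)\in\ZZ[q,t]$, that the primed diagram has $|\lambda|$ cells of which exactly $|\nu|$ are positive when the $x$-monomial is $x^\nu$, and that the remaining $|\lambda|-|\nu|$ negative cells each contribute a denominator $t^{n-1}$. Your write-up is slightly more explicit in separating the two assertions and invoking \cref{prop:symmetrization}, but the argument is the same.
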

The first part of this theorem was obtained in \cite[Corollary 5.5]{Knop1997b} (see also~\cite[Theorem 5.3]{Sahi1996}). The second part is however new.
\begin{proof}
    In the combinatorial formulas given in \cref{cor:integral_form}, the weights are polynomials in the variables $x_i$ with coefficients in $\ZZ[q,t]$, except the weights assigned for negative boxes $y\in {D}_\lambda^{r'}$ for which $\wt_\phi(y)=\frac{-q^{r-1}}{t^{n-1}}$. Notice that the total number of boxes in ${D}_\lambda^{\primed}$ corresponds to $|\lambda|$, and that extracting a monomial $m_\nu$ in $J_\lambda^*$ corresponds to considering tableaux with $|\nu|$ positive boxes. As a consequence, $|\lambda|-|\nu|$ is the number of negative boxes in such a tableau, hence, by multiplying by $t^{(n-1)(|\lambda|-|\nu|)}c_{\lambda,\nu}$ we compensate all the denominators. The same reasoning applies to $\hook_\lambda f^*_\mu$. 
\end{proof}

\section{Application: factorization  of interpolation Macdonald polynomials}\label{sec:factorization}

Fix $n\geq 1$. Let ${\llbracket n \rrbracket \choose k}$ denote the $k$-element subsets of $\llbracket n \rrbracket$. For any $k\geq 0$, we define
$$e^*_{k}(x_1,\dots,x_n;t):=\sum_{ S\in {\llbracket n \rrbracket \choose k}} 
\prod_{i\in S}\left(x_i-\frac{t^{\#S^c\cap \llbracket i-1\rrbracket}}{t^{n-1}}\right),$$
where $S^c$ denotes the complement of $S$ in $\llbracket n\rrbracket$.
The top homogeneous part of $e_k^*$ is the $k$-th elementary symmetric function $e_k$. Even though it is not completely clear from the definition, the functions $e^*_k$ are symmetric (see \cref{eq:P_column} below).

The purpose of this section is to prove the following factorization formula for interpolation nonsymmetric Macdonald polynomials specialized at $q=1$, after a ``partial symmetrization''.
\begin{thm}\label{thm:factorization}
Let $\lambda$ be a partition with $\lambda_1>0$. For any subset 
$S \in {\llbracket n\rrbracket \choose \ell(\lambda)}$, we have
\begin{equation}\label{eq:partial_symmetrization}
  \sum_{\mu\in S_n(\lambda):\Supp(\mu)=S} f^*_\mu(x_1,\dots,x_n;1,t)=\prod_{i\in S}\left(x_i-\frac{t^{\#S^c\cap \llbracket i-1\rrbracket}}{t^{n-1}}\right)\prod_{2\leq i\leq \lambda_1}e^*_{\lambda'_i}(x_1,\dots,x_n;t).
\end{equation}
As a consequence, we have that for any partition $\lambda$
    \begin{equation}\label{eq:factorization}
      P^*_\lambda(x_1,\dots,x_n;1,t)=\prod_{1\leq i\leq \lambda_1}P^*_{\lambda'_i}(x_1,\dots,x_n;1,t)=\prod_{1\leq i\leq \lambda_1}e^*_{\lambda'_i}(x_1,\dots,x_n;t),
    \end{equation}
    where $\lambda'$ is the partition conjugate to $\lambda$.
\end{thm}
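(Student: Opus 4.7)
The factorization $(2)$ will be derived as an immediate corollary of $(1)$: by Proposition~\ref{prop:symmetrization}, $P^*_\lambda = \sum_{\mu \in S_n(\lambda)} f^*_\mu$; grouping this sum by $S := \Supp(\mu)$ (necessarily of size $\ell(\lambda) = \lambda'_1$), we pull out the $S$-independent factor $\prod_{i=2}^{\lambda_1} e^*_{\lambda'_i}(x;t)$ supplied by $(1)$, and what remains is exactly $\sum_{S \in \binom{\llbracket n\rrbracket}{\lambda'_1}} \prod_{i \in S}(x_i - t^{\#S^c \cap \llbracket i-1\rrbracket}/t^{n-1}) = e^*_{\lambda'_1}(x;t)$ by the very definition of $e^*$.

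To prove $(1)$, set $G_S := \sum_{\mu \in S_n(\lambda):\, \Supp(\mu) = S} f^*_\mu(x;1,t)$ and $H_S := A_S \cdot B$ with $A_S := \prod_{i \in S}(x_i - c_i(S))$, $c_i(S) := t^{\#S^c \cap \llbracket i-1\rrbracket - n + 1}$, and $B := \prod_{i=2}^{\lambda_1} e^*_{\lambda'_i}(x;t)$. The main mechanism is Hecke transport: if $j \in S$ and $j+1 \notin S$, every $\mu$ in the sum defining $G_S$ satisfies $\mu_j > 0 = \mu_{j+1}$, so Proposition~\ref{prop:T_i-f_star}(\ref{item 1}) gives $T_j f^*_\mu = f^*_{s_j\mu}$, and summing yields $T_j G_S = G_{s_j \cdot S}$ where $s_j \cdot S := (S \setminus \{j\}) \cup \{j+1\}$. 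On the right-hand side, provided $B$ is symmetric, the identity $T_j(fg) = g\, T_j(f)$ (for $g$ symmetric in $x_j, x_{j+1}$) gives $T_j H_S = B \cdot T_j A_S$. Since $j+1 \notin S$, the factor $\prod_{i \in S,\, i \neq j}(x_i - c_i(S))$ involves neither $x_j$ nor $x_{j+1}$, whence
\[
T_j A_S = \Big(\prod_{i \in S,\, i \neq j}(x_i - c_i(S))\Big) \cdot T_j(x_j - c_j(S)) = \Big(\prod_{i \in S,\, i \neq j}(x_i - c_i(S))\Big) \cdot (x_{j+1} - t\, c_j(S))
\]
using $T_j(x_j) = x_{j+1}$ and $T_j(1) = t$. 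A direct count confirms $t\, c_j(S) = c_{j+1}(s_j\cdot S)$ and $c_i(S) = c_i(s_j \cdot S)$ for $i \in S \setminus \{j\}$, so $T_j A_S = A_{s_j \cdot S}$ and hence $T_j H_S = H_{s_j \cdot S}$. It therefore suffices to prove $(1)$ in the single base case $S = \{1, \ldots, \ell(\lambda)\}$.

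The packed case is established by induction on $\lambda_1$. When $\lambda_1 = 1$, so $\lambda = (1^\ell)$, the unique packed $\mu$ is $(1^\ell, 0^{n-\ell})$, and Theorem~\ref{thm:recurrence} reduces immediately to $f^*_\mu = \prod_{i=1}^\ell(x_i - t^{-n+1})$ (the only admissible $\nu$ yields $\nu^- = (0,\ldots,0)$), matching $H_{\{1,\ldots,\ell\}}$ since $B = 1$ trivially. Hecke transport is then unconditional and delivers $(1)$ for $\lambda = (1^\ell)$ at every $S$; summing over $S$ gives $(2)$ for $(1^k)$, so in particular $e^*_k = P^*_{(1^k)}(x;1,t)$ is symmetric, justifying the symmetry of $B$ used above. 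For $\lambda_1 \ge 2$, apply Theorem~\ref{thm:recurrence} at $q = 1$ to each packed $\mu \sim \lambda$ and sum:
\[
G_{\{1, \ldots, \ell\}}(x;1,t) = \prod_{i=1}^\ell (x_i - t^{-n+1}) \sum_\nu \Big(\sum_{\mu\ \text{packed},\, \mu \sim \lambda} a^\nu_\mu(1,t)\Big) f^*_{\nu^-}(x;1,t),
\]
where $\nu^-$ ranges over compositions whose nonzero entries form a permutation of the partition $\hat\lambda$ obtained from $\lambda$ by removing the $1$'s and decreasing the remaining parts by $1$. Since $\hat\lambda_1 = \lambda_1 - 1$ and $\hat\lambda'_j = \lambda'_{j+1}$, the inductive hypothesis gives $P^*_{\hat\lambda}(x;1,t) = \prod_{j=2}^{\lambda_1} e^*_{\lambda'_j}(x;t)$. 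Granting the coefficient identity $\sum_{\mu\ \text{packed},\, \mu \sim \lambda} a^\nu_\mu(1, t) = 1$ for every admissible $\nu$, the displayed sum collapses to $\prod_{i=1}^\ell(x_i - t^{-n+1}) \cdot P^*_{\hat\lambda}(x;1,t) = H_{\{1,\ldots,\ell\}}$ by Proposition~\ref{prop:symmetrization} applied to $\hat\lambda$, closing the induction.

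The main obstacle is therefore the coefficient identity $\sum_{\mu\ \text{packed},\, \mu \sim \lambda} a^\nu_\mu(1,t) = 1$. By Lemma~\ref{lem:F_decomposition1}, this is equivalent to the purely homogeneous statement $\sum_{\mu\ \text{packed},\, \mu \sim \lambda} f_\mu(x;1,t) = x_1 \cdots x_{\ell(\lambda)} \cdot P_{\hat\lambda}(x;1,t)$, which I would prove by direct analysis of the generalized two-line queues of Definition~\ref{def:two-row}: at $q = 1$ the classical pairing weight of~\eqref{eq:pair} simplifies to $\tfrac{(1-t)\,t^{\skipped}}{1 - t^{\free}}$ (the wrapping factor $q^{a-r+1}$ trivializes), and what one needs is a weight-preserving bijection expressing the union of packed two-line queues as a product of a bottom-row factor against a queue system of one lower rank. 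Small examples (e.g.~$\lambda = (2)$ and $\lambda = (2,1)$ in $n=2$, for which one can compute $a^\nu_\mu(1,t)$ explicitly and verify the identity) confirm the claim and suggest the form of such a bijection; turning this into a general argument is the technical heart of the proof.
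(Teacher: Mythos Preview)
Your overall strategy is exactly the paper's: induct on $\lambda_1$, handle the packed support $S=\{1,\dots,\ell(\lambda)\}$ via Theorem~\ref{thm:recurrence}, and transport to arbitrary $S$ by applying $T_j$ when $j\in S$, $j+1\notin S$. Your Hecke computations ($T_j A_S = A_{s_j\cdot S}$, the count $t\,c_j(S)=c_{j+1}(s_j\cdot S)$, etc.) are correct, and your handling of the base case $\lambda_1=1$ and the symmetry of $B$ is fine.

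The one genuine gap is the coefficient identity
\[
\sum_{\substack{\mu\in S_n(\lambda)\\ \mu\ \text{packed}}} a^\nu_\mu(1,t)=1,
\]
which you correctly isolate as the crux but do not prove. You propose to attack it via a weight-preserving bijection on two-line queues; this is more machinery than needed. The paper's argument (Lemma~\ref{lem:sum_a_q1}) is a short direct computation: fix $\nu$ as the top row and the ball positions in the bottom row by $S$, and build the queue one pairing at a time (largest label, rightmost first). At $q=1$ the weight~\eqref{eq:pair} for a ball with $r$ free targets becomes $\tfrac{(1-t)t^{\skipped}}{1-t^r}$ with $\skipped\in\{0,1,\dots,r-1\}$, and these $r$ options sum to
\[
\frac{1-t}{1-t^r}\bigl(1+t+\cdots+t^{r-1}\bigr)=1.
\]
Iterating over all balls in the top row gives total weight $1$. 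So the identity is a one-line telescoping, not a bijection; once you add this, your proof is complete and essentially identical to the paper's.
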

The symmetric part of this theorem (\cref{eq:factorization}) was proved in more generality in \cite{Dolega2017}.
Notice also that the top homogeneous part of \cref{eq:factorization} corresponds to the factorization property of (homogeneous) Macdonald polynomials; see~\cite[Chapter VI, Eq. (4.14.vi)]{Macdonald1995}. The nonsymmetric part of the theorem (\cref{eq:partial_symmetrization}) seems however to be new.

We start by giving a formula for interpolation ASEP polynomials indexed by $\mu\in\{0,1\}^n$ (for general $q$ and $t$). 
We will use the natural bijection between compositions $\mu\in\{0,1\}^n$ and subsets of $\llbracket n\rrbracket$ given by
    $\mu\mapsto S_\mu:=\{i:\mu_i=1\}$.
\begin{lem}\label{lem:factorization_one_row}
        For any $\mu\in\{0,1\}^n$, we have
        \begin{equation}\label{eq:factorization_one_row}
          f_\mu^*(x_1,\dots,x_n;q,t)=\prod_{i\in S_\mu}\left(x_i-\frac{t^{\#S^c_\mu\cap \llbracket i-1\rrbracket}}{t^{n-1}}\right). \end{equation}
        As a consequence, 
        \begin{equation}\label{eq:P_column}
          P^*_{(1^k,0^{n-k})}(x_1,\dots,x_n;q,t)=e^*_k(x_1,\dots,x_n;t).  
        \end{equation}
\end{lem}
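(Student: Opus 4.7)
The plan is to verify the proposed formula by matching it against the characterization of $f^*_\mu$ given in \cref{thm:characterization}. Write $\lambda=(1^k,0^{n-k})$ with $k=|\mu|$, and set
\[
g(x_1,\dots,x_n):=\prod_{i\in S_\mu}\left(x_i-\frac{t^{c_i}}{t^{n-1}}\right),\qquad c_i:=\#(S^c_\mu\cap\llbracket i-1\rrbracket).
\]
The polynomial $g$ has degree $k$ and its top homogeneous part is the single monomial $x^\mu$. Since every composition $\tau\in S_n(\lambda)$ has $x^\tau$ squarefree of degree $k$ with support $S_\tau$, we immediately read off $[x^\tau]g=\delta_{S_\tau,S_\mu}=\delta_{\tau,\mu}$. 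The serious step is therefore the vanishing condition: $g(\widetilde\nu)=0$ for every $\nu\in \NN^n$ with $|\nu|\leq k$ and $\nu\notin S_n(\lambda)$.

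For such $\nu$, let $Z(\nu)=\{j:\nu_j=0\}$. Since $\nu$ either has $|\nu|<k$ or has $|\nu|=k$ with some $\nu_j\geq 2$, in either case the support of $\nu$ has size at most $k-1$, so $|Z(\nu)|\geq n-k+1>|S^c_\mu|$. The main combinatorial idea is then to compare the running counts $a(i):=\#(Z(\nu)\cap \llbracket i\rrbracket)$ and $b(i):=\#(S^c_\mu\cap\llbracket i\rrbracket)$ on $\{0,1,\dots,n\}$: both start at $0$, have increments in $\{0,1\}$, and $a(n)>b(n)$. At the smallest index $i$ where $a(i)-b(i)$ jumps from $0$ to $1$, we automatically have $a(i-1)=b(i-1)=c_i$, the increment $a(i)-a(i-1)=1$ (i.e.\ $\nu_i=0$), and $b(i)-b(i-1)=0$ (i.e.\ $i\in S_\mu$). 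A direct computation from \eqref{eq:k} and \eqref{eq:k2} then gives $\widetilde\nu_i=t^{\#(Z(\nu)\cap\llbracket i-1\rrbracket)-(n-1)}=t^{c_i-(n-1)}$, so the $i$-th factor of $g(\widetilde\nu)$ is zero.

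I expect the delicate point to be identifying precisely which index $i\in S_\mu\cap Z(\nu)$ makes a factor vanish: a naive pigeonhole gives existence of some such $i$, but an arbitrary one need not make the exponent of $t$ in $\widetilde\nu_i$ equal $c_i-(n-1)$. Tracking the running difference $a(i)-b(i)$ and choosing its first upward jump is exactly what aligns the two counts. Once \cref{eq:factorization_one_row} is established, \cref{eq:P_column} follows by summing over $\mu\in S_n(\lambda)$: by \cref{prop:symmetrization}, $P^*_{(1^k,0^{n-k})}=\sum_{\mu\in S_n(\lambda)}f^*_\mu$, and the bijection $\mu\mapsto S_\mu$ between $S_n(\lambda)$ and $\binom{\llbracket n\rrbracket}{k}$ turns this sum into the defining sum for $e^*_k(x_1,\dots,x_n;t)$.
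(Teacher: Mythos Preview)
Your proof is correct and follows essentially the same approach as the paper: both verify the formula via \cref{thm:characterization}, and both locate the vanishing factor by tracking the running difference between the zero-counts of $\nu$ and of $\mu$ and picking the first upward jump. Your write-up is in fact slightly cleaner, as it makes explicit that the chosen index $i$ lies in $S_\mu$ (so that there is an $i$-th factor in the product at all), and it gets the sign of the jump right.
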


\begin{proof}
We use \cref{thm:characterization}.
For $\mu\in \{0,1\}^n$, let $g_{\mu}$ denote the right-hand side of \eqref{eq:factorization_one_row}.
It is clear from the formula for $g_{\mu}$ that the second condition of \cref{thm:characterization} holds.
To show that the first condition of \cref{thm:characterization} holds, 
consider any composition $\nu$ such that 
$|\nu|\leq |\mu|$
and $\nu\notin S_n(\lambda)$.   
Note that if $\nu_i=0$, then 
\begin{equation*}
    k_i(\nu) = \#(S_{\nu} \cap [i-1]) + (n-i) =i-1-\#(S_{\nu}^c \cap [i-1]) +(n-i) = (n-1)-\#(S_{\nu}^c \cap [i-1]),
\end{equation*}
where $k_i(\nu)$ is the statistic defined in \cref{eq:k}. 

We claim that there exists an $i$ such that $\nu_i=0$ and $\#(S_{\nu} \cap [i-1])=\#(S_{\mu} \cap [i-1])$.  If we know the claim, then 
$k_i(\nu) = (n-1)-\#(S_{\mu}^c \cap [i-1])$, so 
 $g_{\mu}(\tilde{\nu})$ is obtained by plugging in $x_i=q^{\nu_i} t^{-k_i(\nu)} = t^{\#(S_{\mu}^c \cap [i-1])-(n-1)}.$
Thus $g_{\mu}(\tilde{\nu})=0$, and the uniqueness property of \cref{thm:characterization} implies that 
$f_{\mu}^* = g_{\mu}$. 

To prove the claim, consider the function $$\phi:i\in[1,n+1]\mapsto \#(S_{\nu}^c \cap [i-1])-\#(S_{\mu}^c \cap [i-1]) \in \ZZ.$$
This function has the property that $|\phi(i+1)-\phi(i)|\in \{0,1\}$, $\phi(1)=0$, and 
(because $\nu$ has more $0$'s than $\mu$) $\phi(n+1)<0$. 
Thus  we can find $i$ such that $\phi(i)=0$ and $\phi(i+1)=-1$, which implies that $\nu_i=0$ and the claim.

Now \eqref{eq:P_column} is obtained by summing \eqref{eq:factorization_one_row} over all $\mu$ of size $k$ (see \cref{prop:symmetrization}).
\end{proof}

Given $\mu$, let $\Supp(\mu):=\{i:\mu_i>0\}.$
 The following lemma is implicit in the discussion around \cite[(5.1)-(5.3)]{AMW} when $\lambda$ has distinct parts; we give a quick sketch below.

\begin{lem}\label{lem:sum_a_q1}
Fix a partition $\lambda$ with largest part $L$, and $\nu\in S_n(\lambda\backslash 1^{m_1(\lambda)})$, where $\lambda\backslash 1^{m_1(\lambda)}$ is the partition obtained from $\lambda$ by removing parts of size 1. We also fix a set $S \in {\llbracket n\rrbracket \choose \ell(\lambda)}$.
Then 
    $$\sum_{\mu\in S_n(\lambda):\Supp(\mu)= S}a^\nu_{\mu}(q=1)=1.$$
\end{lem}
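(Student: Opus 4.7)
My plan is to unfold $a_\mu^\nu$ via its combinatorial description (\cref{def:two-row}) so that the identity becomes
$$\sum_{\mu\in S_n(\lambda),\,\Supp(\mu)=S} a_\mu^\nu(q=1)\;=\;\sum_{Q_0}\wt_{\pair}(Q_0)\big|_{q=1},$$
where the sum on the right ranges over all generalized two-line queues with top row $\nu$ and bottom row $\mu\in S_n(\lambda)$ satisfying $\Supp(\mu)=S$. I will show this sum equals $1$ by processing labels in decreasing order $a=L,L-1,\dots,2$ (the top row has no $1$'s). Set $X_a:=\{j:\nu_j=a\}$, and let $U_a\subset S$ denote the bottom positions still unpaired just before label $a$ is processed, so $U_L=S$.

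The key structural observation is a forced-triviality lemma: every $j\in X_a\cap U_a$ must be trivially paired with $\mu_j=a$. Indeed the two-line queue rule forces $\mu_j\geq\nu_j=a$, while $j\in U_a$ prevents $\mu_j$ from exceeding $a$ (otherwise the label $\mu_j>a$ would have paired $j$ in an earlier step, removing $j$ from $U_a$). Hence the non-trivially paired top-$a$ balls are exactly those in $X_a\setminus U_a$, and they must pair bijectively with positions $j'\in B_a:=U_a\setminus X_a$; every such $j'$ automatically satisfies $\nu_{j'}<a$ or $\nu_{j'}=0$ (because $\nu_{j'}>a$ would have consumed $j'$ earlier, while $\nu_{j'}=a$ is excluded since $j'\notin X_a$), so every target in $B_a$ is valid. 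A short bookkeeping check confirms that the $m_1(\lambda)$ positions remaining in $U$ after all labels $\geq 2$ have been processed all satisfy $\nu_j=0$, and therefore legitimately receive the label $1$.

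For the weight at label $a$, the caveat in \cref{def:wt0} excluding trivially paired label-$a$ bottom balls from the free set makes the free set at the start of label $a$ exactly $B_a$, shrinking by one with each non-trivial pairing. Processing the non-trivial top-$a$ balls right-to-left, each choice of partner $j'$ carries weight $\tfrac{(1-t)t^{\skipped(j,j')}}{1-t^f}$, with $f$ the current free count, and $\skipped(j,j')$ takes the values $0,1,\dots,f-1$ as $j'$ runs through the free positions in cyclic order starting from $j+1$; the identity $\sum_{k=0}^{f-1}(1-t)t^k/(1-t^f)=1$ collapses each per-ball sum to $1$. By the chain rule the label-$a$ contribution is $1$, and iterating over $a$ gives the lemma. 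The main obstacle is the forced-triviality claim above; once it is in hand the remaining pairing choices decouple into the standard per-label summation identities, mirroring the $t$-PushTASEP picture of \cite[(5.1)--(5.3)]{AMW}.
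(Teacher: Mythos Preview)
Your proof is correct and follows essentially the same approach as the paper's: both arguments process labels in decreasing order, note that top balls sitting over unpaired bottom positions are forced into trivial pairings, and then observe that at $q=1$ the weighted sum over all choices of partner for each remaining top ball telescopes to $1$ via $\sum_{k=0}^{f-1}(1-t)t^k/(1-t^f)=1$. Your write-up is more explicit about the ``forced-triviality'' step and the check that leftover bottom positions have $\nu_j=0$, but these are exactly the details the paper's sketch leaves implicit.
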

\begin{proof}
Recall from \cref{sec:2line} that the coefficients $a^\nu_\mu$ enumerate generalized 2-line queues in $\QT^\nu_\mu$ according to their pairing weights. By fixing $\nu$, we fix the labels of balls in the top row. The positions but not the labels of the balls in the bottom row are fixed by $S$.  Such a multiline queue is obtained as follows: we start with the highest label $L$ in the top row, and trivially pair any ball having a ball directly underneath it. We then pair the rightmost free ball $B$ labeled $L$ (in the top row). If there are $r$ free balls left, then $B$ will have $r$ pairing choices, with weights $\frac{1-t}{1-t^r}$,$\frac{1-t}{1-t^r}t,\dots,\frac{1-t}{1-t^r}t^{r-1}$ (recall that  $q=1$). 
Thus the total weight of all possible pairings for $B$ is 1.  We then move on to the other balls in the top row (always choosing the rightmost ball with the largest label).
Note that  $\#S=\ell(\lambda)\geq\ell(\nu)$, this guarantees that all balls in the top row are paired: this fixes the labels of the paired balls in the bottom row, the unpaired ones will be labeled by 1.
In conclusion, when $q=1$, the total weight at each step is 1, and the lemma follows.
\end{proof}

\begin{proof}[Proof of \cref{thm:factorization}]
    We prove the result by induction on the size of the first part of $\lambda$. When $\lambda_1=0$, we have 
    $$P^*_{0^n}=f^*_{0^n}=1,$$
    which corresponds in \cref{eq:factorization} to an empty product.
The result was also proven for $\lambda_1=1$ in \cref{lem:factorization_one_row}.

Now fix $\lambda$ with $\lambda_1>0$. We start by showing \cref{eq:partial_symmetrization}, i.e that for any subset 
$S \in {\llbracket n\rrbracket \choose \ell(\lambda)}$ we have
\begin{equation*}
  \sum_{\mu\in S_n(\lambda):\Supp(\mu)=S} f^*_\mu(x_1,\dots,x_n;1,t)=\prod_{i\in S}\left(x_i-\frac{t^{\#S^c\cap \llbracket i-1\rrbracket}}{t^{n-1}}\right)\prod_{2\leq i\leq \lambda_1}e^*_{\lambda'_i}(x_1,\dots,x_n;t).
\end{equation*}
This equation will be proved by induction on $\sum_{i\in S}i$. Our base case is  the ``packed subset'' $S:=\{1,\dots,\ell(\lambda)\}$. In this case, we know from \cref{thm:recurrence} that
\begin{equation*}
  \sum_{\substack{\mu\in S_n(\lambda)\\
  \Supp(\mu)=S}} f^*_\mu(x_1,\dots,x_n;1,t)
= \prod_{1\leq i\leq \ell(\lambda)}\left(x_i-\frac{1}{t^{n-1}}\right)
\sum_{\nu}
f^*_{\nu^-}(x_1,\dots,x_n;1,t)\sum_{\substack{\mu\in S_n(\lambda)\\
\Supp(\mu)=S}}a^\nu_{\mu}(q=1),
\end{equation*}
where the first sum in the right-hand side is taken over compositions $\nu\in S_n(\lambda\backslash 1^{m_1(\lambda)})$.  
We now apply \cref{lem:sum_a_q1} and \cref{prop:symmetrization}, obtaining
\begin{align*}
  \sum_{\mu\in S_n(\lambda):\Supp(\mu)=S}
  f^*_\mu(x_1,\dots,x_n;1,t)
  &=\prod_{1\leq i\leq \ell(\lambda)}\left(x_i-\frac{1}{t^{n-1}}\right)
\sum_{\nu\in S_n(\lambda\backslash 1^{m_1(\lambda)})}
f^*_{\nu^-}(x_1,\dots,x_n;1,t)\\
  &=\prod_{1\leq i\leq \ell(\lambda)}\left(x_i-\frac{1}{t^{n-1}}\right)
\sum_{\nu\in S_n(\lambda^-)}
f^*_{\nu}(x_1,\dots,x_n;1,t)\\
&=\prod_{1\leq i\leq \ell(\lambda)}\left(x_i-\frac{1}{t^{n-1}}\right)
P^*_{\lambda^-}(x_1,\dots,x_n;1,t).
\end{align*}
Applying the induction hypothesis of \cref{eq:factorization}  with $\lambda^{-}$ (because $(\lambda^-)_1=\lambda_1-1<\lambda_1$), 
we get
$$\sum_{\mu\in S_n(\lambda):\Supp(\mu)=S}f^*_\mu(x_1,\dots,x_n;1,t)=\prod_{1\leq i\leq \ell(\lambda)}\left(x_i-\frac{1}{t^{n-1}}\right)\prod_{2\leq i\leq \lambda_1}e^*_{\lambda'_i}(x_1,\dots,x_n;t).$$
This finishes the proof of the base case \cref{eq:partial_symmetrization}. 

We now fix $S$ such that \cref{eq:partial_symmetrization} holds, and let $1\leq i\leq n-1$ such that $i\in S$ but $i+1\notin S$, and let $S':=S\backslash\{i\}\cup\{i+1\}.$ We want to prove the result for $S'$. Note that the action of the transposition $s_i$ is a bijection between $\{\mu\in S_n(\lambda):\Supp(\mu)=S\}$ and $\{\kappa\in S_n(\lambda):\Supp(\kappa)=S'\}$ and for any $\mu$ in the first set, we have
$f^*_{s_i\mu}=T_if^*_{\mu}$ (see \cref{prop:T_i-f_star}). Hence,
\begin{equation*}
  \sum_{\kappa\in S_n(\lambda):\Supp(\kappa)=S'}f^*_\kappa(x_1,\dots,x_n;1,t)
  =T_i\left(\prod_{j\in S}\left(x_j-\frac{t^{\#S^c\cap \llbracket j-1\rrbracket}}{t^{n-1}}\right)\prod_{2\leq j\leq \lambda_1}e^*_{\lambda'_j}(x_1,\dots,x_n;t)\right). 
\end{equation*}
Since the functions $e^*_{\lambda'_i}$ are symmetric, we obtain 
\begin{align*}
  \sum_{\kappa\in S_n(\lambda):\Supp(\kappa)=S'}f^*_\kappa(x_1,\dots,x_n;1,t)
  &=\prod_{2\leq j\leq \lambda_1}e^*_{\lambda'_j}(x_1,\dots,x_n;t)T_i\left(\prod_{j\in S}\left(x_j-\frac{t^{\#S^c\cap \llbracket j-1\rrbracket}}{t^{n-1}}\right)\right)\\
&=\prod_{2\leq j\leq \lambda_1}e^*_{\lambda'_j}(x_1,\dots,x_n;t)\prod_{j\in S'}\left(x_j-\frac{t^{\#(S')^c\cap \llbracket j-1\rrbracket}}{t^{n-1}}\right). 
\end{align*}
To obtain the last line, we use the fact that $T_i x_i=x_{i+1}$, and $T_i t^{\#S^c\cap \llbracket i-1\rrbracket}=t\cdot  t^{\#S^c\cap \llbracket i-1\rrbracket}=t^{\#(S')^c\cap \llbracket i\rrbracket}$. 

This finishes the proof of \cref{eq:partial_symmetrization}. We now sum \cref{eq:partial_symmetrization} over all subsets  $S \in {\llbracket n\rrbracket \choose \ell(\lambda)}$, getting
\begin{equation*}
  \sum_{\mu\in S_n(\lambda)} f^*_\mu(x_1,\dots,x_n;1,t)=\sum_{S:\#S=\ell(\lambda)}\prod_{i\in S}\left(x_i-\frac{t^{\#S^c\cap \llbracket i-1\rrbracket}}{t^{n-1}}\right)\prod_{2\leq i\leq \lambda_1}e^*_{\lambda'_i}(x_1,\dots,x_n;t),
\end{equation*}
which by \cref{prop:symmetrization} is equivalent to 
\begin{align*}
  P^*_\mu(x_1,\dots,x_n;1,t)
  &=e^*_{\ell(\lambda)}(x_1,\dots,x_n;t)\prod_{2\leq i\leq \lambda_1}e^*_{\lambda'_i}(x_1,\dots,x_n;t)\\
  &=\prod_{1\leq i\leq \lambda_1}e^*_{\lambda'_i}(x_1,\dots,x_n;t),
\end{align*}
which gives \cref{eq:factorization}. This finishes the proof of the induction and hence the proof of the theorem.
\end{proof}

\bibliographystyle{amsalpha}
\bibliography{biblio.bib}
   
\end{document}